\theoremstyle{plain}
\newtheorem{thm}{Theorem}
\newtheorem{lem}[thm]{Lemma}
\newtheorem{cor}[thm]{Corollary}
\newtheorem{prop}[thm]{Proposition}
\newtheorem{conj}[thm]{Conjecture}
\theoremstyle{definition}
\newtheorem{defn}[thm]{Definition}
\newtheorem{question}[thm]{Question}
\newtheorem{rmk}[thm]{Remark}
\newtheorem{claim}[thm]{Claim}
\numberwithin{thm}{section} \numberwithin{equation}{section}
\newcommand{\eq}[2]{\begin{equation}\label{#1}#2 \end{equation}}
\newcommand{\ml}[2]{\begin{multline}\label{#1}#2 \end{multline}}
\newcommand{\ga}[2]{\begin{gather}\label{#1}#2 \end{gather}}
\newcommand{\inj}{\hookrightarrow}
\newcommand{\codim}{{\rm codim}}
\newcommand{\Pic}{{\rm Pic}}
\newcommand{\im}{{\rm im}}
\newcommand{\Spec}{{\rm Spec \,}}
\newcommand{\sC}{{\mathcal C}}
\newcommand{\sF}{{\mathcal F}}
\newcommand{\sH}{{\mathcal H}}
\newcommand{\sK}{{\mathcal K}}
\newcommand{\sKM}{{\mathcal K}^{\rm M}}
\newcommand{\sO}{{\mathcal O}}
\newcommand{\sR}{{\mathcal R}}
\newcommand{\sS}{{\mathcal S}}
\newcommand{\sX}{{\mathcal X}}
\newcommand{\C}{{\rm  C}}    %redefined for complexes
\newcommand{\CC}{{\mathbb C}}
\renewcommand{\P}{{\mathbb P}}
\newcommand{\Q}{{\mathbb Q}}
\newcommand{\Z}{{\mathbb Z}}
\newcommand{\dR}{{\rm dR}}
\newcommand{\CH}{{\rm CH}}
\newcommand{\ch}{{\rm ch}}
\newcommand{\rKM}{{K}^{\rm M}}
\newcommand{\cl}{{\rm cl}}
\newcommand{\rHC}{{\rm HC}}
\begin{document}

\title[Deformation of cycle classes]{Deformation of algebraic cycle classes in
  characteristic zero}
\author{Spencer Bloch}
\author{H{\'e}l{\`e}ne Esnault}
\author{Moritz Kerz}
\address{5765 S. Blackstone Ave., Chicago, IL 60637,
USA}
\address{ FU Berlin, Mathematik, Arnimallee 3, 14195 Berlin, Germany}
\address{Fakult\"at f\"ur Mathematik, Universit\"at Regensburg, 93040 Regensburg, Germany}
\email{spencer\_bloch@yahoo.com }
\email{esnault@math.fu-berlin.de}
\email{moritz.kerz@mathematik.uni-regensburg.de}
\thanks{The second author is supported by  the  Einstein Foundation,
the ERC Advanced Grant 226257,  the third author by the DFG
Emmy-Noether Nachwuchsgruppe ``Arithmetik \"uber endlich erzeugten K\"orpern''
.}
\begin{abstract} 
We study a formal deformation problem for rational algebraic cycle classes motivated by
Grothendieck's variational Hodge conjecture. We argue that there is a close connection
between the existence of a Chow-K\"unneth decomposition and the existence of expected
deformations of cycles. This observation applies in particular to abelian schemes.
\end{abstract}

%\subjclass{Primary Algebraic Geometry}
\maketitle
\begin{quote}

\end{quote}
\section{Introduction}

\noindent
Let $K$ be a field of characteristic $0$.
Let $Z/K$ be a smooth projective variety of dimension $d$ and let $i$ be an integer. The following Chow-K\"unneth property is 
expected to hold in general as part of conjectures of Grothendieck, Beilinson and Murre, see \cite[Sec.~5]{Ja}.

\medskip

${\bf (CK)}_Z^i \quad$: There is an idempotent correspondence $\pi^i\in \CH^d(Z\times_K Z)_\Q $
such that on $ H^*_\dR(Z/K)$ the correspondence $\pi^i$ acts
as the projection to $H^i_\dR(Z/K)$.

\begin{rmk}\label{int.rmkab}
An important and well-understood example is the case of an abelian variety $A/K$. In this case we know $ {\bf
  (CK)}_A^i$ for all integers $i$, see for instance~\cite{DM}.
\end{rmk}

\medskip

Set $S=\Spec k[[t]]$, where $k$ is a field of characteristic $0$.
Let $X/S$ be a smooth projective scheme. Let $\eta$ be the generic point of $S$,
$K=k(\eta)$. Denote by $X_n$ the scheme $X\times_S S_n$ with $S_n=\Spec k[t]/(t^n)$.

There is a Chern character ring homomorphism to de Rham cohomology 
\begin{equation}
\ch: K_0(X_1) \to H^*_\dR(X_1/k).
\end{equation}
It is an important problem to determine the image of the Chern character. In this note we
consider an `infinitesimal' analogue of this problem along the thickening $X_1 \hookrightarrow
X_n$.

By $\nabla : H^*_\dR(X/S) \to  H^*_\dR(X/S)$ we denote derivation along the parameter $t$
with respect to the Gauss-Manin connection and by
 $ H^*_\dR(X/S)^\nabla$ the kernel of~$\nabla$.
Solving a formal differential equation we see that the canonical map
\begin{equation}\label{int.solv}
\Phi: H^*_\dR(X/S)^\nabla \xrightarrow{\sim} H^*_\dR(X_1/k)
\end{equation}
is an isomorphism \cite[Prop.~8.9]{Katz}. 

We denote by $F^r H^i_\dR \subset H^i_\dR  $ the Hodge filtration on de Rham cohomology (of
smooth projective schemes). Our main theorem says:

\begin{thm}\label{main.thm}
Assume that for the scheme $X/S$ as above the property ${\bf (CK)}_{X_\eta}^i$ holds for all
even $i\in \Z$. Then for $\xi_1\in K_0(X_1)_\Q$ the following are equivalent:
\begin{itemize}
\item[(i)] $\Phi^{-1} \circ \ch(\xi_1) \in  \bigoplus_i   H^{2i}_{\dR}(X/S)^\nabla \cap F^i H^{2i}_\dR(X/S)$,
\item[(ii)] there is an element $\hat \xi \in (\varprojlim_n K_0(X_n) )\otimes \Q $ such
  that $$\ch(\hat \xi |_{X_1}) = \ch (\xi_1) \in H_\dR^*(X_1/k).$$
\end{itemize}
\end{thm}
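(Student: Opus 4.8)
The plan is to prove the two implications separately, with the direction $(ii)\Rightarrow(i)$ being the easier compatibility check and $(i)\Rightarrow(ii)$ being the substantive construction.

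The plan is to establish the two implications separately. The direction (ii)$\Rightarrow$(i) is essentially a compatibility statement for the Chern character with the Gauss--Manin connection and the Hodge filtration, and should follow from standard properties of de Rham characteristic classes. The direction (i)$\Rightarrow$(ii) is the substantive one: it requires the deformation theory of $K_0$-classes along the nilpotent thickenings $X_n$, and it is here that the hypothesis ${\bf (CK)}_{X_\eta}^i$ will be used. Throughout, the bridge between the two sides is the Chern character from continuous $K$-theory $(\varprojlim_n K_0(X_n))\otimes\Q$ into a continuous de Rham/cyclic-homology theory, whose image I must describe.

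For (ii)$\Rightarrow$(i): assume $\xi_1$ lifts to a compatible system $\hat\xi\in(\varprojlim_n K_0(X_n))\otimes\Q$. I would assemble the classes $\ch(\hat\xi|_{X_n})$ into a compatible system and compare it with the relative theory $H^*_\dR(X/S)$, using that for $X/S$ smooth projective the latter is a finite free $t$-adically complete $k[[t]]$-module. Two facts then need checking. First, a class that extends over the whole formal thickening is flat for the Gauss--Manin connection, so the resulting element lies in $H^*_\dR(X/S)^\nabla$ and corresponds under $\Phi$ to $\ch(\xi_1)$. Second, the de Rham Chern character of an algebraic class respects the Hodge filtration, so its degree-$2i$ component lies in $F^iH^{2i}_\dR$. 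Together these give (i); the only care required is the bookkeeping identifying the continuous de Rham cohomology of the $X_n$ with $H^*_\dR(X/S)$ and its interaction with $\Phi$.

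For (i)$\Rightarrow$(ii): I would build $\hat\xi$ by successively lifting along the square-zero thickenings $X_n\hookrightarrow X_{n+1}$, whose ideal is $(t^n)/(t^{n+1})$. Rationally the obstruction theory is governed by cyclic homology: by Goodwillie's theorem the relative Chern character identifies $K_*(X_{n+1},X_n)_\Q$ with relative negative cyclic homology $\rHC^{-}_*(X_{n+1},X_n)_\Q$, and since relative periodic cyclic homology of a nilpotent extension vanishes, the $SBI$-sequence further identifies this with relative cyclic homology $\rHC_{*-1}(X_{n+1},X_n)_\Q$. The obstruction to lifting $\xi_n$ from $K_0(X_n)_\Q$ is then its image in the relative group occurring in the long exact sequence $\cdots\to K_0(X_{n+1})_\Q\to K_0(X_n)_\Q\to K_{-1}(X_{n+1},X_n)_\Q\to\cdots$. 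Via the Hochschild--Kostant--Rosenberg decomposition in characteristic zero these relative groups are expressed through $H^*_\dR(X/S)$ and its Hodge filtration, with the connecting maps of the tower translating into multiplication by $t$ together with the Gauss--Manin derivative $\nabla$. In these terms condition (i) — horizontality and membership in $F^i$ — is precisely what forces each successive obstruction to vanish, so that a lift exists at every stage.

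The main obstacle, and the point where ${\bf (CK)}_{X_\eta}^i$ is indispensable, is twofold. The Chern character from continuous $K$-theory to continuous cyclic homology is not a priori surjective, so the given horizontal Hodge class must be realized by an actual $K$-theory class; and the successive lifts must be chosen compatibly so as to assemble into an element of the inverse limit, which requires controlling the $\varprojlim^1$ term (a Mittag--Leffler condition). The Chow--K\"unneth decomposition supplies algebraic correspondences $\pi^{2i}$ which act on $K_0$ by pullback, composition and pushforward, and which realize the cohomological K\"unneth projectors. This lets me split the lifting problem according to the even cohomological degrees $2i$ in (i), match the $F^i$-condition degree by degree, and transport the cohomological splitting to the level of $K$-theory, so that the abstract horizontal Hodge class lies in the image of the Chern character and the stepwise lifts organize into a genuine pro-system. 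Carrying out this transfer uniformly in $n$ and verifying the requisite Mittag--Leffler condition is the technical heart of the argument.
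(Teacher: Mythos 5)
Your skeleton --- Goodwillie's theorem plus a pro-HKR statement to control relative $K$-theory, successive lifting along the square-zero extensions, and Chow--K\"unneth projectors to repair the lifting problem --- is genuinely the same architecture as the paper's, and your direction (ii)$\Rightarrow$(i) is fine. But there is a real gap at the decisive step, namely \emph{where the obstruction lives}. Algebraic $K$-theory is absolute, so Goodwillie identifies $K_*(X_{n+1},X_n)_\Q$ with relative cyclic homology over $\Q$, not over $k$ or $S$; concretely, the obstruction to lifting a class from $\CH^i(X_n)_\Q$ to $\CH^i(X_{n+1})_\Q$ lies in $H^{i+1}(X_1,\Omega^{i-1}_{X_1})$, the cohomology of \emph{absolute} K\"ahler differentials. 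Your claim that these relative groups ``are expressed through $H^*_\dR(X/S)$ and its Hodge filtration'' and that condition (i) ``is precisely what forces each successive obstruction to vanish'' is false for general $k$: condition (i), via the Kodaira--Spencer comparison (Lemma~\ref{def.lem1}), only shows that the obstruction dies in $H^{i+1}(X_1,\Omega^{i-1}_{X_1/k})$, i.e.\ modulo the subgroup $K=\ker[H^{i+1}(X_1,\Omega^{i-1}_{X_1})\to H^{i+1}(X_1,\Omega^{i-1}_{X_1/k})]$ built from $\Omega^*_{k/\Q}$. When $k$ is algebraic over $\Q$ this kernel vanishes and your argument would essentially work --- that is exactly Remark~\ref{intro.rmkalg} --- but for $k=\CC$, say, it does not, and Appendix~\ref{counterex} shows the phenomenon is not an artifact.

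Correspondingly, you have misplaced the role of ${\bf (CK)}^{2i}_{X_\eta}$. It is not needed for surjectivity of the Chern character from continuous $K$-theory: the pro-isomorphism $K_0(X_\bullet)_\Q\cong\bigoplus_r \CH^r(X_\bullet)_\Q$ (Theorem~\ref{com.mainthm}) is unconditional, proved from Goodwillie, pro-HKR and Zariski descent. Nor is it needed for a Mittag--Leffler/$\varprojlim^1$ argument, which never appears: compatibility of the successive lifts is arranged by lifting the K\"unneth projector to an inverse system of \emph{idempotent} correspondences $\pi^{2i}_n\in\CH^{d}(X_n\times_{S_n}X_n)_\Q$ (Proposition~\ref{tr.fundprop}, using an idempotent-lifting trick over the nilpotent kernel) and setting $\hat\xi_n=\pi^{2i}_n\cdot\hat\xi_n'$, so that $\pi$-invariance propagates through the induction; the only lim-type issue is passing from $\varprojlim(\CH^i(X_n)_\Q)$ to $(\varprojlim\CH^i(X_n))\otimes\Q$, which is a diagram chase (Claim~\ref{chow.claim1}). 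The hypothesis is used precisely to kill the absolute part $K$ of the obstruction: Proposition~\ref{tr.vanlem} shows that $(\pi^{2i})^{i-1}$ annihilates $K$, and since the obstruction map commutes with correspondences and $\hat\xi_{n-1}$ is $\pi$-invariant, one gets ${\rm Ob}(\hat\xi_{n-1})=(\pi^{2i}_1)^{i-1}\cdot{\rm Ob}(\hat\xi_{n-1})=0$. Note finally that even letting correspondences act on $\CH^*(X_n)$ is nontrivial, since the $X_n$ are non-reduced: one must build a transfer calculus by gluing Rost's transfer for Milnor $K$-sheaves with the Grothendieck--Hartshorne trace on absolute differential forms, after proving the relevant sheaves are Cohen--Macaulay; your proposal presupposes this, while the paper treats it as its central new ingredient.
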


A preliminary version  of Theorem~\ref{main.thm} for cohomological Chow groups is shown in Section~\ref{Chow}, see
Theorem~\ref{main.thmch}. The central new ingredient of our proof is to study a
ring of correspondences for the non-reduced scheme $X_n$.  Property ${\bf
  (CK)}_{X_\eta}^i$ will guarantee that there are enough such correspondences in order to
kill the influence of absolute differential forms of $k$ on the deformation behavior. In
fact for $\Omega^1_{k/\Q}=0$ the whole deformation problem is much easier, see Remark~\ref{intro.rmkalg}.
The proof of Theorem~\ref{main.thm} is  completed via a
Chern character isomorphism relying on Zariski descent for algebraic $K$-theory, see Section~\ref{motcom}.

\begin{rmk}\label{intro.rmkalg}
In case $k$ is algebraic over $\Q$ we also deduce without assuming ${\bf (CK)}_{X_\eta}^i$
that conditions (i) and (ii) are equivalent and they are also equivalent to:
\begin{itemize}
\item[(ii')] there is an element $\hat \xi \in (\varprojlim_n K_0(X_n) )\otimes \Q $ such
  that $\hat \xi |_{X_1} = \xi_1$.
\end{itemize}
See related work  \cite{GG}, \cite{PaRa}, \cite{Mor}. However, our methods do not show
that for general fields $k$ condition (ii') is equivalent to (ii) and we do not see a good
reason to expect this.
\end{rmk}

Theorem~\ref{main.thm} is motivated by a conjecture of Grothendieck \cite[p.~103]{GdR}, which is today
called the variational Hodge conjecture. See Appendix~\ref{groth.conj} for his
original global formulation. In this appendix it is shown that the latter is equivalent to the
following ``infinitesimal'' conjecture.

\begin{conj}[Infinitesimal Hodge]\label{int.conj}
Statement {\rm (i)}  of Theorem \ref{main.thm}  is equivalent
to
\begin{itemize}
\item[(iii)] there is an element $ \xi \in K_0(X)_\Q $ such
  that $$\ch( \xi |_{X_1}) = \ch (\xi_1) \in H_\dR^*(X_1/k).$$
\end{itemize}
\end{conj}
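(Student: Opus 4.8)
The plan is to prove the two implications of the equivalence separately, routing everything through condition (ii) of Theorem~\ref{main.thm} and isolating the genuinely hard point as a continuity statement for algebraic $K$-theory. Since an element $\xi\in K_0(X)_\Q$ as in (iii) lives over all of $S=\Spec k[[t]]$, restricting it to the thickenings $X_n$ yields a compatible system, hence an element of $(\varprojlim_n K_0(X_n))\otimes\Q$ whose restriction to $X_1$ has Chern character $\ch(\xi_1)$; thus (iii)$\Rightarrow$(ii) is formal. Assuming ${\bf (CK)}_{X_\eta}^i$ (e.g.\ for abelian schemes, Remark~\ref{int.rmkab}) so that Theorem~\ref{main.thm} applies, (i) and (ii) are equivalent. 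The whole conjecture therefore reduces to the single implication (ii)$\Rightarrow$(iii): every formal $K$-theory class must be algebraized to an honest class over $\Spec k[[t]]$.

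To see (iii)$\Rightarrow$(i) directly, without invoking (CK), I would argue by Hodge theory. A class $\xi$ defined over the base $S$ has relative Chern character $\ch(\xi)\in H^*_\dR(X/S)$ which is horizontal, $\nabla\ch(\xi)=0$, because it extends to a global section of the family, and which lies in the Hodge filtration $F^\bullet$ since Chern characters of algebraic classes are of Hodge type. Under the isomorphism $\Phi$ of~(\ref{int.solv}) this horizontal class corresponds to $\ch(\xi)|_{X_1}=\ch(\xi_1)$, so $\Phi^{-1}\ch(\xi_1)=\ch(\xi)$ indeed lands in $\bigoplus_i H^{2i}_\dR(X/S)^\nabla\cap F^i H^{2i}_\dR(X/S)$, which is (i). This both confirms the easy direction and shows that the horizontality and Hodge conditions are automatic consequences of algebraicity.

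The hard direction, and the step I expect to be the main obstacle, is the algebraization (ii)$\Rightarrow$(iii). Given $\hat\xi\in(\varprojlim_n K_0(X_n))\otimes\Q$ one must produce $\xi\in K_0(X)_\Q$ restricting to $\hat\xi_n$ for all $n$. One would like to invoke formal GAGA: since $X$ is proper over the complete local ring $k[[t]]$, Grothendieck's existence theorem identifies coherent sheaves on $X$ with coherent sheaves on the formal completion. The difficulty is that an element of $\varprojlim_n K_0(X_n)$ is a compatible system of $K_0$-\emph{classes}, not of coherent sheaves, and such a system need not lift to a compatible system of sheaves; equivalently one needs the rational surjectivity of $K_0(X)_\Q\to(\varprojlim_n K_0(X_n))\otimes\Q$, a continuity property of algebraic $K$-theory along the thickenings that does not hold in general and whose failure is measured by terms $\varprojlim^1_n K_1(X_n)$. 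This is exactly where the depth of the problem sits, and the reason the statement remains conjectural: algebraic $K$-theory does not satisfy formal GAGA.

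Finally, to connect this with Grothendieck's global variational Hodge conjecture (the equivalence established in Appendix~\ref{groth.conj}), I would spread the formal family out to a smooth finite-type base $S'$ over $k$, so that $X/\Spec k[[t]]$ is the completion of a family at a closed point $s_0$. Under this dictionary condition (i) over $k[[t]]$ matches Grothendieck's requirement that a Hodge class stay of type $(i,i)$ along the family, while Artin approximation converts an algebraic class over the completion into one over an \'etale neighborhood of $s_0$, i.e.\ a global cycle as in Grothendieck's formulation. The care needed here is in matching the two formality/Hodge conditions and in reducing a possibly higher-dimensional base to the one-parameter situation, but the essential obstacle remains the algebraization of the previous paragraph.
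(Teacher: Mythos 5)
You did not prove the statement, and indeed you could not have: it is a conjecture, and the paper offers no proof of it either. What you did correctly is reproduce the paper's surrounding analysis: your (iii)$\Rightarrow$(ii) (restricting along the thickenings) and (iii)$\Rightarrow$(i) (horizontality and Hodge type of the relative Chern character, transported through $\Phi$ of \eqref{int.solv}) match the paper's remark that (iii)$\Rightarrow$(ii)$\Rightarrow$(i) holds without any ${\bf (CK)}$ hypothesis, and your use of Theorem~\ref{main.thm} to identify (i) with (ii) under ${\bf (CK)}^{2i}_{X_\eta}$ is exactly how the paper frames the problem for abelian schemes.

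The genuine gap is in your reduction of (ii)$\Rightarrow$(iii). You reduce the conjecture to the rational surjectivity of the map $K_0(X)\to\varprojlim_n K_0(X_n)$ of \eqref{intr.algmap}, and you attribute its possible failure to $\varprojlim^1_n K_1(X_n)$ terms. Both points are wrong. The paper proves in Appendix~\ref{counterex} (Proposition~\ref{prop:ex}) that this map is \emph{not} rationally surjective, already for the trivial deformation of an abelian surface over $\CC[[t]]$; so your reduction lands on a provably false statement. Moreover the obstruction exhibited there is not a $\varprojlim^1$ phenomenon but a transcendence phenomenon: one builds a pro-system of $0$-cycles whose second Chern character in absolute Hodge cohomology has coefficient forms $\tau_p\in\Omega^2_\CC$ whose weights $w(\tau_p)$ grow faster than the bound $w(\tau_p)\le N\binom{p+2}{p}$ forced on any class coming from $\Omega^2_{\CC[[t]]}$ (Lemmas~\ref{lem1b}--\ref{lem3c} and \ref{lem4}). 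The conjecture therefore does \emph{not} reduce to surjectivity of \eqref{intr.algmap}: condition (iii) only asks for some $\xi\in K_0(X)_\Q$ whose Chern character agrees with $\ch(\xi_1)$ on $X_1$, not for a lift of the entire pro-system $\hat\xi$, and any viable strategy must first modify $\hat\xi$ to kill the obstruction --- this is precisely the point of Question~\ref{main.ques}, where one cuts down by the kernel of $\Psi^{\ell^2}-[\ell]^*$ on an abelian scheme before asking for surjectivity, which by the corollary following it would suffice for the Hodge conjecture for abelian varieties. Finally, your last paragraph sketches the content of Proposition~\ref{ap1.prop} in Appendix~\ref{groth.conj}, but the actual equivalence requires more than spreading out and Artin approximation: the paper needs Deligne's \emph{partie fixe} and the Cattani--Deligne--Kaplan theorem on the locus of Hodge classes to get the global de Rham lift, and an induction on $\dim\sS$ with a transfer argument to reduce to a one-parameter base.
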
 

One shows directly that (iii) $\Rightarrow$ (ii) $\Rightarrow$ (i) without assuming ${\bf (CK)}_{X_\eta}^i$
for $i$ even.  Conjecture~\ref{int.conj} is particularly interesting for abelian schemes. Indeed it is known \cite{Ab},\cite[Sec.~6]{An} that
\[
\text{Conj. \ref{int.conj} for abelian schemes } X/S  \Longrightarrow \text{ Hodge
  conj. for abelian varieties.}
\]
 
As ${\bf (CK)}_{X_\eta}^i$ is known for abelian varieties, see Remark~\ref{int.rmkab},
one can speculate about what is needed to deduce Conjecture~\ref{int.conj} for abelian schemes from
Theorem~\ref{main.thm}. In order to accomplish this one would have to
solve an algebraization problem, namely one has to consider the question how far the map
\eq{intr.algmap}{ K_0(X) \to \varprojlim_n K_0(X_n) }
is from being surjective (after tensoring with $\Q$). Recall that for line bundles the corresponding map
\[
\Pic(X) \to  \varprojlim_n \Pic(X_n)
\]
is an isomorphism by formal existence~\cite[III.5]{EGA3}.

 By considering a trivial deformation of an abelian surface we show in
Appendix~\ref{counterex}
that the map \eqref{intr.algmap} cannot be surjective in general.
For abelian schemes the counterexample leaves the following algebraization question open.

\begin{question}\label{main.ques}
Let $X/S$ be an abelian scheme and  $\ell>1$ an integer. Is the map
\[K_0(X)^{\Psi^{\ell^2}-[\ell]^*}  \to  \varprojlim_n  K_0(X_n)^{\Psi^{\ell^2}-[\ell]^*}    \]
surjective after tensoring with $\Q$?
\end{question}

Here $\psi^\ell$ is the $\ell$-th Adams operation \cite{FL} and $[\ell]:X \to X $ is multiplication
by $\ell$. The upper index notation means that we take the kernel of the corresponding endomorphism.

From Theorem~\ref{main.thm} we deduce:

\begin{cor}
A positive answer to Question~\ref{main.ques} would imply the Hodge conjecture for abelian varieties.
\end{cor}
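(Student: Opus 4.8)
The plan is to deduce the Hodge conjecture for abelian varieties by verifying Conjecture~\ref{int.conj} for abelian schemes and then invoking the implication \cite{Ab},\cite[Sec.~6]{An}. Since ${\bf (CK)}_{X_\eta}^i$ holds for all $i$ when $X_\eta$ is abelian (Remark~\ref{int.rmkab}), Theorem~\ref{main.thm} applies and gives the equivalence of (i) and (ii). As (iii)$\Rightarrow$(ii)$\Rightarrow$(i) holds unconditionally, it remains only to prove (i)$\Rightarrow$(iii). So I start from $\xi_1 \in K_0(X_1)_\Q$ satisfying (i), use Theorem~\ref{main.thm} to obtain $\hat\xi \in (\varprojlim_n K_0(X_n))\otimes\Q$ with $\ch(\hat\xi|_{X_1}) = \ch(\xi_1)$, and then manufacture from $\hat\xi$ an honest algebraic class as in (iii) with the help of Question~\ref{main.ques}.

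The central observation is that the operator $P = \Psi^{\ell^2} - [\ell]^*$ annihilates algebraic de Rham cohomology through the Chern character: the Adams operation $\Psi^{\ell^2}$ scales the degree-$i$ component $\ch_i$ (landing in $H^{2i}_\dR$) by $(\ell^2)^i = \ell^{2i}$, while $[\ell]^*$ scales $H^{2i}_\dR$ of an abelian fiber by $\ell^{2i}$; by functoriality of $\ch$ this yields $\ch_i(P\,\zeta)=0$ for every $\zeta$ and every $i$. Next I would use the Beauville eigenspace decomposition of $[\ell]^*$, according to which $P$ acts on $K_0(X_n)_\Q$ semisimply with eigenvalues of the form $\ell^{2p-s}(\ell^{s}-1)$, vanishing precisely when $s=0$. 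Choosing a polynomial $q$ that annihilates the nonzero eigenvalues with $q(0)\ne 0$, the element $\pi_0 = q(P)/q(0)$ is a projector onto $\ker P$ that commutes with the Adams operations, with $[\ell]^*$, with the restriction maps $X_1\hookrightarrow X_n$ and with the transition maps $X_n\hookrightarrow X_{n+1}$; it therefore descends to $\varprojlim_n K_0(X_n)_\Q$.

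With this in hand I set $\eta = \pi_0(\hat\xi)$, which lies in $\varprojlim_n K_0(X_n)^{\Psi^{\ell^2}-[\ell]^*}\otimes\Q$. Because $\ch(P\,\zeta)=0$, only the constant term $q(0)$ survives under $\ch$, so $\ch(\eta|_{X_1}) = \ch(\hat\xi|_{X_1}) = \ch(\xi_1)$; here one uses that (i) forces $\ch(\xi_1)$ to be concentrated in even degrees, on which $[\ell]^*$ acts as the pure scalar $\ell^{2i}$ and hence $\pi_0$ acts as the identity on cohomology. A positive answer to Question~\ref{main.ques} now provides $\xi \in K_0(X)^{\Psi^{\ell^2}-[\ell]^*}\otimes\Q$ mapping to $\eta$, whence $\ch(\xi|_{X_1}) = \ch(\eta|_{X_1}) = \ch(\xi_1)$. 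This is exactly statement (iii), completing the proof of Conjecture~\ref{int.conj} for abelian schemes and hence the Corollary.

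The step I expect to be the main obstacle is the construction of the projector $\pi_0$ over the non-reduced bases, i.e.\ establishing a Beauville-type semisimple decomposition of $P$ on $K_0(X_n)_\Q$ uniformly in $n$ and checking its compatibility with all transition and restriction maps. On the reduced fiber this is classical, but on the thickenings $X_n$ it should be handled through the ring of correspondences for $X_n$ introduced in the proof of Theorem~\ref{main.thm}, which is precisely what supplies enough idempotents to carry out the decomposition.
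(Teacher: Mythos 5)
Your skeleton is exactly the deduction the paper intends (the Corollary is stated with no separate proof, as an immediate consequence): reduce via \cite{Ab}, \cite[Sec.~6]{An} to Conjecture~\ref{int.conj} for abelian schemes, note that ${\bf (CK)}_{X_\eta}^{i}$ holds by Remark~\ref{int.rmkab}, so Theorem~\ref{main.thm} gives (i)$\Leftrightarrow$(ii), and use Question~\ref{main.ques} to bridge (ii) to (iii). Your mechanism is also the right one: $\ch\circ P=0$ for $P=\Psi^{\ell^2}-[\ell]^*$, because $[\ell]^*$ acts by $\ell^{2i}$ on $H^{2i}_\dR(X_n/S_n)=\bigwedge^{2i}H^1_\dR(X_n/S_n)$, and this identity alone forces $\ch$ to be unchanged by any operator of the form $q(P)/q(0)$.

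The gap is the one you flag yourself, and it should be stated sharply because your key input is both unavailable in the literature and slightly stronger than what is true. Beauville's (or Deninger--Murre's) semisimplicity of $[\ell]^*$ is proved over a field, resp.\ a smooth base, not over the artinian non-reduced $S_n$. Worse, on $K_0(X_n)_\Q$ the operator $P$ may a priori have nontrivial Jordan blocks at \emph{nonzero} eigenvalues: filtering $\CH^p(X_n)_\Q$ by the kernels of restriction to the $X_m$, the same nonzero eigenvalue can occur on different graded pieces (e.g.\ $\ell^{2p}-\ell^{2p-1}$ occurs both on the Beauville piece $s=1$ of $\CH^p(X_1)_\Q$ and on the $s=0$ part of the relative piece), so a $q$ with simple roots need not satisfy $P\,q(P)=0$, and then $q(P)/q(0)$ is not a projector. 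What is actually needed is weaker and \emph{is} provable with the paper's tools: one only needs that $P$ is killed on $\CH^p(X_n)_\Q$ by a polynomial $x\,q_n(x)$ with $q_n(0)\neq 0$ (repeated roots allowed, $q_n$ depending on $n$); then $q_n(P)/q_n(0)$ is the projector onto $\ker P$ along $\ker q_n(P)$, and choosing $q_n\mid q_{n+1}$ makes these projectors compatible with the transition maps, since they are polynomials in $P$. To prove the annihilation statement, pass to the Chow side by Theorem~\ref{com.mainthm} (using the standard compatibility of the Chern character with Adams operations, so that $\Psi^{\ell^2}=\ell^{2p}$ on the weight-$p$ piece), let $[\ell]^*$ act on the long exact sequences attached to \eqref{sec1.shortex} via Proposition~\ref{tr.action}(ii), apply Beauville's theorem over $k$ to the bottom graded piece (a subgroup of $\CH^p(X_1)_\Q$), and observe that all remaining graded pieces are subquotients of $H^p(X_1,\Omega^{p-1}_{X_1})$, on which the $L$-filtration argument of Proposition~\ref{tr.vanlem} shows that $[\ell]^*$ has eigenvalues among $\ell^{2p-1-s}$, $0\le s\le p-1$, all distinct from $\ell^{2p}$; hence $P$ is invertible on those pieces and the factor $x$ occurs exactly once.

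Two smaller bookkeeping points. First, your parenthetical that (i) forces $\ch(\xi_1)$ to be concentrated in even degrees is unnecessary: since $q_n(P)-q_n(0)$ is divisible by $P$ and $\ch\circ P=0$, one gets $\ch(\pi_0(\hat\xi)|_{X_1})=\ch(\hat\xi|_{X_1})$ with no further input. Second, Question~\ref{main.ques} asks for surjectivity onto $(\varprojlim_n K_0(X_n)^{P})\otimes\Q$, whereas your element $\pi_0(\hat\xi)$ naturally lives in $\varprojlim_n\bigl(K_0(X_n)_\Q\bigr)^{P}$; before invoking a positive answer you need a surjectivity statement in the style of Claim~\ref{chow.claim1} for the $P$-kernels, identifying (up to surjection) the integral inverse limit tensored with $\Q$ with the inverse limit of the rational kernels. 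Neither point is fatal, but both belong in a complete proof.
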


\bigskip

\noindent
{\it Acknowledgment:} 
We would like to thank A.~Beilinson for many comments on our work on deformation of
cycles.

\section{ Milnor $K$-theory and differential forms}  \label{MilnorK}

\noindent
Let $k$ be a field of characteristic $0$ and write $S_n = \Spec k[t]/(t^{n})$. Let
$S=\Spec k[[t]]$, and let $X \to S$ be a smooth, separated scheme of finite type. 
Write $X_n = X\times_S S_n$. Write $\Omega^r_{X_n}$, $Z^r_{X_n}$ resp.\ $B^r_{X_n}$ for
the Zariski sheaf of absolute $n$-forms, closed absolute $n$-forms resp.\ exact
absolute $n$-forms on $X_n$. Let $\sKM_* $ be the Milnor $K$-sheaf with respect to Zariski
topology as studied in \cite{Ke}.

\begin{lem}\label{lem1} There is an exact sequence of Zariski sheaves
\eq{}{0 \to \Omega^{r-1}_{X_1} \xrightarrow{a} Z^r_{X_n} \xrightarrow{f} Z^r_{X_{n-1}}
}
where $a(\eta) = t^{n-1}d\eta + (n-1) t^{n-2}dt\wedge\eta$. 
\end{lem}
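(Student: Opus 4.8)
The plan is to reduce the statement to an explicit local computation in the absolute de Rham complex of $X_n$, after first giving the map $a$ a conceptual description. For a local lift $\tilde\eta\in\Omega^{r-1}_{X_n}$ of $\eta$ one has
\[
d(t^{n-1}\tilde\eta)=(n-1)t^{n-2}\,dt\wedge\tilde\eta+t^{n-1}d\tilde\eta=a(\eta).
\]
Since $t^{n-1}$ annihilates the kernel of $\Omega^{r-1}_{X_n}\to\Omega^{r-1}_{X_1}$ (which is generated by $t\cdot(-)$ and $dt\wedge(-)$, both killed after multiplication by $t^{n-1}$, using $t^{n-1}\,dt=0$), this is independent of the lift, so $a$ is a well-defined map of Zariski sheaves landing in exact, hence closed, forms. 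In particular $f\circ a=0$ is immediate, because $t^{n-1}$ restricts to $0$ on $X_{n-1}$.

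For the rest I would work Zariski-locally, where $X\to S$ admits coordinates $x_1,\dots,x_d$ with $dx_i$ a basis of $\Omega^1_{X_n/S_n}$ and the cotangent sequence splits. The central structural input is the decomposition of the absolute forms by ``$dt$-degree'': using the relation $t^{n-1}\,dt=0$ coming from $d(t^n)=0$ (with $n$ invertible), one obtains
\[
\Omega^r_{X_n}\;\cong\;\Omega^r_{\mathrm h,n}\ \oplus\ dt\wedge\Omega^{r-1}_{\mathrm h,n-1},
\]
where $\Omega^\bullet_{\mathrm h,m}$ denotes the ``horizontal'' forms (no $dt$, built from $\Omega^1_{k/\Q}$ and the $dx_i$) with coefficients in $\sO_{X_m}$; the truncation from $n$ to $n-1$ in the $dt$-summand is forced by $t^{n-1}\,dt=0$. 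Writing $d=d_{\mathrm h}+dt\wedge\partial_t$ accordingly, a form $\omega=\alpha+dt\wedge\beta$ is closed precisely when $d_{\mathrm h}\alpha=0$ and $\partial_t\alpha=d_{\mathrm h}\beta$.

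With this in hand the two remaining points are short. For injectivity, the $dt$-component of $a(\eta)$ is $(n-1)t^{n-2}\,dt\wedge\tilde\eta$; since $n-1$ is invertible and multiplication by $t^{n-2}$ is injective on horizontal forms on $X_1$ (as $\sO_X$ is $t$-torsion free), $a(\eta)=0$ forces $\eta=0$. For exactness in the middle, take a closed $\omega=\alpha+dt\wedge\beta$ with $f(\omega)=0$; then $\alpha=t^{n-1}\alpha_0$ and $\beta=t^{n-2}\beta_0$ for horizontal forms $\alpha_0,\beta_0$ on $X_1$, and the closedness relation $\partial_t\alpha=d_{\mathrm h}\beta$ becomes $(n-1)\alpha_0=d_{\mathrm h}\beta_0$ after cancelling $t^{n-2}$. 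Hence $\alpha_0=\tfrac{1}{n-1}d_{\mathrm h}\beta_0$ and
\[
\omega=t^{n-1}\alpha_0+t^{n-2}\,dt\wedge\beta_0=a\!\left(\tfrac{1}{n-1}\beta_0\right),
\]
so $\ker f=\im a$.

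The main obstacle is getting the local structure right, and in particular the truncation phenomenon: the coefficient of $dt$ genuinely lives on $X_{n-1}$ rather than $X_n$, which is exactly what makes the index shifts in the sequence (an $(r-1)$-form on $X_1$ on the left, closed forms on $X_n$ and $X_{n-1}$ on the right) match up. The absolute differentials $\Omega^1_{k/\Q}$ enlarge the horizontal part but are inert under both $\partial_t$ and multiplication by $t$, so they contribute only notational overhead; the only arithmetic facts used are that $n$ and $n-1$ are invertible in the characteristic-zero ring $\sO_{X_n}$ and that $\sO_X$ is $t$-torsion free. Throughout one takes $n\geq 2$, as is implicit in the appearance of $X_{n-1}$.
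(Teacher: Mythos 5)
Your proof is correct and is in essence the paper's own argument carried out in detail: the paper's one-line proof rests on exactly the two facts you use, namely well-definedness from $t^{n}=t^{n-1}dt=0$ and the identity $\ker(\Omega^r_{X_n}\to\Omega^r_{X_{n-1}})=t^{n-1}\Omega^r_{X_n}+t^{n-2}dt\wedge\Omega^{r-1}_{X_n}$, of which your horizontal/$dt$-degree decomposition (obtained from the infinitesimal splitting $X_n\cong X_1\times_k S_n$, the same device the paper uses to prove Lemma~\ref{mil.lem4}) is just the explicit local form. Your description $a(\eta)=d(t^{n-1}\tilde\eta)$ for a lift $\tilde\eta$ is a nice touch, since it makes well-definedness, closedness of the image, and $f\circ a=0$ all immediate at once.
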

\begin{proof}Note that $a$ is well-defined because $t^{n}=t^{n-1}dt=0$ on $X_n$. Since 
\eq{}{\ker(\Omega^r_{X_n} \to \Omega^r_{X_{n-1}}) = t^{n-1}\Omega^r_{X_n}+t^{n-2}dt\wedge \Omega^{r-1}_{X_n}
}
the assertion is clear. 
 \end{proof}
\begin{lem}\label{lem2} 
There is an exact sequence of Zariski sheaves
\eq{3}{\Omega^{r-1}_{X_1} \xrightarrow{b} \sKM_{r,X_n} \to \sKM_{r,X_{n-1}} \to 0.
}
Here $b (x\, d\log(y_1)\wedge\cdots\wedge d\log y_{r-1}) = \{1+xt^{n-1},y_1,\ldots,y_{r-1}\}$. 
 \end{lem}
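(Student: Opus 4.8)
The plan is to verify exactness stalkwise. Since all the objects are Zariski sheaves, it suffices to work at a point $x\in X_n$, where the stalks of $\sKM$ are the Milnor $K$-groups of the local ring $R=\sO_{X_n,x}$, with $R/t^{n-1}R=\sO_{X_{n-1},x}$ and $R/tR=\sO_{X_1,x}$; set $I=t^{n-1}R$, so that $I^2=0$ and $tI=0$ because $t^n=0$ on $X_n$. Surjectivity of $\sKM_{r,X_n}\to\sKM_{r,X_{n-1}}$ is then immediate: $\rKM_r$ of a local ring is generated by symbols of units, and any unit of $R/I$ lifts to a unit of $R$ since $I$ is nilpotent, so every symbol lifts. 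The inclusion $\im(b)\subseteq\ker(\sKM_{r,X_n}\to\sKM_{r,X_{n-1}})$ is equally formal: modulo $t^{n-1}$ the unit $1+xt^{n-1}$ becomes $1$, so $b(\cdot)$ restricts to the trivial symbol on $X_{n-1}$.

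Next I would show that $b$ is well defined. Choosing lifts $\tilde x,\tilde y_i\in R$, one sets $b(x\,d\log y_1\wedge\cdots\wedge d\log y_{r-1})=\{1+\tilde x t^{n-1},\tilde y_1,\dots,\tilde y_{r-1}\}$ and must check independence of the lifts and the defining relations of $\Omega^{r-1}_{X_1}$. Most of these are routine symbol identities: the value is independent of $\tilde x$ and additive in $x$ because $(1+\tilde x t^{n-1})(1+\tilde x' t^{n-1})=1+(\tilde x+\tilde x')t^{n-1}$ once $t^{2(n-1)}=0$; additivity in each $d\log y_i$ and antisymmetry under permutations follow from bilinearity and antisymmetry of symbols; and $d\log y\wedge d\log y=0$ maps to $\{1+xt^{n-1},y,y,\dots\}=\pm\{1+xt^{n-1},-1,\dots\}$, which vanishes since $\{u,-1\}$ is always $2$-torsion while, in characteristic $0$ with $t^{2(n-1)}=0$, the unit $1+xt^{n-1}=(1+\tfrac12 xt^{n-1})^2$ is a square, so $\{1+xt^{n-1},-1\}$ is twice a $2$-torsion class, hence $0$.

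The one substantial point, and the main obstacle, is independence of the lifts $\tilde y_i$: replacing $\tilde y_i$ by $\tilde y_i(1+tc)$ changes the symbol by $\{1+xt^{n-1},1+tc\}$, so I must prove $\{1+at^{n-1},1+bt\}=0$ in $\rKM_2(R)$. Writing a general unit congruent to $1$ modulo $t$ as a product $\prod_{j=1}^{n-1}(1+b_jt^j)$ along the filtration $U^{(m)}=1+t^mR$, this reduces to the monomial vanishing $\{1+at^p,1+bt^q\}=0$ whenever $t^{p+q}=0$. This is the Milnor $K$-theoretic shadow of the relation $t^{p+q-1}\,dt=0$ in $\Omega^1_{X_n}$ (valid in characteristic $0$), and I would deduce it from the known structure of the relative Milnor $K$-theory of a truncated polynomial ring, whose graded pieces are differential forms and in which such products vanish for degree reasons. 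The compatibility of $b$ with Lemma~\ref{lem1} under $d\log$, namely $d\log\circ b=a$ (which one checks on generators, the error terms dying by $t^{2(n-1)}=0$ and $t^{p+q-1}dt=0$), serves as a consistency check and, together with the injectivity of $a$, already shows $b$ is injective.

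Finally, for the reverse inclusion $\ker\subseteq\im(b)$, I would invoke the standard fact that $\ker(\rKM_r(R)\to\rKM_r(R/I))$ is generated by symbols $\{v,u_2,\dots,u_r\}$ with $v\in 1+I$ and $u_i\in R^*$. Such a $v$ equals $1+xt^{n-1}$ for some $x\in\sO_{X_1}$, and by the lift-independence just established the class of $\{1+xt^{n-1},u_2,\dots,u_r\}$ depends only on the reductions $\bar u_i\in\sO_{X_1}^*$. Since over a local ring $\Omega^{r-1}_{X_1}$ is generated by forms $x\,d\log\bar u_2\wedge\cdots\wedge d\log\bar u_r$ — each $dy$ being a unit multiple of $d\log$ of a unit, as either $y$ or $1+y$ is invertible, and this includes the absolute differentials of $k$ since every nonzero element of $k$ is a unit — each such generator lies in $\im(b)$, giving exactness at $\sKM_{r,X_n}$ and completing the proof.
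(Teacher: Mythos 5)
Your overall strategy is the same as the paper's: define $b$ on $d\log$ generators, check relations, and get exactness at the middle from the fact that $\ker[\rKM_r(R)\to\rKM_r(R/t^{n-1}R)]$ is generated by symbols with one entry in $1+t^{n-1}R$. The surjectivity argument, the inclusion $\im(b)\subseteq\ker$, and the final generation argument are all fine. But your well-definedness check has a genuine gap: the relations you verify do not present $\Omega^{r-1}_{X_1}$. Additivity in $x$, multiplicativity in each $y_i$, antisymmetry, the alternating relation, and lift-independence are precisely the relations defining $\sO_{X_1}\otimes_\Z\bigwedge^{r-1}_\Z\sO^\times_{X_1}$; what cuts $\Omega^{r-1}_{X_1}$ out of this group (the module $\sR$ in the paper's presentation \eqref{5}) is the additional relation
\begin{equation*}
a\,d\log a+b\,d\log b=(a+b)\,d\log(a+b),\qquad a,\,b,\,a+b\in\sO^\times_{X_1},
\end{equation*}
which is just $da+db=d(a+b)$ rewritten in $d\log$ coordinates. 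None of your formal symbol identities imply that your map kills these elements, because none of them couple the coefficient slot with a unit slot. Concretely: $d\log 2=\tfrac{1}{2}\,d(2)=0$ in $\Omega^1_{X_1}$, so well-definedness forces $\{1+t^{n-1},2,y_2,\ldots\}=0$ in $\sKM_{r,X_n}$, and no combination of your listed identities yields this. What you must prove is exactly $\{1+at^{n-1},a\}+\{1+bt^{n-1},b\}-\{1+(a+b)t^{n-1},a+b\}=0$ in $\sK_{2,X_n}$, which is what the paper isolates as the crux of its proof and cites \cite{BlK2} for; it is a Dennis--Stein-type computation, not a formal consequence of bilinearity and the Steinberg relation.

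A second, lesser issue: the vanishing $\{1+at^{n-1},1+bt\}=0$ (your ``main obstacle'') is indeed also needed, but you propose to deduce it from ``the known structure of the relative Milnor $K$-theory of a truncated polynomial ring,'' which is close to circular in this paper's context — Lemma~\ref{lem2} and Proposition~\ref{lem3} are precisely the steps by which the paper establishes that structure. You need either a direct symbol computation or a precise external citation (again \cite{BlK2} suffices). Finally, note that injectivity of $b$, which you derive from $d\log\circ b=a$, is not claimed in Lemma~\ref{lem2}; it only appears in Proposition~\ref{lem3}, and your observation is essentially how the paper obtains it there.
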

\begin{proof}We can present $\Omega^1_{X_1}$  with an exact sequence of $\sO_{X_1}$-modules 
\eq{}{0\to \sR \to \sO_{X_1}\otimes_\Z \sO_{X_1}^\times \xrightarrow{u\otimes v \mapsto udv/v} \Omega^1_{X_1} \to 0.
}
Here $\sR$ is the  sub-$\sO_{X_1}$-module under left multiplication with generators of the form $a\otimes a + b\otimes b -(a+b)\otimes (a+b)$. 
It follows that $\Omega^{r-1}_{X_1}= \bigwedge^{r-1}_{\sO_{X_1}}\Omega^1_{X_1}$ has a
presentation of the form
\eq{5}{ \sR\otimes_\Z \bigwedge_\Z^{r-2}\sO_{X_1}^\times \to \sO_{X_1}\otimes_\Z \bigwedge_\Z^{r-1}\sO_{X_1}^\times \to \Omega^{r-1}_{X_1} \to 0.
} 
For $r=2$ the map 
\eq{6}{\Omega^1_{X_1} \cong t^{n-1}\Omega^1_{X_n}\to \sKM_{2,X_n};\quad cda/a\mapsto \{1+ct^{n-1},a\}.
}
is well-defined. It boils down to showing $$\{1+at^{n-1},a\}+\{1+bt^{n-1},b\}-\{1+(a+b)t^{n-1},
a+b\}=0$$ in $\sK_{2, X_n}$ for $a, b, a+b$ in $\sO^\times_{X_1}$.
 See \cite[Sec.\ 2]{BlK2} for more details.
Then from the presentation \eqref{5} we deduce that $b$ is well-defined. The exactness of
\eqref{3} is straightforward. 
\end{proof}

\begin{prop}\label{lem3}  
The square 
\eq{7}{\begin{CD}\sKM_{r,X_n} @>>> \sKM_{r,X_1}  \\
@VV d\log V  @VV d\log V \\
Z^r_{X_n} @>>> Z^r_{X_1}
\end{CD}
}
is cartesian and there is a morphism between short exact sequences
\eq{sec1.shortex}{
\xymatrix{
0\ar[r]  & \Omega^{r-1}_{X_1}  \ar[r]^b \ar[d]  &  \sKM_{r,X_n}  \ar[r]\ar[d]^{d\log} &
\sKM_{r,X_{n-1}} \ar[r]\ar[d]^{d\log} & 0\\
0\ar[r]  & \Omega^{r-1}_{X_1}  \ar[r]^a   &  Z^r_{X_n}  \ar[r] & Z^r_{X_{n-1}}  \ar[r] & 0  
}
}
 \end{prop}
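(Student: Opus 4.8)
The plan is to prove the two assertions of Proposition~\ref{lem3} separately, using Lemma~\ref{lem2} and the structure already set up in Lemma~\ref{lem1}. For the cartesian square \eqref{7}, I would argue that a section of $Z^r_{X_n}$ together with a compatible section of $\sKM_{r,X_1}$ glue to a section of $\sKM_{r,X_n}$, and that this gluing is unique. The key observation is that the kernels of the two horizontal restriction maps $\sKM_{r,X_n}\to \sKM_{r,X_1}$ and $Z^r_{X_n}\to Z^r_{X_1}$ can be analyzed by iterating Lemmas~\ref{lem1} and~\ref{lem2}: the kernel of $\sKM_{r,X_n}\to \sKM_{r,X_{n-1}}$ is the image $b(\Omega^{r-1}_{X_1})$, and similarly the kernel of $Z^r_{X_n}\to Z^r_{X_{n-1}}$ is $a(\Omega^{r-1}_{X_1})$. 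Comparing these via $d\log$ and descending by induction on $n$ should reduce the cartesian property to an injectivity/surjectivity statement at each infinitesimal level.

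The most efficient route is in fact to establish the morphism of short exact sequences \eqref{sec1.shortex} first, since the cartesian square will then follow from it. I would take the bottom row to be the sequence from Lemma~\ref{lem1} (extended by surjectivity on the right, which one checks locally) and the top row to be the sequence from Lemma~\ref{lem2}. The two outer vertical maps are the $d\log$ maps, and the middle is $d\log$ as well. The heart of the matter is the commutativity of the left square, i.e.\ that $a = d\log\circ b$ on $\Omega^{r-1}_{X_1}$. I would verify this on generators $x\,d\log y_1\wedge\cdots\wedge d\log y_{r-1}$: applying $d\log$ to $b(x\,d\log y_1\wedge\cdots) = \{1+xt^{n-1},y_1,\ldots,y_{r-1}\}$ gives
\[
\frac{d(1+xt^{n-1})}{1+xt^{n-1}}\wedge d\log y_1 \wedge\cdots\wedge d\log y_{r-1},
\]
and since $(1+xt^{n-1})^{-1} \equiv 1 - xt^{n-1} \pmod{t^n}$ and $t^n = t^{n-1}dt = 0$ on $X_n$, the leading factor reduces to $d(xt^{n-1}) = t^{n-1}dx + (n-1)t^{n-2}x\,dt$. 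Matching this against the formula $a(\eta) = t^{n-1}d\eta + (n-1)t^{n-2}dt\wedge\eta$ from Lemma~\ref{lem1}, with $\eta = x\,d\log y_1\wedge\cdots\wedge d\log y_{r-1}$, gives exactly $a(\eta)$ once one notes that $d\eta = dx\wedge d\log y_1\wedge\cdots$ because the $d\log y_i$ are closed. The commutativity of the right square is the tautological compatibility of $d\log$ with restriction $X_n \to X_{n-1}$.

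The main obstacle I anticipate is the verification that the left-hand square genuinely commutes on the nose, rather than up to the sign and torsion subtleties that plague $d\log$ computations in mixed characteristic; here we are in characteristic zero, so no $p$-torsion complications arise, but one must be careful that the factor $(n-1)t^{n-2}$ is handled correctly and that the identification $\Omega^1_{X_1}\cong t^{n-1}\Omega^1_{X_n}$ used in \eqref{6} is compatible across the wedge powers via the presentation \eqref{5}. Once the diagram \eqref{sec1.shortex} is in place with exact rows, the cartesian property of \eqref{7} follows formally: the snake lemma applied to the two rows, together with the fact that the left vertical map is the identity on $\Omega^{r-1}_{X_1}$ (an isomorphism), forces the square relating the total objects $\sKM_{r,X_n}$, $\sKM_{r,X_1}$, $Z^r_{X_n}$, $Z^r_{X_1}$ to be a pullback by an inductive comparison of the fibers. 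I would close the argument by induction on $n$, the base case $n=1$ being trivial, and the inductive step being precisely the fact that passing from $n-1$ to $n$ changes both the $\sKM$ and the $Z^r$ sides by the \emph{same} group $\Omega^{r-1}_{X_1}$ under compatible maps.
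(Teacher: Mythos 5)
Your route is essentially the paper's own: the generator computation showing $d\log\circ b=a$ is exactly the compatibility that the paper encodes (implicitly) when it splices Lemmas~\ref{lem1} and~\ref{lem2} into the diagram \eqref{10}, and your formal step --- two exact rows with the identity on $\Omega^{r-1}_{X_1}$ force the right-hand square to be a pullback, followed by induction on $n$ composing fiber products over $Z^r_{X_{n-1}}$ --- is the same reduction the paper performs via
$\sKM_{r,X_1}\times_{Z^r_{X_1}}Z^r_{X_n} = \sKM_{r,X_1}\times_{Z^r_{X_1}} Z^r_{X_{n-1}}\times_{Z^r_{X_{n-1}}}Z^r_{X_n}$.
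Two small points you should make explicit: Lemma~\ref{lem2} gives only right-exactness of the top row, so the injectivity of $b$ (the leading ``$0\to$'' in \eqref{sec1.shortex}) must be deduced from $d\log\circ b=a$ together with the injectivity of $a$ from Lemma~\ref{lem1}; and, conveniently, the cartesian property itself never uses surjectivity of the bottom row, so that part of your argument stands independently of the issue below.

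The genuine gap is the step you wave off as ``extended by surjectivity on the right, which one checks locally.'' Surjectivity of $Z^r_{X_n}\to Z^r_{X_{n-1}}$ is part of the statement (the bottom row of \eqref{sec1.shortex} ends in $\to 0$), and it is not a routine local verification: locally you can lift a closed form $\omega$ on $X_{n-1}$ to some form $\tilde\omega$ on $X_n$, but $d\tilde\omega$ is in general nonzero, and to correct $\tilde\omega$ by an element of $\ker(\Omega^r_{X_n}\to\Omega^r_{X_{n-1}})$ you need to know that the closed element $d\tilde\omega$ of $\ker(\Omega^{r+1}_{X_n}\to\Omega^{r+1}_{X_{n-1}})$ is a coboundary \emph{inside that kernel complex}, i.e.\ that the kernel complex is acyclic. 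That acyclicity is precisely the content of Lemma~\ref{mil.lem4} (the quasi-isomorphism $\Omega^*_{X_n}\to\Omega^*_{X_1}$, which rests on the characteristic-zero infinitesimal splitting $X_n\cong X_1\times_k S_n$ and a formal Poincar\'e lemma in the $t$-direction). The paper handles exactly this point by comparing the exact sequences $0\to B^r\to Z^r\to \sH^r\to 0$ on $X_n$ and $X_{n-1}$: the map $B^r_{X_n}\to B^r_{X_{n-1}}$ is obviously surjective, and $\sH^r_{X_n}\to \sH^r_{X_{n-1}}$ is an isomorphism by Lemma~\ref{mil.lem4}, whence $Z^r_{X_n}\surj Z^r_{X_{n-1}}$. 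Supply this argument (or the equivalent acyclicity argument) in place of ``one checks locally,'' and your proof is complete.
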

\begin{proof}
We have
\eq{}{\sKM_{r,X_1}\times_{Z^r_{X_1}}Z^r_{X_n} = \sKM_{r,X_1}\times_{Z^r_{X_1}} 
Z^r_{X_{n-1}}\times_{Z^r_{X_{n-1}}}Z^r_{X_n}
}
In order to prove the first statement
by induction we are thus reduced to proving that the diagram 
\eq{9}{\begin{CD}\sKM_{r,X_n} @>>> \sKM_{r,X_{n-1}} \\
@VV d\log V  @VV d\log V \\
Z^r_{X_n} @>>> Z^r_{X_{n-1}}
\end{CD}
}
is cartesian. Plugging in Lemmas \ref{lem1} and \ref{lem2} yields
\eq{10}{\begin{CD}@. \Omega^{r-1}_{X_1} @>>> \sKM_{r,X_n} @>>>\sKM_{r,X_{n-1}} @>>> 0 \\
@. @VV = V @VV d\log V @VV d\log V \\
0 @>>>\Omega^{r-1}_{X_1} @>>>     Z^r_{X_n} @>f>> Z^r_{X_{n-1}}  
      \end{CD}
}
It follows that \eqref{9} is cartesian and that the upper row in \eqref{sec1.shortex} is
exact  as claimed. The exactness of the lower row of \eqref{sec1.shortex}, i.e.\ the
surjectivity of $f$ in \eqref{10}, follows from the
commutative diagram of short exact sequences
\eq{}{\begin{CD}
       0 @>>> B^r_{X_n} @>>> Z^r_{X_n} @>>> \sH^r_{X_n} @>>> 0 \\
@. @VVV @VVV @VV \cong V \\
 0 @>>> B^r_{X_{n-1}} @>>> Z^r_{X_{n-1}} @>>> \sH^r_{X_{n-1}} @>>> 0
      \end{CD}
}
were $\sH^r$ is the de Rham cohomology sheaf. Indeed the right vertical map is an
isomorphism by Lemma~\ref{mil.lem4} and the left vertical map is obviously surjective.

\end{proof}

\begin{lem}\label{mil.lem4} With notation as above, the map of complexes $\Omega^*_{X_n} \to \Omega^*_{X_1}$ is a quasi-isomorphism.
\end{lem}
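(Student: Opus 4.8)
The plan is to argue by induction on $n$, the case $n=1$ being the identity. First I would observe that since $\sO_{X_n}\to\sO_{X_{n-1}}$ is surjective, so is the restriction $\Omega^r_{X_n}\to\Omega^r_{X_{n-1}}$ in each degree, and hence the map of complexes $\Omega^*_{X_n}\to\Omega^*_{X_{n-1}}$ is surjective. Writing $K^*$ for its kernel, I get a short exact sequence of complexes of Zariski sheaves $0\to K^*\to\Omega^*_{X_n}\to\Omega^*_{X_{n-1}}\to 0$. By the induction hypothesis $\Omega^*_{X_{n-1}}\to\Omega^*_{X_1}$ is a quasi-isomorphism, so it suffices to prove that $K^*$ is acyclic; composing then yields the assertion for $\Omega^*_{X_n}\to\Omega^*_{X_1}$. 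By the kernel computation already carried out in the proof of Lemma~\ref{lem1}, one has $K^r=t^{n-1}\Omega^r_{X_n}+t^{n-2}dt\wedge\Omega^{r-1}_{X_n}$, a subcomplex consisting of $\sO_{X_1}$-modules (each element is killed by $t$).

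Acyclicity of $K^*$ is local on $X$, so I would check it on stalks, working in \'etale coordinates $t,x_1,\dots,x_d$ provided by the smoothness of $X/S$. Splitting a form into its $dt$-part and its $dt$-free part, and using $t^{n-1}dt=0$ and $t^n=0$, one sees that $t^{n-1}\Omega^r_{X_n}\cong\Omega^r_{X_1}$ and $t^{n-2}dt\wedge\Omega^{r-1}_{X_n}\cong dt\wedge\Omega^{r-1}_{X_1}$ with the sum direct, so that $K^r\cong\Omega^r_{X_1}\oplus\Omega^{r-1}_{X_1}$, an element $(\alpha,\beta)$ corresponding to $t^{n-1}\tilde\alpha+t^{n-2}dt\wedge\tilde\beta$ for chosen lifts. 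A direct computation using $d(t^{n-1}\tilde\alpha)=(n-1)t^{n-2}dt\wedge\tilde\alpha+t^{n-1}d\tilde\alpha$, the identity $d(t^{n-2}dt\wedge\tilde\beta)=-t^{n-2}dt\wedge d\tilde\beta$, and $t^{n-1}dt=0$ shows that under this identification the differential of $K^*$ becomes
\[
d(\alpha,\beta)=\big(d\alpha,\ (n-1)\alpha-d\beta\big),
\]
where $d$ now denotes the de Rham differential on $\Omega^*_{X_1}$.

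The crux is the scalar $n-1$, which arises from $d(t^{n-1})=(n-1)t^{n-2}dt$ and which is invertible in characteristic $0$; thus $K^*$ behaves like the cone on multiplication by the unit $n-1$, and is contractible. Concretely I would exhibit the $\sO_{X_1}$-linear homotopy $h(\alpha,\beta)=\big(\tfrac{1}{n-1}\beta,\,0\big)$ and check $dh+hd=\mathrm{id}$, so that $K^*$ is acyclic, completing the induction. I expect the only real obstacle to be bookkeeping rather than conceptual: since we work with \emph{absolute} forms, $\Omega^*_{X_1}$ carries the extra generators coming from $\Omega^1_{k/\Q}$, and one must verify these do not interfere. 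They do not, because the derivation $\partial_t$ dual to $dt$ annihilates them, so they appear only as coefficients and are carried along inertly by the homotopy. Characteristic $0$ is used twice and essentially: to force $t^{n-1}dt=0$ (hence the clean description of $K^*$) and to invert $n-1$.
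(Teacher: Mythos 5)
Your argument is correct, and it takes a genuinely different route from the paper's. The paper localizes, splits the surjection $A_n\to A_1$ by the infinitesimal criterion for smoothness, writes $X_n\cong X_1\times_k S_n$, and concludes via a K\"unneth decomposition of absolute de Rham complexes together with the characteristic-zero Poincar\'e lemma for the base, encoded in the sequence $0\to tk[t]/(t^n)\xrightarrow{d}\Omega^1_{S_n}\to\Omega^1_{k/\Q}\to 0$. You instead induct on $n$ and contract the kernel complex by hand: your identification $K^r\cong\Omega^r_{X_1}\oplus\Omega^{r-1}_{X_1}$, the induced differential $d(\alpha,\beta)=(d\alpha,\,(n-1)\alpha-d\beta)$, and the homotopy identity $dh+hd=\mathrm{id}$ for $h(\alpha,\beta)=(\tfrac{1}{n-1}\beta,0)$ all check out, including in degree $0$ where the second component vanishes. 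Two remarks. First, your route does not dispense with the paper's key local input: the directness of the sum, i.e.\ injectivity of $(\alpha,\beta)\mapsto t^{n-1}\tilde\alpha+t^{n-2}dt\wedge\tilde\beta$, needs the local product structure --- your \'etale coordinates, or equivalently the splitting of $A_n\to A_1$ --- exactly where the paper invokes the infinitesimal criterion; by contrast, well-definedness independently of the chosen lifts is automatic, since the kernel of $\Omega^*_{X_n}\to\Omega^*_{X_1}$ is annihilated both by multiplication by $t^{n-1}$ and by exterior multiplication by $t^{n-2}dt$ (using $t^n=0$ and $t^{n-1}dt=0$, the latter being one of your two uses of characteristic $0$). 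Second, as to what each approach buys: yours avoids the K\"unneth isomorphism for absolute differential forms (which, as written in the paper with $\otimes_\Q$, should really be a tensor product over $\Omega^*_{k/\Q}$), it proves the slightly stronger statement that $\ker(\Omega^*_{X_n}\to\Omega^*_{X_{n-1}})$ is contractible as a complex of sheaves via an $\sO_{X_1}$-linear homotopy, and it runs in parallel with the inductive treatment of $\sKM_{r,X_n}$ and $Z^r_{X_n}$ in Lemmas~\ref{lem1} and~\ref{lem2}; the paper's argument is shorter, treats all $n$ at once, and isolates the characteristic-zero input in a single computation on $S_n$.
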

\begin{proof}The assertion is local so we may assume $X_n=\Spec A_n$ is affine. The algebra $A_1 = A_n/tA_n$ 
is therefore smooth over $k$, so by the infinitesimal criterion for smoothness \cite[(17.1.1)]{EGA4}, there exists a splitting of the surjection $A_n \to A_1$. It follows 
that we may write $X_n \cong X_1\times_k S_n$. 
This implies that there is an isomorphism between differential graded algebras $\Omega^*_{X_n}
\cong \Omega^*_{X_1}
\otimes_\Q \Omega^*_{S_n}$.  Since there is a short exact sequence
 $$ 0\to t k[t]/t^{n}
k[t] \xrightarrow{d} \Omega^1_{S_n} \to  \Omega^1_{k/\Q}\to 0 ,$$
we deduce that $\Omega^*_{A_n} \to \Omega^*_{A_0}$ is a quasi-isomorphism as claimed.
 \end{proof}

\section{Local cohomology}

\noindent
Let $X,S,X_n,S_n$ and $k$ be as in Section~\ref{MilnorK}. 
One of the central techniques for proving our main Theorem~\ref{main.thm} will be to study the coniveau
complex for Milnor $K$-sheaves of $X_n$. A general reference for the coniveau complex is \cite[Ch.~IV]{Ha}.

\begin{defn}\label{tr.defnres} \mbox{}
 For an arbitrary Zariski
sheaf of abelian groups $\sF$  on $X_1$ let us consider the coniveau complex of Zariski sheaves $\sC(F)$ defined as
\eq{tr.cpx}{\bigoplus_{x\in X_1^{(0)} } i_{x,*} H_x^0(X_1, \sF) \to  \bigoplus_{x\in X_1^{(1)}
  }i_{x,*}  H_x^1(X_1, \sF) \to  \bigoplus_{x\in X_1^{(2)}
  }i_{x,*}  H_x^2(X_1, \sF  )  \to \cdots . }
where the left group is put into cohomological degree $0$ and where $i_x:x\to X$ is the
natural monomorphism. There is a canonical
augmentation $\sF \to \sC^0(\sF)$.

By $\C(\sF)$ we denote the complex of global sections $\Gamma(X,\sC(\sF))$.
\end{defn}

\begin{defn}An abelian sheaf $\sF$ on $X_1$ is Cohen-Macaulay (CM for short) if for every
  scheme point $x\in X_1$ we have $H^i_x(X_1,\sF)=0$ for  $i\neq \codim(x)$. 
\end{defn}

 A basic observation about CM-sheaves is that they give rise to exact coniveau complexes.
 This follows directly from the degeneration of the coniveau spectral sequence for these
 sheaves, see~\cite[Prop.~IV.2.6]{Ha}. 
\begin{prop}\label{tr.bascm} Let $\sF$ be a CM abelian Zariski sheaf. Then $\sC(\sF)$ is an acyclic
  resolution of $\sF$. In particular one has $H^*(X,\sF)\cong H^*(\C(\sF))$. 
\end{prop}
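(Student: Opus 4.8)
The plan is to exploit the coniveau spectral sequence associated to the filtration of $X_1$ by codimension of support. For any abelian Zariski sheaf $\sF$, the local cohomology groups $H^i_x(X_1,\sF)$ assemble into a spectral sequence whose $E_1$-page is precisely the coniveau complex $\sC(\sF)$, i.e.
\eq{}{E_1^{p,q} = \bigoplus_{x\in X_1^{(p)}} i_{x,*} H^{p+q}_x(X_1,\sF) \Longrightarrow \sH^{p+q}(\sF),}
where the differentials $d_1$ along each row give the maps of $\sC(\sF)$ displayed in \eqref{tr.cpx}. This is the content of \cite[Prop.~IV.2.6]{Ha}, which I would cite directly rather than reconstruct.

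The key observation is that the CM hypothesis forces this spectral sequence to degenerate at $E_2$ in a strong way. By definition of a CM sheaf, the term $E_1^{p,q} = \bigoplus_{x\in X_1^{(p)}} i_{x,*}H^{p+q}_x(X_1,\sF)$ vanishes unless $p+q = \codim(x) = p$, that is, unless $q=0$. Thus the $E_1$-page is concentrated in the single row $q=0$, which is exactly the coniveau complex $\sC(\sF)$. First I would record this vanishing, which is immediate from the definition. It follows that the only possibly nonzero terms on $E_2$ lie in that row, so the spectral sequence degenerates and the abutment $\sH^{p}(\sF)$ in degree $p$ is computed as the cohomology of $\sC(\sF)$ at spot $p$. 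Since $\sF$ is a sheaf (concentrated in degree $0$), its hypercohomology sheaves satisfy $\sH^0(\sF)=\sF$ and $\sH^p(\sF)=0$ for $p>0$; comparing with the degenerate spectral sequence shows that $\sC(\sF)$ has cohomology $\sF$ in degree $0$ and vanishing cohomology in positive degrees. Together with the augmentation $\sF \to \sC^0(\sF)$ this yields that $\sC(\sF)$ is a resolution of $\sF$.

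To conclude acyclicity I would check that each term $\bigoplus_{x\in X_1^{(p)}} i_{x,*}H^p_x(X_1,\sF)$ is acyclic for global sections, so that $\sC(\sF)$ is an acyclic resolution and hence computes $H^*(X,\sF)$. This is standard: the local cohomology sheaf $H^p_x(X_1,\sF)$ is supported at the point $x$ (more precisely, its pushforward is a skyscraper-type sheaf built from sections on the local ring at $x$), and such sheaves are flasque, hence acyclic; a direct sum of flasque sheaves is flasque. Applying $\Gamma(X,-)$ to the resolution and using this acyclicity gives $H^*(X,\sF)\cong H^*(\C(\sF))$, as asserted.

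The main obstacle is purely expository rather than mathematical: everything rests on the degeneration statement \cite[Prop.~IV.2.6]{Ha}, so the real work is correctly matching conventions — that the coniveau complex is indeed the $E_1$ row $q=0$ with the stated differentials, and that the CM condition is exactly the vanishing of all other rows. Once those identifications are in place the argument is formal. I would therefore spend care on the bookkeeping of the spectral sequence indexing and on the flasqueness claim for the terms, both of which are routine but must be stated cleanly.
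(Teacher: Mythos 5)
Your proposal is correct and takes essentially the same route as the paper, whose entire justification is the degeneration of the coniveau spectral sequence for CM sheaves with a citation of \cite[Prop.~IV.2.6]{Ha}. You simply unfold that citation — the CM hypothesis kills all rows $q\neq 0$ of the $E_1$-page, and the terms $\bigoplus_{x\in X_1^{(p)}} i_{x,*}H^p_x(X_1,\sF)$ are flasque (direct sums of flasque sheaves being flasque since $X_1$ is Noetherian) — details the paper leaves implicit.
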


The aim of this section is to show:
\begin{thm}\label{thm:CM}
The sheaves  $\sKM_{r,X_n}, \Omega^i_{X_n}, \sH_{X_1}^r, Z_{X_n}^r, B^r_{X_n}$ are CM for all $n\ge 1$.
\end{thm}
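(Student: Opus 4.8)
The plan is to prove that each of the sheaves
$\sKM_{r,X_n}, \Omega^i_{X_n}, \sH^r_{X_1}, Z^r_{X_n}, B^r_{X_n}$
is Cohen--Macaulay by reducing everything, via the short exact sequences already
established in Section~\ref{MilnorK}, to the single case of the absolute
differential sheaves $\Omega^i_{X_n}$ on the non-reduced scheme $X_n$.
Since the CM condition $H^j_x(X_1,\sF)=0$ for $j\neq\codim(x)$ is stable under
extensions in short exact sequences (a sheaf sandwiched between two CM sheaves of
the \emph{same} vanishing range is again CM, by the long exact sequence of local
cohomology), I would first identify a core sheaf that is manifestly CM and then
propagate the property outward. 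The natural core is $\Omega^i_{X_n}$, because
local cohomology $H^j_x$ can be computed after passing to the smooth ambient
situation.

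First I would treat $\Omega^i_{X_n}$. Using the splitting
$X_n\cong X_1\times_k S_n$ produced in the proof of Lemma~\ref{mil.lem4} (via the
infinitesimal lifting criterion for the smooth algebra $A_1$), one has a direct-sum
decomposition of $\Omega^i_{X_n}$ into finitely many summands, each of the shape
$\Omega^p_{X_1}\otimes_k(\text{a finite-dimensional }k[t]/(t^n)\text{-module of forms on }S_n)$.
Each summand is, as a Zariski sheaf on the reduced scheme $X_1$, a direct sum of
copies of a locally free $\sO_{X_1}$-module $\Omega^p_{X_1}$. A locally free sheaf on
the smooth $k$-variety $X_1$ is CM: for such a sheaf $H^j_x(X_1,\sF)$ vanishes away
from $j=\codim(x)$ because $\sO_{X_1,x}$ is a regular, hence Cohen--Macaulay, local
ring and local cohomology of a free module over a CM local ring is concentrated in
the top degree. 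Hence each summand, and therefore $\Omega^i_{X_n}$, is CM. This also
immediately gives $\sH^r_{X_1}=\sH^r_{X_n}$ CM, since the de Rham cohomology sheaves
of the smooth variety $X_1$ are again locally free $\sO_{X_1}$-modules.

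With the $\Omega^i_{X_n}$ in hand, I would run an induction on $n$ to obtain the
remaining three sheaves. For the closed forms $Z^r_{X_n}$, the short exact sequence of
Lemma~\ref{lem1},
\[
0 \to \Omega^{r-1}_{X_1} \xrightarrow{a} Z^r_{X_n} \xrightarrow{f} Z^r_{X_{n-1}} \to 0,
\]
(whose surjectivity of $f$ is established in the proof of Proposition~\ref{lem3})
sandwiches $Z^r_{X_n}$ between the CM sheaf $\Omega^{r-1}_{X_1}$ and the sheaf
$Z^r_{X_{n-1}}$, CM by induction, with base case $Z^r_{X_1}$ a locally free
$\sO_{X_1}$-module; the extension stability then gives $Z^r_{X_n}$ CM. The same
argument applied to the upper row of \eqref{sec1.shortex},
\[
0 \to \Omega^{r-1}_{X_1} \xrightarrow{b} \sKM_{r,X_n} \to \sKM_{r,X_{n-1}} \to 0,
\]
yields $\sKM_{r,X_n}$ CM by induction on $n$, the base case $\sKM_{r,X_1}$ being the
Milnor $K$-sheaf of the smooth variety $X_1$, which is CM by the Gersten resolution
of \cite{Ke}. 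Finally $B^r_{X_n}$ is CM by applying extension stability to
$0\to B^r_{X_n}\to Z^r_{X_n}\to \sH^r_{X_n}\to 0$, since both outer terms are now
known to be CM.

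The main obstacle I anticipate is not the homological bookkeeping but the base cases on
the reduced smooth variety $X_1$: one must know that the Milnor $K$-sheaf
$\sKM_{r,X_1}$ is Cohen--Macaulay, i.e.\ that it admits a Gersten-type resolution with
the expected codimension behavior of local cohomology. This is exactly the content of
Kerz's Gersten conjecture for Milnor $K$-theory \cite{Ke}, and the whole induction rests
on invoking it correctly; the differential-form inputs $\Omega^i_{X_1}$, $\sH^r_{X_1}$
are comparatively routine since they are locally free over a regular ring. A secondary
subtlety is verifying that the CM vanishing ranges match across the short exact
sequences --- that the codimension of a point $x$ is computed identically whether one
regards $x$ in $X_1$, $X_{n-1}$, or $X_n$ --- which holds because all these schemes share
the same underlying topological space as $X_1$, so no index shift occurs and the
extension-stability argument applies verbatim.
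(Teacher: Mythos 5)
Your overall architecture---find a CM core and propagate the property through the short exact sequences of Section~\ref{MilnorK} by extension stability---matches the paper, and two of your reductions are sound: the treatment of $\Omega^i_{X_n}$ (the local splitting $X_n\cong X_1\times_k S_n$ from the proof of Lemma~\ref{mil.lem4} does exhibit $\Omega^i_{X_n}$, locally, as a finite direct sum of copies of $\Omega^i_{X_1}$ and $\Omega^{i-1}_{X_1}$, and CM is a pointwise condition so locality is harmless), and the induction on $n$ for $\sKM_{r,X_n}$ with base case Kerz's Gersten resolution \cite{Ke}. But there is a genuine gap where you declare the remaining base cases ``comparatively routine since they are locally free over a regular ring.'' The sheaves $Z^r_{X_1}$, $B^r_{X_1}$ and $\sH^r_{X_1}$ are \emph{not} locally free $\sO_{X_1}$-modules; they are not $\sO_{X_1}$-modules at all, because the absolute differential $d$ is not $\sO_{X_1}$-linear, so $\ker d$, $\im d$ and $Z^r/B^r$ are merely sheaves of $\Q$-vector spaces (e.g.\ $\sH^0_{X_1}$ is essentially a constant sheaf). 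This is exactly where the real content of the theorem sits, and the paper needs two inputs your proposal never invokes: (a) the Bloch--Ogus theorem \cite{BO} that the de Rham cohomology sheaves $\sH^r_{X_1}$ are CM---a case of Gersten's conjecture, of the same depth as the result of \cite{Ke} you do cite; and (b) a zig-zag double induction (Propositions~\ref{tr.blogcor} and~\ref{tr.CMclosed}) that plays the two sequences $0\to Z^r_{X_1}\to\Omega^r_{X_1}\to B^{r+1}_{X_1}\to 0$ and $0\to B^r_{X_1}\to Z^r_{X_1}\to\sH^r_{X_1}\to 0$ against each other, shifting both the local-cohomology degree $c$ and the form degree $r$ until the groups vanish for trivial reasons. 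Without this, your induction on $n$ for $Z^r_{X_n}$ has no base case.

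There is also a logical slip in your last step: in $0\to B^r_{X_n}\to Z^r_{X_n}\to\sH^r_{X_n}\to 0$ the sheaf $B^r_{X_n}$ is the \emph{sub}object, not the middle term, so ``extension stability'' (sub and quotient CM implies middle CM) does not apply. Knowing $Z^r_{X_n}$ and $\sH^r_{X_n}$ CM only gives $H^j_x(B^r_{X_n})=0$ for $j\notin\{\codim(x),\codim(x)+1\}$; the long exact sequence leaves $H^{\codim(x)+1}_x(B^r_{X_n})$ as a possibly nonzero quotient of $H^{\codim(x)}_x(\sH^r_{X_n})$, and killing it requires precisely the surjectivity statement of Proposition~\ref{tr.blogcor}. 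The paper sidesteps this in Corollary~\ref{tr.corocm} by proving $B^r_{X_n}$ CM by induction on $n$ through the sequence $0\to\Omega^{r-1}_{X_1}\to B^r_{X_n}\to B^r_{X_{n-1}}\to 0$, where $B^r_{X_n}$ genuinely is the middle term and the base case $B^r_{X_1}$ comes from the zig-zag argument above.
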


We prove the theorem in a series of propositions.

\begin{prop} \label{tr.res1}
\mbox{}
Let $r,n \ge 0$ be integers. The sheaves $\sF=\sKM_{r,X_n}$ and $\sF=\Omega_{X_n}^r$
are CM.
\end{prop}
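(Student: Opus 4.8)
The plan is to prove the CM property for $\sKM_{r,X_n}$ and $\Omega^r_{X_n}$ by induction on $n$, using the two short exact sequences established in Section~\ref{MilnorK} to reduce the non-reduced case to the smooth reduced scheme $X_1$. The base case $n=1$ is the assertion that $\sKM_{r,X_1}$ and $\Omega^r_{X_1}$ are CM on the smooth $k$-scheme $X_1$; for $\Omega^r_{X_1}$ this is the standard Cohen--Macaulayness of the locally free sheaf of K\"ahler differentials over a regular base, and for the Milnor $K$-sheaf this is the Gersten resolution (Cohen--Macaulayness) established by Kerz in~\cite{Ke}, applied to the regular local rings of $X_1$. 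I would record these two inputs explicitly at the start.

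For the inductive step I would exploit the short exact sequences
\eq{plan.ses}{0\to \Omega^{r-1}_{X_1}\xrightarrow{a} Z^r_{X_n}\to Z^r_{X_{n-1}}\to 0}
for differentials (Lemma~\ref{lem1}, with surjectivity of $f$ proved in Proposition~\ref{lem3}) together with the pointwise cohomology long exact sequence. More to the point, for the sheaf $\Omega^r_{X_n}$ itself I would use the kernel-cokernel filtration coming from multiplication by powers of $t$ and by $dt$: concretely, the identification $X_n\cong X_1\times_k S_n$ from Lemma~\ref{mil.lem4} gives a decomposition of $\Omega^r_{X_n}$ into a finite direct sum of copies of $\Omega^j_{X_1}$ (tensored with the one-dimensional $k$-pieces $t^ik[t]/t^nk[t]$ and the $dt$-factor). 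Since the CM property is closed under finite direct sums and is detected by the vanishing of $H^i_x$ away from the codimension, and since $X_n$ and $X_1$ have the same underlying topological space with the same codimension function, this immediately reduces CMness of $\Omega^r_{X_n}$ to that of the various $\Omega^j_{X_1}$. For $\sKM_{r,X_n}$ I would instead run the induction directly: the short exact sequence~\eqref{3} of Lemma~\ref{lem2},
\eq{plan.milses}{\Omega^{r-1}_{X_1}\xrightarrow{b}\sKM_{r,X_n}\to\sKM_{r,X_{n-1}}\to 0,}
whose injectivity on the left is supplied by the cartesian square of Proposition~\ref{lem3}, expresses $\sKM_{r,X_n}$ as an extension of $\sKM_{r,X_{n-1}}$ by $\Omega^{r-1}_{X_1}$; feeding the local cohomology functors $H^*_x(X_n,-)$ into the long exact sequence and invoking the inductive hypothesis that the two outer sheaves are CM forces $H^i_x(X_n,\sKM_{r,X_n})=0$ for $i\neq\codim(x)$.

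The key mechanism throughout is that an extension of two CM sheaves (with respect to the same codimension function) is again CM: if $0\to \sF'\to\sF\to\sF''\to 0$ is exact and $\sF',\sF''$ are CM, then the long exact sequence in local cohomology at each point $x$ sandwiches $H^i_x(\sF)$ between $H^i_x(\sF')$ and $H^i_x(\sF'')$, both of which vanish for $i\neq\codim(x)$. I expect the main obstacle to be bookkeeping rather than conceptual: one must be careful that the local cohomology groups are computed on $X_n$ but the sheaf $\Omega^{r-1}_{X_1}$ is supported scheme-theoretically on the closed subscheme $X_1$, so I would verify that $H^i_x(X_n,i_{X_1,*}\sG)\cong H^i_x(X_1,\sG)$ for a sheaf $\sG$ on $X_1$ (this is the standard fact that local cohomology is insensitive to a closed immersion, the codimension of $x$ being the same whether measured in $X_n$ or $X_1$ since they share the same topological space). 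Once this compatibility is in place, the induction runs cleanly, and the remaining sheaves $\sH^r_{X_1}, Z^r_{X_n}, B^r_{X_n}$ of Theorem~\ref{thm:CM} are then handled by the analogous extension arguments using the exact sequences relating $B^r$, $Z^r$ and $\sH^r$, which I would treat in the subsequent propositions.
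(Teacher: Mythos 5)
Your proposal is correct and matches the paper's proof: the base case $n=1$ rests on the same two inputs (\cite{Ha} for $\Omega^r_{X_1}$, \cite{Ke} for $\sKM_{r,X_1}$), and the inductive step for $\sKM_{r,X_n}$ is exactly the paper's argument, namely the local-cohomology long exact sequence attached to the short exact sequence \eqref{sec1.shortex}, using that local cohomology at a point is insensitive to whether one regards the sheaves on $X_n$ or on the homeomorphic space $X_1$. The only cosmetic divergence is that for $\Omega^r_{X_n}$ the paper runs the same induction rather than your local direct-sum decomposition via the splitting of Lemma~\ref{mil.lem4} (both work; just note that the absolute forms $\Omega^r_{X_1}$ are not finite locally free when $\Omega^1_{k/\Q}\neq 0$, so their CM property should be deduced from the finite filtration with graded pieces $\Omega^{r-s}_{X_1/k}\otimes_k\Omega^s_{k/\Q}$ — direct sums of locally free sheaves — rather than from literal local freeness).
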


\begin{proof}
For the sheaf  $\sF=\Omega_{X_1}^r$ see \cite[p.~239]{Ha}. For the sheaf  $\sF=\sKM_{r,X_1}$ see
\cite{Ke}.
We prove that  $\sF=\sKM_{r,X_n}$ is CM by induction on $n$. The case of the sheaf
$\sF= \Omega_{X_n}^r$ works similarly. For $n>1$ we get from the exact sequence  \eqref{sec1.shortex} an exact sequence
\[
 H_x^i(X_1, \Omega^r_{X_1})  \to H_x^i(X_1, \sKM_{r,X_n})  \to H_x^i(X_1, \sKM_{r,X_{n-1}}) .
\]
For $i \ne \codim(x)$ we already know that the groups on the left and the right side
vanish, so does the group in the middle.
\end{proof}

\medskip
Next we study the sheaf $Z^r_{X_1}$.

\begin{prop}\label{tr.blogcor}
For any $r\ge 0$ and  $x\in X_1$ the map
\[
 H^c_x(X_1,B^r_{X_1} ) \xrightarrow{} H^c_x(X_1,Z^r_{X_1} ) 
\]
induced by the inclusion
 $B^r_{X_1} \subset Z^r_{X_1}$
is surjective for $c\ne \codim(x) $ and injective for $c\ne \codim(x)+1$.
\end{prop}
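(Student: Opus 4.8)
The plan is to reduce both one-sided statements to a single cohomological vanishing for the de Rham cohomology sheaf $\sH^r_{X_1}$, and then to recognize that vanishing as a Cohen--Macaulay property. Fix $x\in X_1$ and set $d=\codim(x)$. I would start from the tautological short exact sequence of Zariski sheaves $0\to B^r_{X_1}\to Z^r_{X_1}\to \sH^r_{X_1}\to 0$ attached to the absolute de Rham complex, and pass to the long exact sequence of local cohomology with support in $x$. In it, the map $H^c_x(X_1,B^r_{X_1})\to H^c_x(X_1,Z^r_{X_1})$ induced by the inclusion has cokernel injecting into $H^c_x(X_1,\sH^r_{X_1})$ and kernel a quotient of $H^{c-1}_x(X_1,\sH^r_{X_1})$. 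Hence the asserted surjectivity for $c\neq d$ and injectivity for $c\neq d+1$ both follow at once from the single claim
\[
H^c_x(X_1,\sH^r_{X_1})=0 \quad\text{for } c\neq d,
\]
that is, from the assertion that $\sH^r_{X_1}$ is CM. Since the long exact sequence also yields the converse, the proposition is in fact \emph{equivalent} to this vanishing; it is precisely the part of Theorem~\ref{thm:CM} concerning $\sH^r_{X_1}$, packaged in the shape needed later for the d\'evissage of the sheaves on $X_n$.

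The real task is therefore to show that the de Rham cohomology sheaf $\sH^r_{X_1}$ is CM. By Proposition~\ref{tr.bascm} this is equivalent to acyclicity of its coniveau complex, which is the Gersten resolution for absolute de Rham cohomology on the smooth $k$-scheme $X_1$. I would establish it by purity: the presheaf $U\mapsto H^*_{\dR}(U/\Q)$ on the small Zariski site of $X_1$ underlies a cohomology theory with supports that is homotopy invariant and semipure, and for such a theory the Bloch--Ogus argument makes the cohomology presheaves effaceable, so the coniveau spectral sequence degenerates and every $\sH^r_{X_1}$ is CM.

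The main obstacle is exactly this purity input, because $X_1$ is smooth only over $k$ whereas $\Omega^\bullet_{X_1/\Q}$ is absolute with coefficients $\Omega^\bullet_{k/\Q}$ of possibly infinite rank. To circumvent it I would filter $\Omega^\bullet_{X_1/\Q}$ by the number of legs along $k/\Q$, with graded pieces $\Omega^p_{k/\Q}\otimes_k\Omega^{\bullet-p}_{X_1/k}$, thereby reducing effaceability to the relative de Rham theory of $X_1/k$, a genuine Bloch--Ogus theory over the field $k$. The decisive simplification is that the relative degree is bounded by $\dim_k X_1$, so for each fixed $r$ the induced filtration on $\sH^r_{X_1}$ has only finitely many steps; since $H^*_x(-)$ is exact in the flat $k$-linear factor $\Omega^p_{k/\Q}$ and CM-ness is stable under the finitely many extensions assembling the filtration, CM-ness of the relative de Rham cohomology sheaves propagates to $\sH^r_{X_1}$. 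I expect the delicate point to be checking that the Gauss--Manin differentials of this filtration again have CM kernels and cokernels, i.e.\ that passing to the filtration does not destroy the CM property; once $\sH^r_{X_1}$ is known to be CM, the first paragraph closes the argument formally.
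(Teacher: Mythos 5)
Your first paragraph is exactly the paper's proof. The paper takes the long exact local-cohomology sequence attached to $0\to B^r_{X_1}\to Z^r_{X_1}\to \sH^r_{X_1}\to 0$ and quotes Bloch--Ogus \cite{BO} for the statement that $\sH^r_{X_1}$ is CM; both halves of the proposition then fall out precisely as you describe, and your remark that the proposition is in fact \emph{equivalent} to the vanishing $H^c_x(X_1,\sH^r_{X_1})=0$ for $c\ne\codim(x)$ is also correct. Had you stopped there and cited \cite{BO} for that vanishing, your proof and the paper's would coincide.

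The part where you go beyond the paper --- trying to prove that the absolute de Rham cohomology sheaf $\sH^r_{X_1}$ is CM --- has a genuine gap, and it is the one you flag yourself; but it is not a ``delicate point'' to be checked, it is the crux, and the step as you formulate it is false in general. The filtration of $\Omega^\bullet_{X_1}$ by the number of legs along $k/\Q$ induces on $\sH^r_{X_1}$ a finite filtration whose graded pieces are \emph{not} the sheaves $\Omega^p_{k/\Q}\otimes_k\sH^{r-p}_{X_1/k}$: they are the $E_\infty$-terms of the Katz--Oda spectral sequence, i.e.\ subquotients of those sheaves under the Gauss--Manin differentials. CM-ness does pass through extensions (long exact sequence) and through $\otimes_k V$ for a $k$-vector space $V$ (local cohomology commutes with direct sums), but it does not pass to kernels or cokernels: on $X_1=\A^2_k$, multiplication by a coordinate $t_1$ is a map of CM sheaves $\sO_{X_1}\to\sO_{X_1}$ whose cokernel $\sO_{V(t_1)}$ has nonvanishing local cohomology at the origin in degree $1$ rather than in degree $2=\codim(x)$. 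Since the Gauss--Manin differentials are nonzero in general, nothing propagates, and completing your route would require genuinely new input about those differentials. The workable repair is to run the d\'evissage on the cohomology \emph{theory} rather than on the sheaf: verify the Bloch--Ogus axioms (\'etale excision, homotopy invariance, purity for smooth pairs) directly for the absolute theory $U\mapsto \H^*(U,\Omega^\bullet_{U})$. At that level your filtration is harmless, because flatness of $\Omega^p_{k/\Q}$ over $k$ identifies (hyper)cohomology with supports of each graded piece with $\Omega^p_{k/\Q}\otimes_k(\text{relative groups})$, and a filtered map that is an isomorphism on $E_1$-terms is an isomorphism on abutments; so purity for the absolute theory follows from relative purity for $X_1/k$ without ever forming kernels or cokernels of sheaf maps. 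The coniveau/effaceability machine then yields the Gersten resolution, hence CM-ness, of $\sH^r_{X_1}$ in one stroke; this is essentially the route behind the fact the paper cites from \cite{BO}.
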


\begin{proof}
The short exact sequence of sheaves
\[
0 \to B^r_{X_1} \to Z^r_{X_1} \to ,\sH^r_{X_1} \to 0
\]
induces a long exact cohomology sequence
\[
 \cdots\to  H^{c-1}_x(X_1,\sH^r_{X_1} ) \to H^c_x(X_1,B^r_{X_1} ) \to H^c_x(X_1,Z^r_{X_1} )
   \xrightarrow{} H^c_x(X_1,\sH^r_{X_1} ) \to \cdots
\]
On the other hand, we known from  \cite{BO} that the sheaves $\sH^r_{X_1}$ are CM. 
This shows the proposition.
\end{proof}

\begin{prop}\label{tr.CMclosed}
The sheaves $B^*_{X_1}, Z^*_{X_1}$ are CM. 
\end{prop}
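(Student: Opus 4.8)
The plan is to fix a point $x\in X_1$, set $d=\codim(x)$, and recall that $H^i_x(X_1,\sF)$ is the local cohomology of the stalk $\sF_x$, viewed as a module over the regular local ring $R=\sO_{X_1,x}$ of dimension $d$, with support in its maximal ideal $\mathfrak m$. Under this dictionary the CM condition becomes $H^i_{\mathfrak m}(\sF_x)=0$ for $i\neq d$. First I would dispose of the range $i>d$ once and for all: since $Z^r_{X_1}$ and $B^r_{X_1}$ are subsheaves of $\Omega^r_{X_1}$, their stalks are $R$-modules, and Grothendieck's vanishing theorem gives $H^i_{\mathfrak m}(M)=0$ for \emph{every} $R$-module $M$ and every $i>\dim R=d$. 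Hence for all the sheaves in question the CM property reduces to the lower vanishing $H^i_x=0$ for $i<d$.

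The crux is the lower vanishing for $Z^r_{X_1}$, which I would extract from the truncated absolute de Rham complex
\[ K^\bullet \;=\; \bigl(\Omega^r_{X_1}\xrightarrow{d}\Omega^{r+1}_{X_1}\xrightarrow{d}\Omega^{r+2}_{X_1}\to\cdots\bigr), \]
placed in cohomological degrees $0,1,2,\dots$ (a bounded-below complex, so both hyper-local-cohomology spectral sequences with support at $x$ converge). Its terms are CM by Proposition~\ref{tr.res1}, and its cohomology sheaves are $\mathcal H^0(K^\bullet)=Z^r_{X_1}$ together with $\mathcal H^q(K^\bullet)=\sH^{r+q}_{X_1}$ for $q\ge 1$, all CM by \cite{BO}. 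In the first spectral sequence, $'E_1^{p,q}=H^q_x(\Omega^{r+p}_{X_1})$, every term with $q\neq d$ vanishes, so it collapses onto the single row $q=d$ and forces $\mathbb{H}^n_x(K^\bullet)=0$ for $n<d$. In the second, $''E_2^{p,q}=H^p_x(\mathcal H^q(K^\bullet))$, the bottom-row term $''E_2^{n,0}=H^n_x(Z^r_{X_1})$ with $n<d$ receives and emits no nonzero differential: every potential source $''E_2^{n-s,s-1}=H^{n-s}_x(\sH^{r+s-1}_{X_1})$ with $s\ge 2$ vanishes because $n-s<d$, and the targets lie in negative second index. Thus $''E_2^{n,0}=''E_\infty^{n,0}$ is a subquotient of $\mathbb{H}^n_x(K^\bullet)=0$, giving $H^n_x(Z^r_{X_1})=0$ for $n<d$. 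Combined with the previous paragraph, $Z^r_{X_1}$ is CM.

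Finally I would deduce the statement for $B^r_{X_1}$ formally. Applying Proposition~\ref{tr.blogcor} with $Z^r_{X_1}$ now known to be CM, for $c<d$ the map $H^c_x(B^r_{X_1})\to H^c_x(Z^r_{X_1})=0$ is injective (as $c\neq \codim(x)+1$), whence $H^c_x(B^r_{X_1})=0$; together with Grothendieck vanishing for $c>d$ this proves $B^r_{X_1}$ is CM. The one genuinely delicate point is the spectral sequence comparison of the middle paragraph. Its payoff is that having all terms \emph{and} all cohomology sheaves of the de Rham complex CM is exactly enough to force the intermediate cycle sheaves $Z^r_{X_1}$ to be CM, without ever having to control the top local cohomology $H^d_x$ — which is precisely where a naive induction on $r$ through $0\to Z^{r}_{X_1}\to\Omega^{r}_{X_1}\to B^{r+1}_{X_1}\to0$ would get stuck, since the injectivity of $H^d_x(Z^r_{X_1})\to H^d_x(\Omega^r_{X_1})$ is equivalent to the very vanishing one is trying to prove.
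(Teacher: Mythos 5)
Your core spectral-sequence argument (the two hypercohomology spectral sequences of the truncated de Rham complex, forcing $H^n_x(X_1,Z^r_{X_1})=0$ for $n<\codim(x)$) is correct, and it is a genuinely different mechanism from the paper's. But the step where you dispose of the range $i>\codim(x)$ rests on a false claim: $Z^r_{X_1}$ and $B^r_{X_1}$ are \emph{not} $\sO_{X_1}$-submodules of $\Omega^r_{X_1}$, because the exterior derivative is only $\Q$-linear; if $\omega$ is closed, $f\omega$ is in general not. So their stalks are not $R$-modules, the opening ``dictionary'' identifying $H^i_x(X_1,\sF)$ with module-theoretic local cohomology $H^i_{\mathfrak m}(\sF_x)$ applies only to quasi-coherent sheaves such as $\Omega^r_{X_1}$, and the module form of Grothendieck vanishing cannot be invoked for $Z^r_{X_1}$, $B^r_{X_1}$. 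This matters: your second spectral sequence genuinely cannot control $H^n_x(Z^r_{X_1})$ for $n>\codim(x)$, since the incoming differentials from $H^{n-s}_x(X_1,\sH^{r+s-1}_{X_1})$ need not vanish when $n-s=\codim(x)$, so your proof really needs the high-degree vanishing as a separate input. The gap is patchable: for an \emph{arbitrary} abelian sheaf $\sF$ one still has $H^i_x(X_1,\sF)=0$ for $i>\codim(x)$, because $H^i_x$ is by definition the filtered colimit over neighborhoods, cohomology of Noetherian spectral spaces commutes with this limit, and on the local scheme $\Spec \sO_{X_1,x}$ (a Noetherian space of dimension $\codim(x)$, with punctured spectrum of dimension $\codim(x)-1$) the Tohoku vanishing theorem plus the long exact sequence of local cohomology give the claim. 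With that substitution your proof is complete.

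It is worth comparing with the paper's route, which needs no such topological input. The paper interleaves the two exact sequences $0\to Z^r_{X_1}\to\Omega^r_{X_1}\to B^{r+1}_{X_1}\to 0$ and $0\to B^r_{X_1}\to Z^r_{X_1}\to\sH^r_{X_1}\to 0$ (the latter through Proposition~\ref{tr.blogcor}) to produce chains of identifications: for $c<\codim(x)$ the chain decreases the cohomological degree until it becomes negative, and for $c>\codim(x)$ it decreases the form degree until the sheaves $Z^{r-s}_{X_1}$, $B^{r-s}_{X_1}$ vanish for $s>r$. In particular, contrary to your closing remark, the induction through these sequences does not get stuck: the paper never needs injectivity of $H^d_x(Z^r_{X_1})\to H^d_x(\Omega^r_{X_1})$, and its self-terminating descending chain in $r$ is exactly what replaces your appeal to Grothendieck vanishing above $\codim(x)$. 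Your approach buys a clean one-shot conceptual statement (CM-ness of all terms and all cohomology sheaves of the truncated de Rham complex forces CM-ness of the cycle sheaves), at the price of an extra, correctly-justified, vanishing theorem; the paper's bootstrap is more elementary and entirely internal to the exact sequences already at hand.
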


\begin{proof}
 Using the exact sequence
\[
0 \to Z^r_{X_1} \to \Omega^r_{X_1} \to B^{r+1}_{X_1} \to 0
\]
and  Proposition~\ref{tr.res1} we get 
\eq{tr.eqvan2}{
 H^c_x(X_1,Z^{r}_{X_1} )= H^{c-1}_x(X_1,B^{r+1}_{X_1} )  \quad \text{ for }  c \notin\{
 \codim(x) , \codim(x)+1\}.
}
Combining Proposition~\ref{tr.blogcor} and \eqref{tr.eqvan2} we get for $c> \codim(x)$ the identifications
\begin{align*}
H^c_x(X_1,Z^r_{X_1} ) &\leftarrow H^c_x(X_1,B^r_{X_1} )  = H^{c+1}_x(X_1 , Z^{r-1}_{X_1})\\
&= H^{c+1}_x(X_1,B^{r-1}_{X_1} )=  H^{c+2}_x(X_1 , Z^{r-2}_{X_1})    = \cdots =0
\end{align*}
where the first arrow surjective
and for $c< \codim(x)$ the identifications
\begin{align*}
H^c_x(X_1 , B^r_{X_1} )&= H^c_x(X_1,Z^r_{X_1} ) = H^{c-1}_x(X_1,B^{r+1}_{X_1} ) =H^{c-1}_x(X_1,Z ^{r+1}_{X_1} ) \\
&= H^{c-2}_x(X_1,B^{r+2}_{X_1} ) = H^{c-2}_x(X_1,Z^{r+2}_{X_1} )   =\cdots  = 0 .
\end{align*}
\end{proof}

\begin{cor}\label{tr.corocm}
For all $n\ge 1$ the sheaves $B^*_{X_n}, Z^*_{X_n}$ are CM. 
\end{cor}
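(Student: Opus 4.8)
The plan is to prove the whole corollary by a single induction on $n$, using $n=1$ (Proposition~\ref{tr.CMclosed}) as the base case and feeding in the fact that $\Omega^{r-1}_{X_1}$ is CM (Proposition~\ref{tr.res1}) as the input sheaf. The point is that both the closed and the exact forms fit into short exact sequences with $\Omega^{r-1}_{X_1}$ that lower the level $n$, so that no new degree-chasing of the kind in Proposition~\ref{tr.CMclosed} is needed at higher levels.

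First I would treat $Z^r_{X_n}$. The bottom row of \eqref{sec1.shortex} is the short exact sequence
\[
0 \to \Omega^{r-1}_{X_1} \xrightarrow{a} Z^r_{X_n} \to Z^r_{X_{n-1}} \to 0,
\]
whose surjectivity is exactly the surjectivity of $f$ proved in Proposition~\ref{lem3}. Assuming inductively that $Z^r_{X_{n-1}}$ is CM, the local cohomology long exact sequence
\[
H^i_x(X_1, \Omega^{r-1}_{X_1}) \to H^i_x(X_1, Z^r_{X_n}) \to H^i_x(X_1, Z^r_{X_{n-1}})
\]
has both outer terms zero for every $i\ne\codim(x)$, and hence the middle one vanishes too; so $Z^r_{X_n}$ is CM.

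The key observation for the exact forms is that the map $a$ already factors through $B^r_{X_n}$: for a lift $\widetilde\eta\in\Omega^{r-1}_{X_n}$ of $\eta$ one computes $d(t^{n-1}\widetilde\eta)=(n-1)t^{n-2}dt\wedge\widetilde\eta+t^{n-1}d\widetilde\eta = a(\eta)$, using $t^n=t^{n-1}dt=0$ on $X_n$, so $\im(a)\subseteq B^r_{X_n}$. Since $B^r_{X_n}\subseteq Z^r_{X_n}$ and the restriction $B^r_{X_n}\to B^r_{X_{n-1}}$ is surjective (because $\Omega^{r-1}_{X_n}\surj\Omega^{r-1}_{X_{n-1}}$ and $d$ commutes with restriction), intersecting the sequence above with $B^r_{X_n}$ gives
\[
0 \to \Omega^{r-1}_{X_1} \xrightarrow{a} B^r_{X_n} \to B^r_{X_{n-1}} \to 0,
\]
the exactness in the middle following from $\ker(B^r_{X_n}\to B^r_{X_{n-1}})=B^r_{X_n}\cap\im(a)=\im(a)$. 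Then the identical induction applies, with base case $B^r_{X_1}$ CM from Proposition~\ref{tr.CMclosed}, forcing $H^i_x(X_1,B^r_{X_n})=0$ for $i\ne\codim(x)$.

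The only genuinely new point, and thus the main obstacle, is recognizing that $\im(a)$ consists of \emph{exact} forms, which upgrades the $Z$-sequence to a parallel $B$-sequence; everything else is the same two-out-of-three local cohomology argument. In particular this is what lets us avoid re-running the delicate ascending/descending chain of Proposition~\ref{tr.CMclosed} at each level $n$, and it is also where one should double-check that the restriction map on exact forms is genuinely surjective rather than merely having the expected image.
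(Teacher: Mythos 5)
Your proof is correct, and structurally it is the paper's proof: the same induction on $n$, driven by the two parallel short exact sequences
\[
0 \to \Omega^{r-1}_{X_1} \to B^r_{X_n} \to B^r_{X_{n-1}} \to 0, \qquad
0 \to \Omega^{r-1}_{X_1} \to Z^r_{X_n} \to Z^r_{X_{n-1}} \to 0,
\]
with CM outer terms (Proposition~\ref{tr.res1} for the kernel, Proposition~\ref{tr.CMclosed} for the base case $n=1$) and the two-out-of-three vanishing of local cohomology. The only divergence is how the $B$-sequence is justified. You identify its kernel by exhibiting an explicit primitive: $a(\eta)=d(t^{n-1}\tilde\eta)$ for any lift $\tilde\eta$ of $\eta$, so $\im(a)\subseteq B^r_{X_n}$ and hence $\ker(B^r_{X_n}\to B^r_{X_{n-1}})=B^r_{X_n}\cap\im(a)=\im(a)\cong\Omega^{r-1}_{X_1}$. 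The paper instead quotes Lemma~\ref{mil.lem4}: since $\Omega^*_{X_n}\to\Omega^*_{X_1}$ is a quasi-isomorphism, the maps on de Rham cohomology sheaves $\sH^r_{X_n}\to\sH^r_{X_{n-1}}$ are isomorphisms, and comparing the two inclusions $B\subset Z$ (snake lemma) transports the kernel $\Omega^{r-1}_{X_1}$ of the $Z$-sequence to the $B$-sequence; both arguments use the trivial surjectivity of $B^r_{X_n}\to B^r_{X_{n-1}}$. Your route is more elementary and self-contained, needing nothing beyond Lemma~\ref{lem1}, whereas the paper's recycles a lemma it has already proved; in effect, your observation that $a$ factors through the exact forms is exactly what the paper's homological argument establishes implicitly, so the two proofs differ only in the bookkeeping of that single step.
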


\begin{proof}
The sheaf $Z^*_{X_n}$ is CM by  Propositions \ref{lem3}, \ref{tr.res1} and
\ref{tr.CMclosed}.
Using Lemma~\ref{mil.lem4}
 and that $B^r_{X_n}$ trivially surjects onto $B^r_{X_{n-1}}$,
 we get a commutative diagram of short exact sequences
\eq{}{
\xymatrix{  0 \ar[r] & \Omega^{r-1}_{X_1} \ar[r] \ar@{=}[d] & B^r_{X_n} \ar[r] \ar[d] &
  B^r_{X_{n-1}} \ar[r] \ar[d] & 0 \\
0 \ar[r] &  \Omega^{r-1}_{X_1} \ar[r] &  Z^r_{X_n} \ar[r] & Z^r_{X_{n-1}} \ar[r] & 0
}}
which implies that $B^*_{X_n}$ is CM.
\end{proof}

\section{Transfer map}

\noindent
For any algebraic scheme $V$, it is natural to define the (cohomological) Chow groups 
\eq{}{\CH^p(V) := H^p(V, \sKM_p).
}
In this way, the graded object $\CH^*(V)$ is automatically a ring, contravariant in $V$.
For regular excellent $V$ the cohomology group $\CH^i(V)$ coincides with the usual Chow group of
codimension $i$ cylces on $V$ by \cite{Ke}.
 We would hope there should exist covariant transfer maps $\CH^*(V) \xrightarrow{f_*} \CH^*(W)$ of graded degree $-d$ for $f:V \to W$ smooth and proper with fiber dimension $d$.  One might further hope for $f$ proper and $d= \dim V -\dim W$ that there should exist a functorial map of coniveau complexes $\C(\sKM_{V,r}) \xrightarrow{f_*} \C(\sKM_{W,r-d})[-d]$ such that when $\sKM_{W,r-d}$ is CM one could define a covariant transfer via $H^p(V,\sKM_p) \to H^p(\C(\sKM_{p,V})) \xrightarrow{f_*} 
H^{p-d}(\C(\sKM_{W,p-d})) \cong H^{p-d}(W,\sKM_{W,p-d})$. 

In what follows we use results
from the previous section, together with work of Rost \cite{R} and Grothendieck \cite{Ha},
to define a transfer 
\[
f_* : \CH^i(X_n) \to \CH^{i-d}(Y_n),
\]
 for $f: X_n \to Y_n$ a smooth proper morphism of relative dimension $d$ between smooth schemes over $S_n$. This suffices to define a calculus of correspondences on $\CH^*(X_n)$, which is what we will need.

We use  the fiber square in Proposition~\ref{7} to `glue' the constructions of Rost and Grothendieck. From Propositions \ref{7} and \ref{tr.res1} and from Corollary \ref{tr.corocm} we obtain:

\begin{lem}\label{tr.lemcar2}
There is a Cartesian square of complexes
\[
\xymatrix{
\sC(\sKM_{r,X_n}) \ar[r] \ar[d]  & \sC(\sKM_{r,X_1}) \ar[d]  \\ 
 \sC(Z^r_{X_n})     \ar[r] & \sC(Z^r_{X_1}) 
}
\]
\end{lem}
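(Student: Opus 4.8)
The plan is to verify that the square is Cartesian in the category of complexes, which, since limits of complexes are computed degreewise, amounts to checking that it is Cartesian in each cohomological degree $p$. The coniveau complex in degree $p$ is the direct sum $\bigoplus_{x\in X_1^{(p)}} i_{x,*}H^p_x(X_1,\sF)$ taken over the \emph{same} index set $X_1^{(p)}$ for all four sheaves $\sF$, and the transition maps are induced functorially by the sheaf morphisms, hence respect this indexing. So I am reduced to showing that for every point $x$ with $\codim(x)=c$ the square of local cohomology groups (writing $H^c_x(-)$ for $H^c_x(X_1,-)$)
\[
\begin{CD}
H^c_x(\sKM_{r,X_n}) @>>> H^c_x(\sKM_{r,X_1}) \\
@VVV @VVV \\
H^c_x(Z^r_{X_n}) @>>> H^c_x(Z^r_{X_1})
\end{CD}
\]
is Cartesian. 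Here I use that all four sheaves are CM (Proposition~\ref{tr.res1} and Corollary~\ref{tr.corocm}), so that $H^i_x(X_1,\sF)$ vanishes for $i\neq c$ and only the top local cohomology survives in the relevant degree.

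The key input is Proposition~\ref{lem3}, asserting that the sheaf square \eqref{7} is Cartesian, i.e.\ $\sKM_{r,X_n}=\sKM_{r,X_1}\times_{Z^r_{X_1}}Z^r_{X_n}$. I would combine this with the surjectivity of the restriction $Z^r_{X_n}\to Z^r_{X_1}$ (obtained by composing the surjections in the bottom rows of \eqref{sec1.shortex}) to upgrade the Cartesian square to a Mayer--Vietoris short exact sequence of Zariski sheaves
\[
0\to \sKM_{r,X_n}\to \sKM_{r,X_1}\oplus Z^r_{X_n}\xrightarrow{\ d\log-\mathrm{res}\ } Z^r_{X_1}\to 0 .
\]
All of these sheaves are CM, and a finite direct sum of CM sheaves is again CM, so applying $H^*_x(X_1,-)$ at a point $x$ of codimension $c$ collapses the associated long exact sequence: every term in degree $\neq c$ vanishes, leaving the short exact sequence
\[
0\to H^c_x(\sKM_{r,X_n})\to H^c_x(\sKM_{r,X_1})\oplus H^c_x(Z^r_{X_n})\to H^c_x(Z^r_{X_1})\to 0 .
\]
This is exactly the statement that the local square above is (bi)Cartesian.

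Assembling these Cartesian local squares over all $x\in X_1^{(c)}$ in each degree $c$ yields the degreewise Cartesian property, and hence the Cartesian square of complexes claimed in the lemma. The only genuinely nontrivial point is the collapse of the local cohomology long exact sequence to a short exact one; this is where the Cohen--Macaulay results of the previous section (Theorem~\ref{thm:CM}) do the real work, preserving the fiber square of Proposition~\ref{lem3} under the otherwise inexact operation of taking local cohomology. Everything else is formal, so I do not anticipate a serious obstacle beyond bookkeeping with the functoriality of the coniveau complex.
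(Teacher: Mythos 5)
Your proof is correct and follows exactly the route the paper intends: the paper deduces this lemma directly from the Cartesian sheaf square of Proposition~\ref{lem3} together with the CM statements of Proposition~\ref{tr.res1} and Corollary~\ref{tr.corocm}, and your argument (Mayer--Vietoris short exact sequence of sheaves, collapse of the local cohomology long exact sequence via CM-ness, degreewise assembly) is precisely the unwinding of that citation. No discrepancy to report.
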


Furthermore we get:

\begin{lem}\label{tr.lem12} We have a left-exact sequence
\[
0 \to \sC(Z^r_{X_n}) \to \sC(\Omega^r_{X_n}) \xrightarrow{d} \sC(\Omega^{r+1}_{X_n}) .
\]
\end{lem}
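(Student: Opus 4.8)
The plan is to establish the left-exactness by reducing everything to a statement about the de Rham differential $d$ on sheaves over $X_n$, and then to feed this into the coniveau complexes using the Cohen--Macaulay property already secured in Theorem~\ref{thm:CM}. The key underlying fact is the sheaf-theoretic exactness
\[
0 \to Z^r_{X_n} \to \Omega^r_{X_n} \xrightarrow{d} \Omega^{r+1}_{X_n},
\]
which holds essentially by definition of $Z^r_{X_n}$ as the kernel of $d$ on absolute $r$-forms. So the statement to prove is that applying the functor $\sC(-)$ preserves this left-exactness.

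First I would recall that for a CM sheaf $\sF$, Proposition~\ref{tr.bascm} tells us that $\sC(\sF)$ is an \emph{acyclic resolution} of $\sF$; in particular the augmentation $\sF \to \sC^0(\sF)$ is injective and $\sC(\sF)$ has no higher cohomology. By Theorem~\ref{thm:CM} all three sheaves $Z^r_{X_n}$, $\Omega^r_{X_n}$, $\Omega^{r+1}_{X_n}$ are CM, so each is computed by its own acyclic coniveau resolution. The essential point is that the assignment $x \mapsto H^c_x(X_1, \sF)$ is functorial in $\sF$, so the morphism of sheaves $d: \Omega^r_{X_n} \to \Omega^{r+1}_{X_n}$ and the inclusion $Z^r_{X_n} \hookrightarrow \Omega^r_{X_n}$ induce term-by-term maps of coniveau complexes, giving the displayed sequence of complexes at all.

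Next I would run the standard local-cohomology argument degree by degree. Fix a scheme point $x \in X_1$ of codimension $c$. Since each sheaf is CM, the only surviving local cohomology is in degree $c$, and the short exact sheaf sequence $0 \to Z^r_{X_n} \to \Omega^r_{X_n} \to B^{r+1}_{X_n} \to 0$ (with $B^{r+1}_{X_n}$ also CM by Corollary~\ref{tr.corocm}) yields a long exact sequence of the $H^\bullet_x$ which, by CM-vanishing, collapses to a short exact sequence
\[
0 \to H^c_x(X_1, Z^r_{X_n}) \to H^c_x(X_1, \Omega^r_{X_n}) \to H^c_x(X_1, B^{r+1}_{X_n}) \to 0.
\]
Summing over $x$ of each codimension and threading through the differentials of the coniveau complexes, one obtains that $\sC(Z^r_{X_n})$ is exactly the kernel of $\sC(d): \sC(\Omega^r_{X_n}) \to \sC(\Omega^{r+1}_{X_n})$ in each degree, which is precisely the asserted left-exactness. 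The image of $\sC(d)$ lands in $\sC(B^{r+1}_{X_n}) \subset \sC(\Omega^{r+1}_{X_n})$, and the identification $\sC(Z^r_{X_n}) = \ker(\sC(d))$ follows from the injectivity of $Z^r_{X_n} \hookrightarrow \Omega^r_{X_n}$ together with the acyclicity of the resolutions.

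The main obstacle I anticipate is purely formal rather than deep: one must be careful that the coniveau construction $\sC(-)$ commutes with kernels here, which is not automatic for an arbitrary additive functor and relies crucially on the CM-vanishing making all the relevant local cohomology groups concentrate in a single degree. The verification that the term-wise short exact sequences assemble compatibly with the differentials of the coniveau complexes is routine once the CM-concentration is invoked, so I expect the proof to be short and to consist almost entirely of citing Theorem~\ref{thm:CM}, Corollary~\ref{tr.corocm}, and Proposition~\ref{tr.bascm}, followed by this degreewise bookkeeping.
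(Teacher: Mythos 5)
There is a genuine gap, and it sits exactly where the paper spends most of its proof. Your first step is fine and agrees with the paper: the short exact sequence $0 \to Z^r_{X_n} \to \Omega^r_{X_n} \to B^{r+1}_{X_n} \to 0$ of CM sheaves gives, degreewise, short exact sequences of local cohomology, hence a short exact sequence of coniveau complexes, identifying $\sC(Z^r_{X_n})$ with the kernel of $\sC(\Omega^r_{X_n}) \to \sC(B^{r+1}_{X_n})$. But the lemma concerns the kernel of $\sC(d)\colon \sC(\Omega^r_{X_n}) \to \sC(\Omega^{r+1}_{X_n})$, which factors through $\sC(B^{r+1}_{X_n})$, so what you actually need is injectivity of
\[
\sC(B^{r+1}_{X_n}) \to \sC(\Omega^{r+1}_{X_n}).
\]
You assert this by writing $\sC(B^{r+1}_{X_n}) \subset \sC(\Omega^{r+1}_{X_n})$, and your stated justification --- that the identification ``follows from the injectivity of $Z^r_{X_n} \hookrightarrow \Omega^r_{X_n}$ together with the acyclicity of the resolutions'' --- is not valid: sheaf-level injectivity does not survive the functor $\sC(-)$. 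For an injection $\sF \inj \sG$ of CM sheaves and a point $x$ of codimension $c$, the long exact sequence of local cohomology gives
\[
\ker\bigl( H^c_x(X_1,\sF) \to H^c_x(X_1,\sG) \bigr) \cong H^{c-1}_x\bigl(X_1, \sG/\sF\bigr),
\]
so one must control the \emph{cokernel}; here the cokernel is $\Omega^{r+1}_{X_n}/B^{r+1}_{X_n}$, which is not among the sheaves you have verified to be CM.

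This is precisely why the paper's proof factors the inclusion as $B^{r+1}_{X_n} \inj Z^{r+1}_{X_n} \inj \Omega^{r+1}_{X_n}$ and applies the CM argument twice: once to $0 \to Z^{r+1}_{X_n} \to \Omega^{r+1}_{X_n} \to B^{r+2}_{X_n} \to 0$, whose quotient is CM by Corollary~\ref{tr.corocm}, and once to $0 \to B^{r+1}_{X_n} \to Z^{r+1}_{X_n} \to \sH^{r+1}_{X_1} \to 0$, where Lemma~\ref{mil.lem4} identifies the quotient with the de Rham cohomology sheaf of $X_1$, which is CM by Bloch--Ogus (it appears on the list in Theorem~\ref{thm:CM}). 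Your proposal never invokes $\sH^{r+1}_{X_1}$ or Lemma~\ref{mil.lem4}; without them (or the equivalent observation that $\Omega^{r+1}_{X_n}/B^{r+1}_{X_n}$ is an extension of $B^{r+2}_{X_n}$ by $\sH^{r+1}_{X_1}$, hence CM) your degreewise bookkeeping does not close, and the kernel of $\sC(d)$ could a priori be strictly larger than $\sC(Z^r_{X_n})$. The needed ingredients are all available to you in Theorem~\ref{thm:CM}, but the step that uses them is the actual content of the lemma, not a routine verification.
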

\begin{proof}A short-exact sequence of $CM$ sheaves yields a short-exact sequence of
  coniveau complexes. Applying this to the sequence $$0\to Z^r_{X_n} \to \Omega^r_{X_n}
  \to B^{r+1}_{X_n} \to 0,$$ we reduce to showing that the map $\sC(B^{r+1}_{X_n}) \to
  \sC(\Omega^{r+1}_{X_n})$ is injective. By the same logic, we know that
  $\sC(Z^{r+1}_{X_n}) \inj \sC(\Omega^{r+1}_{X_n})$, so it suffices to show
  $\sC(B^{r+1}_{X_n}) \inj \sC(Z^{r+1}_{X_n})$. 
By Lemma~\ref{mil.lem4} we have  an exact sequence
\eq{}{0 \to B^{r+1}_{X_n} \to Z^{r+1}_{X_n} \to \sH^{r+1}_{X_1} \to 0,
}
where $\sH^{r+1}_{X_1}$ is CM.  We conclude that $\sC(B^{r+1}_{X_n}) \inj \sC(Z^{r+1}_{X_n})$, proving the lemma. 
\end{proof}

Let now $f:X_n \to Y_n$ be as above a smooth proper map of relative dimension $d$ between smooth schemes over $S_n$.
Rost constructs  a morphism between complexes \cite{R} which, via the Gysin isomorphisms, is a transfer
\eq{tr.trrost}{
f_* \sC(\sKM_{r,X_1})  \xrightarrow{f_*} \sC(\sKM_{r-d,Y_1})[-d].
}
Grothendieck and Hartshorne construct in \cite{Ha} a morphism between complexes of
$\sO_{Y_n}$-modules
\eq{tr.trgro}{
f_* \sC(\Omega^d_{X_n/Y_n})  \xrightarrow{{\rm Tr}_f} \sC(\sO_{Y_n})[-d].
}
Indeed, using the notation of \cite{Ha} the structure sheaf $\sO_{Y_n}$ is pointwise dualizing, so its
coniveau complex $\sC(\sO_{Y_n})$ is a residual complex and $$f^! \sC(\sO_{Y_n}) =
\sC(\Omega^d_{X_n/Y_n}) [d].$$
Now we consider the composite morphism 
\begin{align}\label{tr.trdiffform}
f_* : & f_* \sC(\Omega^r_{X_n}) \to \Omega^{r-d}_{Y_n} \otimes_{\sO_{Y_n}} f_*
\sC(\Omega^d_{X_n/Y_n})  \xrightarrow{{\rm Tr}_f}\\\notag
 & \Omega^{r-d}_{Y_n}
\otimes_{\sO_{Y_n}} \sC(\sO_{Y_n})[-d] \xrightarrow{\sim} \sC(\Omega^{r-d}_{Y_n})[-d]
\end{align}
where for the first arrow we use  the projection $\Omega^r_{X_n}\to f^*\Omega^{r-d}_{Y_n}\otimes_{\sO_{X_n}} \Omega^d_{X_n/Y_n}$ followed by the projection formula.
Note that the composite map $f_*$ in \eqref{tr.trdiffform} is compatible with the differential.

One directly shows that the transfer map \eqref{tr.trdiffform} for $n=1$  is compatible
with the transfer map \eqref{tr.trrost}  with respect to
the $d\log$ map.
The transfer \eqref{tr.trdiffform} induces  thanks to Lemma~\ref{tr.lem12} a transfer
\eq{tr.grocl}{
\C(Z^r_{X_n})  \xrightarrow{f_*} \C(Z^{r-d}_{Y_n})[-d].
}

So we get a commutative diagram of exact sequences
\[\scriptsize
\xymatrix{
0 \ar[r] & \C(\sKM_{r,X_n})  \ar[r] \ar[d]^{f_*} &  \C(\sKM_{r,X_1}) \oplus  \C(Z^r_{X_n})
\ar[r] \ar[d]^{f_*} & \C(Z^r_{X_1})\ar[d]^{f_*}  \\
0 \ar[r] & \C(\sKM_{r-d,Y_n})[-d]  \ar[r] &  \C(\sKM_{r-d,Y_1})[-d] \oplus  \C(Z^{r-d}_{Y_n})[-d]
\ar[r] & \C(Z^{r-d}_{Y_1})[-d]
}
\]
The left vertical transfer map is defined by this diagram.
The transfer map 
\eq{}{
f_* : \CH^i(X_n) \cong H^i(\C(\sKM_{i,X_n}) ) \to H^{i-d}( \C(\sKM_{i-d,Y_n}) ) \cong \CH^{i-d}(Y_n),
}
obtained using the above construction, Proposition~\ref{tr.bascm} and Theorem~\ref{thm:CM},
 satisfies the usual properties, for example it is compatible with smooth
base change, and we use such properties without further mentioning.

\medskip

For the remaining part of this section let $d$ be the dimension of the smooth,
equidimensional scheme $X_n/S_n$. If one follows the above construction
of the transfer carefully,  one can deduce: 

\begin{prop}\label{tr.action}\mbox{}
\begin{itemize}
\item[(i)]
Composition of correspondences makes $\CH^d(X_n\times_{S_n} X_n)$ a ring with unity for
any $n\ge 1$.
\item[(ii)]
For $n\ge 2$ this ring acts canonically on the long exact cohomology sequence 
\[
\cdots \to  H^{c}(X_1,\Omega^{r-1}_{X_1}) \to H^c(X_n;\sKM_r)  \to H^c(X_{n-1};\sKM_r) \to
H^{c+1}(X_1,\Omega^{r-1}_{X_1})  \to \cdots 
\]
associated to \eqref{sec1.shortex} for any integer $r\ge 0$.
\item[(iii)]
$ \ker[ \CH^d(X_n\times_{S_n} X_n) \to \CH^d(X_1\times_{k} X_1) ]$ is a nilpotent ideal.
\end{itemize}
\end{prop}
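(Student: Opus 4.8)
The plan is to read (i) and (ii) as the formal calculus of correspondences attached to the transfer built above, and to isolate the one genuinely new input, which lives in (iii): a Steinberg-relation computation controlling how composition interacts with the thickening filtration.

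For (i) I would set $W = X_n\times_{S_n}X_n$ and define the composition of $\alpha,\beta\in\CH^d(W)$ by the usual formula
\[
\beta\circ\alpha = p_{13,*}\bigl(p_{12}^*\alpha\cup p_{23}^*\beta\bigr),
\]
where $p_{ij}\colon X_n\times_{S_n}X_n\times_{S_n}X_n \to X_n\times_{S_n}X_n$ are the partial projections. Since $p_{13}$ is smooth and proper of relative dimension $d$, its transfer lowers codimension by $d$, so $p_{12}^*\alpha\cup p_{23}^*\beta\in\CH^{2d}(W\times_{S_n}X_n)$ is sent back into $\CH^d(W)$. Associativity and the fact that $[\Delta]\in\CH^d(W)$ is a two-sided unit are then formal consequences of the projection formula together with the functoriality and base-change compatibility of $f_*$ recorded after \eqref{tr.trdiffform}; here I would simply transcribe the standard proof.

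For (ii) a correspondence $\gamma\in\CH^d(W)$ acts on $H^c(X_n,\sKM_r)$ by $\xi\mapsto p_{2,*}(p_1^*\xi\cup\gamma)$ with $p_1,p_2\colon W\to X_n$, likewise on $H^c(X_{n-1},\sKM_r)$ through the restriction of $\gamma$ to $X_{n-1}\times_{S_{n-1}}X_{n-1}$, and on $H^c(X_1,\Omega^{r-1}_{X_1})$ through $d\log(\gamma|_{X_1\times_k X_1})$ and the differential-form transfer \eqref{tr.trdiffform}. That this is a ring action is exactly (i). The content is that these three actions are compatible with the map $b$, the restriction and the connecting homomorphism of the long exact sequence of \eqref{sec1.shortex}; this is the assertion that correspondences respect the morphism of short exact sequences of Proposition~\ref{lem3}, which rests on the agreement of Rost's transfer with the Grothendieck–Hartshorne transfer under $d\log$ noted after \eqref{tr.trdiffform}, together with the Cartesian square of Lemma~\ref{tr.lemcar2}.

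The substance is (iii). Writing $I^{(m)}=\ker[\CH^d(X_n\times_{S_n}X_n)\to\CH^d(X_m\times_{S_m}X_m)]$, restriction along the thickenings $X_m\hookrightarrow X_n$ is a ring homomorphism for composition (pullback, cup product and transfer all commute with it by the usual properties of $f_*$), so each $I^{(m)}$ is an ideal and $I^{(1)}\supseteq\cdots\supseteq I^{(n)}=0$ with $I^{(1)}$ the ideal in question. Nilpotency follows once I prove the filtration is multiplicative,
\[
I^{(a)}\circ I^{(b)}\subseteq I^{(a+b)},
\]
since then $(I^{(1)})^n\subseteq I^{(n)}=0$. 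I reduce this to a sheaf statement: iterating \eqref{sec1.shortex}, a class in $I^{(a)}$ lifts to $H^d$ of the kernel subsheaf $\sN_a=\ker[\sKM_d\to\sKM_{d,X_a}]$, which is generated by Milnor symbols $\{1+xt^\ell,y_1,\ldots,y_{d-1}\}$ with $\ell\ge a$. The key identity is that $\{1+u,1+v\}=0$ in Milnor $K$-theory whenever $uv=0$; applying it with $u=xt^\ell$, $v=yt^{\ell'}$ and $\ell+\ell'\ge a+b$ (after permuting the two relative entries adjacent) gives $\sN_a\cdot\sN_b\subseteq\sN_{a+b}$ at the sheaf level. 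Cup product then carries $H^*(\sN_a)\otimes H^*(\sN_b)$ into $H^*(\sN_{a+b})$, and applying $p_{13,*}$, which commutes with restriction to $X_{a+b}$, yields the displayed inclusion.

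The main obstacle is precisely this last step: one must check that the Steinberg relation truly propagates through the correspondence formula. The delicate points are (a) lifting an element of the kernel ideal $I^{(a)}$ to a cohomology class of $\sN_a$, which uses the right-exactness in \eqref{sec1.shortex} and the CM resolutions of Theorem~\ref{thm:CM} so that the relevant long exact sequences exist, and (b) the fact that the transfer commutes with restriction to the thickenings, so that vanishing in $\sKM_{2d,X_{a+b}}$ \emph{before} transfer forces membership in $I^{(a+b)}$ \emph{after} it. Granting the compatibilities of $f_*$ already used in (i), both points are bookkeeping around the Steinberg computation, which is the one arithmetic ingredient.
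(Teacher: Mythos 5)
Your treatment of (i) and (ii) follows the route the paper intends (the paper itself gives no details beyond ``follow the construction of the transfer''), and your reduction of (iii) to the multiplicativity $I^{(a)}\circ I^{(b)}\subseteq I^{(a+b)}$ of the kernel filtration is the right skeleton. Two caveats on (i): the paper's transfer exists only for \emph{smooth} proper morphisms, so the unit $[\Delta]$ cannot be obtained by pushing forward along the closed immersion $\Delta$, nor can the unit property be checked by ``transcribing the standard proof'', which uses Gysin pushforward along ${\rm id}\times\Delta$; the natural fix is to define $[\Delta]$ as the image of the diagonal class of the regular scheme $X\times_S X$ under the restriction map $\iota_n$ appearing in the proof of Proposition~\ref{tr.fundprop}, and to verify its properties through that map.

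The serious problem is in (iii): your ``one arithmetic ingredient'' is false as stated. It is not true in Milnor $K$-theory of a local ring that $uv=0$ forces $\{1+u,1+v\}=0$. Take $A=k[s,\tau]/(s\tau)$ localized at the origin, $u=s$, $v=\tau$: then $uv=0$, but $\Omega^2_{A}$ contains $ds\wedge d\tau$ generating a copy of $A/(s,\tau)$ (the relation $s\,d\tau=-\tau\,ds$ only kills $s\cdot ds\wedge d\tau$ and $\tau\cdot ds\wedge d\tau$), and $d\log\{1+s,1+\tau\}=ds\wedge d\tau\neq 0$, so the symbol is nonzero. Hence $\sN_a\cdot\sN_b\subseteq\sN_{a+b}$ cannot be deduced from $uv=0$ alone; it genuinely uses the structure $\sO_{X_m}\cong\sO_{X_1}[t]/(t^m)$ and characteristic zero. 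The correct argument is supplied by Proposition~\ref{lem3}: for $\ell+\ell'\ge m$ consider $\{1+xt^{\ell},1+yt^{\ell'},y_1,\dots\}$ over $X_m$. Its restriction to $X_1$ is $\{1,1,y_1,\dots\}=0$, and its $d\log$ image vanishes because, writing $u=xt^{\ell}$, $v=yt^{\ell'}$,
\begin{equation*}
du\wedge dv \;=\; t^{\ell+\ell'}\,dx\wedge dy \;+\; \ell' y\,t^{\ell+\ell'-1}\,dx\wedge dt \;+\; \ell x\,t^{\ell+\ell'-1}\,dt\wedge dy ,
\end{equation*}
where $t^{\ell+\ell'}=0$ on $X_m$ and $t^{j}dt=\tfrac{1}{j+1}d(t^{j+1})=0$ for $j\ge m-1$ (this is where characteristic zero enters). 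Since the square \eqref{7} is cartesian, vanishing of both coordinates forces the symbol to vanish in $\sKM_{r,X_m}$. With this substitution — and keeping your steps (a) (lifting classes of $I^{(a)}$ to cohomology of $\sN_a$ via the iterated sequence \eqref{sec1.shortex} and Lemma~\ref{lem2}) and (b) (compatibility of the transfer with restriction to thickenings) — your proof of (iii) goes through.
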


\begin{prop} \label{tr.fundprop}
Fix a positive integer $i$ and assume condition ${\bf (CK)}_{X_\eta}^{2i}$ from the introduction.
Then there exists an inverse system of correspondences  $(\pi_n^{2i})_{n \ge 1}$ with
$\pi^{2i}_n \in \CH^{d}(X_n\times_{S_n} X_n)_\Q $ such that the following properties hold:
\begin{itemize}
\item[(i)] each  $\pi^{2i}_n$ is idempotent,
\item[(ii)] on $H^*_\dR(X_n/S_n)$ the correspondence $\pi^{2i}_n$ acts as the
  projection to $H^{2i}(X_n/S_n)$.
\end{itemize}
\end{prop}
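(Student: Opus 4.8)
The plan is to construct the idempotents $\pi_n^{2i}$ by lifting $\pi_1^{2i}$ through the tower $X_n \to X_{n-1} \to \cdots \to X_1$ one step at a time, and then to assemble them into a compatible inverse system. The starting point is the correspondence $\pi^{2i}$ supplied by ${\bf (CK)}_{X_\eta}^{2i}$. Since $X_\eta$ is the generic fibre and $\CH^d(X_n \times_{S_n} X_n)_\Q$ is obtained by a specialization/restriction from cycles over a neighbourhood of $\eta$, I would first spread $\pi^{2i}$ out to a class $\pi_n^{2i} \in \CH^d(X_n \times_{S_n} X_n)_\Q$ for each $n$, using the smooth base change compatibility of the transfer established above together with the fact that $\CH^d(X_n\times_{S_n}X_n)_\Q$ is built from the coniveau machinery of Section~3. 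The key structural input is Proposition~\ref{tr.action}(iii): the kernel of the restriction map
\[
\CH^d(X_n\times_{S_n}X_n)_\Q \longrightarrow \CH^d(X_1\times_k X_1)_\Q
\]
is a nilpotent ideal.

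The idempotent lifting is the technical heart. Given the idempotent $\pi_1^{2i} \in \CH^d(X_1\times_k X_1)_\Q$ coming from ${\bf (CK)}_{X_\eta}^{2i}$ (restricted to the special fibre), I would lift it to an idempotent in the ring $\CH^d(X_n\times_{S_n}X_n)_\Q$ via the standard idempotent-lifting argument along a nilpotent ideal: if $e_0$ is any lift of the idempotent and $N$ is a bound on the nilpotency, the element obtained from $e_0$ by the usual polynomial correction (for instance $3e_0^2 - 2e_0^3$ iterated, or the explicit Newton-type formula) converges after finitely many steps to an honest idempotent reducing to $\pi_1^{2i}$. This gives, for each fixed $n$, an idempotent $\pi_n^{2i}$ satisfying (i). To get an \emph{inverse system}, i.e.\ compatibility $\pi_n^{2i}|_{X_{n-1}} = \pi_{n-1}^{2i}$, I would either perform the lifting functorially up the whole tower at once (lifting through each single-step nilpotent thickening $\CH^d(X_n\times X_n)_\Q \to \CH^d(X_{n-1}\times X_{n-1})_\Q$, whose kernel is nilpotent by the same argument applied relatively), or invoke uniqueness of the idempotent lift modulo conjugation and rigidify the choices. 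The cleanest route is the step-by-step one: assuming $\pi_{n-1}^{2i}$ constructed, lift it across the nilpotent kernel of the surjection to $\CH^d(X_{n-1}\times X_{n-1})_\Q$.

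For property (ii), I would track the action of $\pi_n^{2i}$ on $H^*_\dR(X_n/S_n)$ using the compatibility of the correspondence action with the de Rham realization, together with Proposition~\ref{tr.action}(ii), which furnishes the action on the long exact cohomology sequence associated to \eqref{sec1.shortex}. Over the generic fibre, $\pi^{2i}$ acts as the projector onto $H^{2i}_\dR(X_\eta)$ by hypothesis, and this spreads out; the point is then to check that the projector property is preserved under the thickening. Here the isomorphism $\Phi$ of \eqref{int.solv} and the flatness of $H^*_\dR(X_n/S_n)$ over $S_n$ let me reduce the statement on $X_n$ to the statement on $X_1$, where it holds by construction, because $H^*_\dR(X_n/S_n)$ is a free $\sO_{S_n}$-module specializing to $H^*_\dR(X_1/k)$ and the action of $\pi_n^{2i}$ is $\sO_{S_n}$-linear.

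The main obstacle I anticipate is the compatibility across the tower, i.e.\ producing a genuine inverse system rather than an unrelated idempotent for each $n$: idempotent lifting along a nilpotent ideal is only canonical up to conjugation, so some care is needed to make the lifts mutually compatible. I expect this to be handled by performing the lift one thickening at a time, so that each $\pi_n^{2i}$ is constructed as a lift of the previously chosen $\pi_{n-1}^{2i}$, using that the single-step kernel is itself nilpotent. A secondary subtlety is verifying that the de Rham action in (ii) is exactly the projector onto the even piece $H^{2i}$ and not merely an idempotent with the correct rank; this should follow from the compatibility of the spread-out correspondence with the Gauss--Manin connection and the generic-fibre hypothesis ${\bf (CK)}_{X_\eta}^{2i}$.
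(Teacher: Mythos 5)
Your overall strategy is the same as the paper's: spread the Chow--K\"unneth projector out over the thickenings, use the nilpotence of the kernel from Proposition~\ref{tr.action}(iii) together with a universal polynomial correction to produce idempotents, and verify (ii) by linearity and freeness of de Rham cohomology. However, the route you single out as ``the cleanest'' --- assuming $\pi_{n-1}^{2i}$ constructed and lifting it ``across the nilpotent kernel of the surjection to $\CH^d(X_{n-1}\times X_{n-1})_\Q$'' --- rests on a false premise: the restriction map $\CH^d(X_n\times_{S_n}X_n)_\Q \to \CH^d(X_{n-1}\times_{S_{n-1}}X_{n-1})_\Q$ is \emph{not} surjective in general. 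Its failure of surjectivity is measured by the obstruction map ${\rm Ob}$ in the long exact sequence attached to \eqref{sec1.shortex}, and the non-vanishing of exactly such obstructions is the central theme of the whole paper; if these transition maps were surjective, Theorem~\ref{main.thmch} would be nearly trivial. Idempotent lifting along a nilpotent ideal requires the idempotent downstairs to have \emph{some} preimage upstairs, and at this point in your step-by-step scheme there is no reason for $\pi_{n-1}^{2i}$ to admit one: the already-performed polynomial correction at level $n-1$ may have moved you out of the image of the restriction map, and you have no control over this.

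The paper's resolution is precisely the alternative you mention and then discard (``perform the lifting functorially up the whole tower at once''), implemented not by climbing the tower but by a single global class: extend $\pi^{2i}_\eta$ to $\pi^{2i}\in \CH^d(X\times_S X)_\Q$ on the \emph{regular} scheme $X\times_S X$ (possible by taking closures), identify $\CH^d(X\times_S X)$ with $H^d(X\times_S X,\sKM_d)$ via the Gersten conjecture \cite{Ke}, and restrict to get classes $\tilde\pi^{2i}_n = \iota_n(\pi^{2i})$. These are automatically a compatible inverse system of (generally non-idempotent) lifts of $\pi_1^{2i}$, and they already satisfy (ii) because the action on the free $\sO_{S_n}$-module $H^*_\dR(X_n/S_n)$ is determined by base change from $H^*_\dR(X/S)$, where the generic fibre pins it down. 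Then one applies Bass's universal polynomial
\[
f_s(\alpha) = \sum_{0\le j\le s}\binom{2s}{j}\,\alpha^{2s-j}(1-\alpha)^j ,
\]
which for $s\gg 0$ (depending on $n$, by the nilpotence in Proposition~\ref{tr.action}(iii)) produces an idempotent independent of $s$; since $f_s$ commutes with the restriction ring homomorphisms and fixes idempotents, the corrected elements $\pi_n^{2i}=f_s(\tilde\pi_n^{2i})$ remain an inverse system and still act as the projector. In short: existence of compatible lifts must come for free from a single class over $X\times_S X$ \emph{before} any idempotent correction; it cannot be manufactured level-by-level, which is where your proposal breaks.
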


\begin{proof}
  Consider $\pi^{2i}_\eta\in \CH^{d}(X_\eta\times_{K} X_\eta)_\Q $ as in property ${\bf
    (CK)}_{X_\eta}^{2i}$. The element $\pi_1^{2i}$ is defined as the specialization of
  $\pi^{2i}_\eta$ to the reduced closed fiber $X_1$. 
Recall that the specialization map 
\[
\CH^{d}(X_\eta\times_{K} X_\eta) \to \CH^{d}(X_1\times_{k} X_1)
\]
is a ring homomorphism of correspondences \cite[Sec.~20.3]{Fu84}.

   As a consequence, $\pi_1^{2i}$
  satisfies properties (i) and (ii)  of Proposition~\ref{tr.fundprop} for $n=1$. We claim that we can lift this
  element to an inverse system $(\pi^{2i}_n)_{n\ge 1}$ with the requested properties.

  We can extend $\pi^{2i}_\eta$ to an element $ \pi^{2i}\in
  \CH^{d}(X\times_{S} X)_\Q $. By the Gersten conjecture for the Milnor $K$-sheaf of the
  regular scheme $X\times_S X$ \cite{Ke} we
  get the left isomorphism in the following diagram
\begin{align*}
\iota_n :  \CH^{d}(X\times_{S} X) & \cong H^d(X\times_{S} X , \sKM_{d, X\times_{S} X })
  \longrightarrow \\
 &  H^d(X_n\times_{S_n} X_n , \sKM_{d, X_n\times_{S_n} X_n })  =
  \CH^d(X_n \times_{S_n} X_n).
\end{align*}
Now consider the inverse system of correspondences $\tilde\pi^{2i}_n = \iota_n(
\pi^{2i})$. They satisfy property (ii) of the proposition and furthermore $\pi_1^{2i}= \tilde\pi_1^{2i}$.  We will apply a
transformation to these correspondences which additionally makes them idempotent.

For an element $\alpha$ of an arbitrary (not necessarily commutative) unital ring and an
integer $s\ge 1$ set 
\[
f_s(\alpha) = \sum_{0\le j\le s}  \binom{2s}{j} \alpha^{2s-j}(1- \alpha)^j.
\]
From the argument in the proof of \cite[Prop.~III.2.10]{Ba} and from Proposition~\ref{tr.action}(iii) it follows that for $s$ large, depending on $n$, the element
$\pi_n^{2i}= f_s(\tilde\pi_n^{2i})$ is idempotent and independent of $ s\gg 0$. Observe that $f_s(
\pi_1^{2i})=\pi_1^{2i}$ for all $s\ge 1$, because $\pi_1^{2i}$ is idempotent. The elements
$(\pi_n^{2i})_{n\ge 1}$  form an inverse system of idempotent correspondences, finishing the proof of Proposition~\ref{tr.fundprop}. 
\end{proof}

In the next section we use  the action of a K\"unneth type correspondence on
cohomology of absolute differential forms as described in the following lemma.

\begin{prop}\label{tr.vanlem}
Consider for given $i\ge 0$ a correspondence $\pi^i\in \CH^d(X_1\times_k X_1)_\Q$ which acts on
$H_\dR^*(X_1/k)$  as the projection to $H_\dR^i(X_1/k)$. Then
\begin{itemize}
\item[(i)]
the action of the correspondence $(\pi^{i})^{r+1}$ on 
$
H^c(X_1,\Omega^r_{X_1})$ vanishes 
for all $c+r<i$.
\item[(ii)]
 the action of $(\pi^i)^{r}$ on 
\[
\ker[H^c(X_1,\Omega^r_{X_1}) \to H^c(X_1,\Omega^r_{X_1/k})   ]
\]
vanishes for $c+r=i$.
\end{itemize}
\end{prop}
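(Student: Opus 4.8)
The plan is to filter the absolute forms by the number of differentials coming from the base field $k$ and to show that on the associated graded the correspondence $\pi^i$ acts through the relative K\"unneth projector, so that the vanishing becomes a formal length count.

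Write $g\colon X_1 \to \Spec k$ for the structure morphism and introduce the decreasing filtration
\[
F^p\Omega^r_{X_1} = \im\bigl( g^*\Omega^p_{k/\Q}\otimes_{\sO_{X_1}}\Omega^{r-p}_{X_1} \to \Omega^r_{X_1}\bigr),
\]
so that $F^0\Omega^r_{X_1}=\Omega^r_{X_1}$, $F^{r+1}\Omega^r_{X_1}=0$, and, using that the relative cotangent sequence is locally split by smoothness,
\[
\gr^p_F\Omega^r_{X_1} \cong \Omega^p_{k/\Q}\otimes_k \Omega^{r-p}_{X_1/k}.
\]
In particular the surjection $\Omega^r_{X_1}\surj\Omega^r_{X_1/k}$ is the projection onto $\gr^0_F$, whence on cohomology $\ker[H^c(X_1,\Omega^r_{X_1})\to H^c(X_1,\Omega^r_{X_1/k})] = F^1H^c(X_1,\Omega^r_{X_1})$, where $F^\bullet$ denotes also the induced filtration $F^pH^c = \im[H^c(X_1,F^p\Omega^r_{X_1})\to H^c(X_1,\Omega^r_{X_1})]$.

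The key step is to check, directly from the construction of the correspondence action in this section, that it is filtered: pullback along the projections $X_1\times_k X_1\to X_1$ preserves $F^\bullet$, cup product with the $d\log$ class of $\pi^i$ satisfies $F^a\cup F^b\subseteq F^{a+b}$, and the Grothendieck trace \eqref{tr.trgro} is $\Omega^*_{k/\Q}$-linear, hence preserves $F^\bullet$. Computing the action modulo one step of the filtration, only the $\gr^0$-component of the $d\log$ class of $\pi^i$, namely its relative de Rham cycle class, survives, so the map induced on $H^c(X_1,\gr^p_F\Omega^r_{X_1}) = \Omega^p_{k/\Q}\otimes_k H^c(X_1,\Omega^{r-p}_{X_1/k})$ is $\mathrm{id}\otimes(\pi^i)_{\mathrm{rel}}$, the relative correspondence action on the second factor. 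By degeneration of the Hodge-to-de Rham spectral sequence one identifies $H^c(X_1,\Omega^{r-p}_{X_1/k}) = \gr^{r-p}_F H^{c+r-p}_\dR(X_1/k)$, and since $\pi^i$ acts on $H^*_\dR(X_1/k)$ as the projector onto $H^i_\dR$ and respects the Hodge filtration, $(\pi^i)_{\mathrm{rel}}$ acts as the identity when $c+r-p=i$ and as $0$ otherwise. Compatibility of the action with the spectral sequence of the filtered sheaf $\Omega^r_{X_1}$ then gives that $\pi^i$ induces the zero map on $\gr^p_F H^c(X_1,\Omega^r_{X_1})$ unless $p=c+r-i$.

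Granting this, both statements follow by bookkeeping. For (i), when $c+r<i$ the exceptional index $p=c+r-i$ is negative, so $\pi^i$ is zero on every $\gr^p_F$ with $0\le p\le r$; hence $\pi^i$ maps $F^pH^c$ into $F^{p+1}H^c$, and as the induced filtration has length $r+1$ (with $F^{r+1}H^c=0$), the composite $(\pi^i)^{r+1}$ carries $F^0H^c=H^c(X_1,\Omega^r_{X_1})$ into $F^{r+1}H^c=0$. For (ii), when $c+r=i$ the exceptional index is $p=0$, so $\pi^i$ is zero on $\gr^p_F$ for every $p\ge 1$; restricted to $F^1H^c(X_1,\Omega^r_{X_1})$ it again strictly raises the filtration, and since $F^1\supseteq\cdots\supseteq F^{r+1}=0$ has length $r$, the composite $(\pi^i)^r$ vanishes on $F^1H^c$, which is precisely the kernel in (ii). The main obstacle is the middle step: verifying that the transfer built from Rost's construction and the Grothendieck trace is strictly compatible with the base-differential filtration $F^\bullet$ and reduces on $\gr^0_F$ to the relative cycle class of $\pi^i$; this rests on the $\Omega^*_{k/\Q}$-linearity of \eqref{tr.trgro} and on the compatibility of the absolute and relative $d\log$ maps, after which the conclusion is purely formal.
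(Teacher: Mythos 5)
Your proposal is correct and follows essentially the same route as the paper: the filtration $F^p$ you introduce is exactly the paper's filtration $L^s\Omega^r_{X_1} = \im[\Omega^s_k\otimes_k\Omega^{r-s}_{X_1}\to\Omega^r_{X_1}]$, the vanishing on graded pieces comes in both cases from $\pi^i$ respecting the Hodge filtration and hence killing $H^c(X_1,\Omega^{r-s}_{X_1/k})$ for $c+r-s<i$, the kernel in (ii) is identified with $F^1$ (the paper's $L^1$) in the same way, and the final length-count bookkeeping is identical. The only difference is expository: you spell out the compatibility of the correspondence action with the filtration (via $\Omega^*_{k/\Q}$-linearity of the trace), which the paper treats as implicit in its construction of the transfer.
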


\begin{proof}
The correspondence $\pi^i$ being algebraic respects the Hodge filtration on de Rham
cohomology and therefore acts on its graded pieces, which are Hodge cohomology groups according to
Hodge theory. Thus
$\pi^{i}$ acts trivially on $H^c(X_1,\Omega^r_{X_1/k})$ for  $c+r<i$. To pass from relative differential forms
to absolute differential forms we use the filtration
\[
L^s\Omega^r_{X_1} = \im[ \Omega_k^s \otimes_{k} \Omega^{r-s}_{X_1}  \to  \Omega^r_{X_1} ]
\quad (s=0, \ldots , r).
\]
Recall that
\[
L^s/L^{s+1}\Omega^r_{X_1} = \Omega^{r-s}_{X_1/k} \otimes_k  \Omega^s_k 
\]
and that $\pi^{i}$ acts on the system of morphisms
 \eq{tr.sys}{ H^c(X_1, \Omega^r_{k}\otimes_k \sO_{X_1} ) \to H^c(X_1, L^{r-1}\Omega^r_{X_1}) \to  H^c(X_1, L^{r-2}\Omega^r_{X_1})\to \cdots \to
 H^c(X_1, \Omega^r_{X_1}).
}
The filtration on cohomology  
$$L^s =  \im[ H^c(X_1, L^s\Omega^r_{X_1}) \to  H^c(X_1, \Omega^r_{X_1}) ]  $$
 has graded pieces $L^s/L^{s+1}$  which are subquotients
of $$H^c(X_1,\Omega^{r-s}_{X_1/k} ) \otimes_k \Omega^s_k.$$
On the latter groups the action of $\pi^{i}$ vanishes for $c+r-s<i$ as we have seen above.

This shows (i), while (ii) follows from the additional observation that
\[
L^1 = \ker[H^c(X_1,\Omega^r_{X_1}) \to H^c(X_1,\Omega^r_{X_1/k})   ].
\] 
\end{proof}

\medskip

\section{Deformation of Chow groups} \label{Chow}

\noindent
Let the notation be as in Section~\ref{MilnorK}.
We start this section with a basic lemma from \cite{Blo}  about the comparison of an obstruction map with a
Kodaira-Spencer map.

\begin{lem}\label{def.lem1}
The diagram
\[
\xymatrix{
H^i(X_{n+1} , Z^i_{X_{n+1}}) \ar[r] \ar[d]  & H^i(X_n , Z^i_{X_n})  \ar[r]^-{\rm Ob}
\ar[d]  &  H^{i+1}(X_1 , \Omega^{i-1}_{X_1}) \ar[d] \\
H^{2i}_\dR(X_{n+1}/S_{n+1})^\nabla\cap F^i \ar[r]   & H^{2i}_\dR(X_{n}/S_{n})^\nabla\cap F^i \ar[r]   \ar[r]^-{\rm KS}    &   H^{i+1}(X_1 ,   \Omega^{i-1}_{X_1/k})
}
\]
is commutative with exact rows.
\end{lem}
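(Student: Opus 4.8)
The plan is to realize both rows as the low-degree parts of boundary/obstruction exact sequences, and then to identify the right-hand square as the comparison between a sheaf-theoretic connecting map and the Gauss--Manin Kodaira--Spencer map. The top row I would simply read off from the short exact sequence of Zariski sheaves $0\to\Omega^{i-1}_{X_1}\xrightarrow{a}Z^i_{X_{n+1}}\to Z^i_{X_n}\to 0$ furnished by Lemma~\ref{lem1} and Proposition~\ref{lem3} (with $r=i$ and $n$ replaced by $n+1$, so that $a(\eta)=t^{n}\,d\eta+n\,t^{n-1}dt\wedge\eta$). By definition $\mathrm{Ob}$ is the connecting homomorphism $\delta$ of the associated long exact cohomology sequence, so exactness of the top row at $H^i(X_n,Z^i_{X_n})$ is immediate.

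Next I would construct the two left vertical arrows as de Rham cycle class maps. Starting from a class in $H^i(X_m,Z^i_{X_m})$ for $m=n,n+1$, one first pushes absolute closed forms to relative ones: an absolutely closed form is in particular closed for the relative complex $\Omega^\bullet_{X_m/S_m}$, giving $Z^i_{X_m}\to Z^i_{X_m/S_m}$. Composing with the inclusion of $Z^i_{X_m/S_m}$ placed in degree $i$ into the brutal truncation $\sigma^{\ge i}\Omega^\bullet_{X_m/S_m}$ and then mapping to the full relative complex yields a class in $H^{2i}_\dR(X_m/S_m)$ lying in $F^iH^{2i}_\dR$ by the very definition of the Hodge filtration. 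That it also lands in the horizontal part is exactly where absoluteness enters: since the original form is closed for the absolute differential, the $dt$-component of its differential vanishes, whence its Gauss--Manin derivative $\nabla$ is zero. The third vertical arrow is just the canonical reduction $\Omega^{i-1}_{X_1}\to\Omega^{i-1}_{X_1/k}$ from absolute to relative forms. The comparison $\Phi$ of \eqref{int.solv} is what lets me interpret these horizontal classes consistently in terms of $H^*_\dR(X_1/k)$.

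For the bottom row I would describe $\mathrm{KS}$ as the Gauss--Manin obstruction to extending a horizontal Hodge class by one infinitesimal step: lift a class in $H^{2i}_\dR(X_n/S_n)^\nabla\cap F^i$ arbitrarily to $H^{2i}_\dR(X_{n+1}/S_{n+1})$ and apply $\nabla$; horizontality modulo $t^{n}$ forces the result to be divisible by $t^{n}$, and its leading coefficient, reduced modulo $t$, is a class in $H^{2i}_\dR(X_1/k)$ which by Griffiths transversality lies in $F^{i-1}$. Its image in $\gr^{i-1}_F H^{2i}_\dR(X_1/k)=H^{i+1}(X_1,\Omega^{i-1}_{X_1/k})$ is $\mathrm{KS}$. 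Exactness of the bottom row at the middle term is then the assertion that this one class is the full obstruction to a horizontal Hodge lift; because we deform by a single step $X_n\hookrightarrow X_{n+1}$, there is no secondary obstruction, so vanishing of $\mathrm{KS}$ is equivalent to liftability, giving exactness.

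Finally, commutativity. The left square commutes by functoriality of the cycle class map under the restriction $X_n\hookrightarrow X_{n+1}$, which is routine. The right square, namely $\mathrm{KS}\circ\mathrm{cl}=(\text{reduction})\circ\mathrm{Ob}$, is the crux and is the content of the lemma of \cite{Blo}. I would prove it by a local Čech computation: represent a class of $Z^i_{X_n}$ by closed forms, lift them to $Z^i_{X_{n+1}}$, and read off $\delta=\mathrm{Ob}$ through the inverse of $a$, where the pertinent contribution is the term $n\,t^{n-1}dt\wedge\eta$; on the de Rham side, lifting the same forms and applying the absolute differential produces precisely this $dt\wedge$-term as the Gauss--Manin derivative, and reduction modulo $t$ followed by passage to relative forms matches the two outputs, factors of $n$ included. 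The main obstacle is exactly this identification: aligning the sheaf-level boundary coming from $a$ with the Gauss--Manin/Kodaira--Spencer derivative while tracking the $dt$-component, the power of $t$ (which interacts with the fact that $\nabla$ lowers the truncation level), the numerical factor $n$, and the transition from absolute ($/\Q$) to relative ($/k$) differentials. Essentially all of the work is in verifying this single compatibility.
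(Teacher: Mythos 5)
Your overall architecture is sound and matches what the paper indicates: the top row is the long exact cohomology sequence of \eqref{sec1.shortex}, the vertical maps are the natural ones through absolutely closed forms (with horizontality coming from the Katz--Oda description of $\nabla$), and the whole content sits in the right-hand square. But be aware that the paper gives no proof at all here: the lemma is imported wholesale from Bloch's semi-regularity paper \cite{Blo}, with ${\rm KS}$ \emph{defined} as the map (4.1) of that paper (cup product with the Kodaira--Spencer class). So your proposal is, in effect, an attempt to reconstruct Bloch's argument, and the part you defer --- ``essentially all of the work is in verifying this single compatibility'' --- is precisely what is being cited; deferring it leaves the proof empty at its core. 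Moreover, the sketch you give of that verification has concrete errors, not just missing details.

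First, the local lifts $\tilde\alpha$ of a \v{C}ech cocycle for $Z^i_{X_n}$ lie in $Z^i_{X_{n+1}}$, i.e.\ they are \emph{absolutely closed}, so ``applying the absolute differential'' to them gives zero; the term $n\,t^{n-1}dt\wedge\eta$ arises in the \v{C}ech direction, as $\check\delta\tilde\alpha=a(\eta)$, not in the de Rham direction. Worse, the cochain $(\tilde\alpha)$ is not even a cocycle for the relative de Rham complex of $X_{n+1}/S_{n+1}$ (its \v{C}ech differential is $t^n d\eta\neq 0$), so the Gauss--Manin derivative cannot be ``read off'' from it. One must add the correction term $t^n\eta$ in bidegree $(i+1,i-1)$; using $t^{n+1}=0$ and $t^ndt=0$ (characteristic $0$) one then finds that the corrected class is an honest \emph{horizontal} lift of the de Rham class, lying only in $F^{i-1}$, and the obstruction materializes as its defect in $\gr_F^{i-1}$ --- not as a nonzero $\nabla$ of a natural lift. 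Second, your definition of ${\rm KS}$ is defective: for an \emph{arbitrary} lift the leading coefficient of $\nabla$ is not well defined modulo $F^i$ (and Griffiths transversality does not apply); the lift must be taken inside $F^i$, which requires the surjectivity of $F^iH^{2i}_\dR(X_{n+1}/S_{n+1})\to F^iH^{2i}_\dR(X_n/S_n)$, i.e.\ degeneration of Hodge--de Rham over the Artinian base together with base change --- an input you never invoke, and which is also what is needed to turn ``no secondary obstruction'' into an actual proof of exactness of the bottom row. Third, even after these repairs your normalization does not close up: if $\beta$ is the horizontal lift and $\beta'=\beta+t^n\epsilon$ an $F^i$-lift, then $\nabla_{d/dt}\beta'=n\,t^{n-1}\epsilon$, so the quantity you call ${\rm KS}$ is $n$ times the image of ${\rm Ob}$. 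This level-dependent factor is harmless for exactness and for the use of the lemma in Theorem~\ref{main.thmch}, but it means the square, with your ${\rm KS}$, does not literally commute; one must either renormalize or work with Bloch's cup-product definition and prove the comparison --- which is exactly the verification your proposal leaves out.
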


 Here $\nabla\in {\rm End}_{k}( H^*_\dR(X_n/S_n) ) $ is the Gauss-Manin connection and
 $F^i$ is the Hodge filtration.
The upper row is part of the long exact cohomology sequence
associated to \eqref{sec1.shortex} and $\rm KS$ is induced by the Kodaira-Spencer map \cite[(4.1)]{Blo}.

We can now state the version of our main theorem for (cohomological) Chow groups. Let
\[
\cl : \CH^i(X_n) \to H_\dR^{2i}(X_n/S_n)^\nabla \subset  H_\dR^{2i}(X_n/S_n)
\]
be the de Rham cycle class map, which is induced by the morphism of complexes
\[
d\log : \sKM_i [-i] \to \Omega^\bullet_{X_n/S_n}.
\]

The restriction map
\[
\Phi: H_\dR^{2i}(X/S)^\nabla \xrightarrow{\sim} H_\dR^{2i}(X_1/k)
\]
is an isomorphism by \cite[Prop.~8.9]{Katz}.

\begin{thm}\label{main.thmch}
Assume that for  $X/S$  as above and for a fixed $i$ the  property ${\bf
  (CK)}_{X_\eta}^{2i}$ explained in the introduction
holds. Then for $\xi_1 \in \CH^i(X_1)_\Q$ the following are equivalent:
\begin{itemize}
\item[(i)] $\Phi^{-1} \circ \cl(\xi_1) \in H^{2i}_\dR(X/S)^\nabla\cap F^i H^{2i}_\dR(X/S)$,
\item[(ii)] there is an element $\hat \xi \in (\varprojlim_n \CH^i(X_n))_\Q  $ such
  that 
\eq{cherneq}{\cl(\hat \xi |_{X_1}) = \cl (\xi_1) \in H_\dR^{2i}(X_1/k).}
\end{itemize}
\end{thm}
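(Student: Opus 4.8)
**The plan is to prove the two implications separately, with the forward direction (ii) $\Rightarrow$ (i) being essentially formal and the reverse direction (i) $\Rightarrow$ (ii) being the heart of the matter.**

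For (ii) $\Rightarrow$ (i), suppose we have $\hat\xi \in (\varprojlim_n \CH^i(X_n))_\Q$ with $\cl(\hat\xi|_{X_1}) = \cl(\xi_1)$. The cycle class map $\cl$ lands in $H^{2i}_\dR(X_n/S_n)^\nabla$, and by compatibility of the cycle class with the restriction maps $X_n \hookrightarrow X_{n+1}$, the classes $\cl(\hat\xi|_{X_n})$ assemble into an element of $\varprojlim_n H^{2i}_\dR(X_n/S_n)^\nabla \cong H^{2i}_\dR(X/S)^\nabla$. Moreover the de Rham cycle class is induced by the $d\log$ map $\sKM_i[-i] \to \Omega^\bullet_{X_n/S_n}$, whose image lands in the Hodge filtration piece $F^i$; so the limit class lies in $F^i H^{2i}_\dR(X/S)$. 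Under $\Phi$ this limit class restricts to $\cl(\hat\xi|_{X_1}) = \cl(\xi_1)$, giving (i). I expect this direction to require only a careful bookkeeping of compatibilities, not genuinely new input, so the property ${\bf (CK)}_{X_\eta}^{2i}$ is presumably not needed here.

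For the reverse direction (i) $\Rightarrow$ (ii), the strategy is to lift $\xi_1$ step by step through the tower $\cdots \to \CH^i(X_{n+1}) \to \CH^i(X_n) \to \cdots \to \CH^i(X_1)$, constructing a compatible system. The obstruction to lifting a class from $X_n$ to $X_{n+1}$ is controlled by the map ${\rm Ob}$ in Lemma~\ref{def.lem1}, landing in $H^{i+1}(X_1,\Omega^{i-1}_{X_1})$, and the bottom row of that diagram identifies the relevant obstruction with a Kodaira--Spencer class in $H^{i+1}(X_1,\Omega^{i-1}_{X_1/k})$. Hypothesis (i) says precisely that $\Phi^{-1}\circ\cl(\xi_1)$ is horizontal and lies in $F^i$, which via the commutative diagram of Lemma~\ref{def.lem1} forces the Kodaira--Spencer obstruction in the bottom row to vanish. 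The difficulty is that vanishing of the image in the relative group $H^{i+1}(X_1,\Omega^{i-1}_{X_1/k})$ does not by itself kill the absolute obstruction in $H^{i+1}(X_1,\Omega^{i-1}_{X_1})$; the kernel of $H^{i+1}(X_1,\Omega^{i-1}_{X_1}) \to H^{i+1}(X_1,\Omega^{i-1}_{X_1/k})$, coming from absolute differentials $\Omega^*_k$, remains.

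This is where the correspondence machinery enters, and I expect it to be the main obstacle. The idea is to apply the idempotent $\pi^{2i}_n$ from Proposition~\ref{tr.fundprop}, which acts compatibly on the whole long exact obstruction sequence by Proposition~\ref{tr.action}(ii). The point of Proposition~\ref{tr.vanlem}(ii) is exactly that a suitable power of a K\"unneth projector $\pi^{2i}$ annihilates the troublesome kernel $\ker[H^{i+1}(X_1,\Omega^{i-1}_{X_1}) \to H^{i+1}(X_1,\Omega^{i-1}_{X_1/k})]$ in the range $c+r=i$ relevant here. So after replacing $\xi_1$ by $\pi^{2i}_1\cdot\xi_1$ (harmless because $\pi^{2i}$ acts as the identity on the $(2i)$-th de Rham cohomology where our cycle class lives), the full absolute obstruction vanishes and the lift exists. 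One then iterates, using the inverse system $(\pi^{2i}_n)_n$ to keep the lifts compatible, and passes to the limit to produce $\hat\xi \in (\varprojlim_n \CH^i(X_n))_\Q$. A final check confirms $\cl(\hat\xi|_{X_1}) = \cl(\xi_1)$, which holds since applying the projector does not change the cycle class in the degree $2i$ part. The delicate point throughout is ensuring that the correspondence action is compatible across all $n$ and with the obstruction maps, so that killing the obstruction at each stage can be done coherently rather than merely stage by stage.
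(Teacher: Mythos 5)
Your proposal follows the paper's own proof in all essentials: (ii) $\Rightarrow$ (i) is treated as formal; and (i) $\Rightarrow$ (ii) proceeds by inductive lifting, where hypothesis (i) together with Lemma~\ref{def.lem1} places the obstruction ${\rm Ob}(\hat\xi_{n-1})$ in $K=\ker[H^{i+1}(X_1,\Omega^{i-1}_{X_1})\to H^{i+1}(X_1,\Omega^{i-1}_{X_1/k})]$, and the idempotents $\pi^{2i}_n$ of Proposition~\ref{tr.fundprop}, acting on the obstruction sequence via Proposition~\ref{tr.action}(ii), annihilate $K$ by Proposition~\ref{tr.vanlem}(ii) (applied with $c=i+1$, $r=i-1$, so $c+r=2i$; the ``suitable power'' $(\pi^{2i}_1)^{i-1}$ collapses to $\pi^{2i}_1$ itself since the correspondence is idempotent). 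Maintaining $\pi^{2i}_n\cdot\hat\xi_n=\hat\xi_n$ by applying $\pi^{2i}_n$ to each chosen lift is exactly the paper's mechanism \eqref{def.eq1}, and your final check of \eqref{cherneq} is the paper's as well.

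There is, however, one genuine gap at the very end. Your induction necessarily produces classes $\hat\xi_n\in\CH^i(X_n)_\Q$ --- rational coefficients are forced on you because the $\pi^{2i}_n$ are rational correspondences --- hence an element of $\varprojlim_n\bigl(\CH^i(X_n)_\Q\bigr)$, whereas statement (ii) asks for an element of $(\varprojlim_n\CH^i(X_n))\otimes\Q$. These two groups do not agree in general, since inverse limits do not commute with $\otimes\,\Q$: for the system $\Z\xleftarrow{\cdot 2}\Z\xleftarrow{\cdot 3}\Z\leftarrow\cdots$ the limit is $0$ while the limit of the rationalized system is $\Q$. So ``passing to the limit'' does not by itself yield the required $\hat\xi$. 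The paper closes this gap with Claim~\ref{chow.claim1}: the natural map $(\varprojlim_n\CH^i(X_n))_\Q\to\varprojlim_n\CH^i(X_n)_\Q$ is surjective. This is proved by a diagram chase using that the kernel and obstruction terms in the sequence coming from \eqref{sec1.shortex} are the groups $H^{i}(X_1,\Omega^{i-1}_{X_1})$ and $H^{i+1}(X_1,\Omega^{i-1}_{X_1})$, which are already $\Q$-vector spaces; this yields the levelwise surjectivity \eqref{eq.surchow} of $\CH^i(X_n)\to\CH^i(X_{n-1})\times_{\CH^i(X_{n-1})_\Q}\CH^i(X_n)_\Q$, from which the claim follows. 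You need this claim, or an equivalent argument, before your construction legitimately proves (ii).
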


\begin{rmk}
For $k$ algebraic over $\Q$ it has been known to the experts for a long time,
(see \cite{GG}, and \cite{PaRa} for more recent work in the case of hypersurface sections), 
that for an element  $\xi_1 \in \CH^i(X_1)$ (note that we can use integral coefficients here)
condition (i) of the theorem is equivalent to:
\begin{itemize}
\item[(ii')] there is an element $\hat \xi \in \varprojlim_n \CH^i(X_n)  $ such
  that $ \xi |_{X_1 }= \xi_1$.
\end{itemize}
\end{rmk}

\begin{proof}[Proof of Theorem \ref{main.thmch}]
The implication (ii) $\Rightarrow$ (i) is clear. So consider (i) $\Rightarrow$ (ii).

\begin{claim}\label{chow.claim1}
The map
\[
 (\varprojlim_n \CH^i(X_n))_\Q
\to \varprojlim_n \CH^i(X_n)_\Q 
\]
is surjective.
\end{claim}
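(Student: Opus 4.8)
The plan is to prove Claim~\ref{chow.claim1} by analyzing the obstruction to lifting a compatible system of rational classes to a compatible system of integral-up-to-torsion classes, using the $\lim^1$-formalism together with the correspondence action supplied by Proposition~\ref{tr.fundprop}. An element of $\varprojlim_n \CH^i(X_n)_\Q$ is a compatible system $(\bar\xi_n)_n$ with each $\bar\xi_n \in \CH^i(X_n)_\Q = \CH^i(X_n)\otimes\Q$. The issue is that the individual representatives of the $\bar\xi_n$ in $\CH^i(X_n)$ need not be compatible under the transition maps; they are only compatible after tensoring with $\Q$. Since tensoring with $\Q$ commutes with inverse limits only up to a $\varprojlim^1$ correction, the surjectivity in the claim is controlled by the vanishing (or Mittag--Leffler triviality) of a $\varprojlim^1$ term built from the kernels of the transition maps $\CH^i(X_{n+1}) \to \CH^i(X_n)$.

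First I would set $C_n = \CH^i(X_n)$ and examine the transition maps in the inverse system $(C_n)_n$. The natural strategy is to split off the rational part of the system. From the long exact cohomology sequence associated to \eqref{sec1.shortex} (as recorded in Proposition~\ref{tr.action}(ii)), the kernel of $C_{n+1}\to C_n$ is a subquotient of $H^i(X_1,\Omega^{i-1}_{X_1})$, a finite-dimensional $k$-vector space. The key input is Proposition~\ref{tr.action}(ii) and (iii): the ring $\CH^d(X_n\times_{S_n}X_n)$ acts on this long exact sequence, and the kernel of the reduction map on correspondence rings is a nilpotent ideal. This lets one replace the full system by the image of a single idempotent-type operator, making the graded pieces of the system controllable.

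The main technical step is to show that the relevant $\varprojlim^1$ vanishes after tensoring with $\Q$, equivalently that the pro-system of kernels satisfies Mittag--Leffler up to torsion. Concretely I would use the five-term exact sequence relating $\varprojlim_n(C_n\otimes\Q)$, $(\varprojlim_n C_n)\otimes\Q$, and $\varprojlim^1_n C_n$: for an inverse system of abelian groups one has an exact sequence
\eq{}{
0 \to (\varprojlim_n C_n)\otimes\Q \to \varprojlim_n (C_n\otimes\Q) \to (\varprojlim^1_n C_n)\otimes\Q \to \cdots
}
so surjectivity of the first map, which is the content of the claim, would follow once the torsion-free quotient contribution of $\varprojlim^1_n C_n$ is shown to inject into the next term in a way that the map in the claim still hits everything; more directly, I would show that any compatible rational system lifts by choosing integral representatives level by level and correcting the discrepancies using that the kernel groups are finitely generated $k$-vector spaces tensored appropriately, so that the correction terms form a Mittag--Leffler system.

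The hard part will be controlling the $\varprojlim^1$ precisely, because the kernels $\ker(C_{n+1}\to C_n)$ are $k$-vector spaces (hence far from finitely generated as abelian groups when $k$ is large), so the naive finite-dimensionality argument for Mittag--Leffler does not apply over $\Z$. The resolution I expect is to work entirely in the $\Q$-linear (or $k$-linear) category: since each $C_n\otimes\Q$ is already a module over a field-linear structure and the transition maps are surjective modulo the nilpotent correspondence ideal, the system $(C_n\otimes\Q)_n$ is itself Mittag--Leffler, forcing $\varprojlim^1_n(C_n\otimes\Q)=0$ and giving the surjectivity directly. Thus the real work is to verify the Mittag--Leffler condition for $(C_n\otimes\Q)_n$, for which I would invoke that the transition maps $\CH^i(X_{n+1})_\Q \to \CH^i(X_n)_\Q$ have images stabilizing, a consequence of the finite length of the associated graded pieces in the $\Q$-linear setting combined with Proposition~\ref{tr.action}.
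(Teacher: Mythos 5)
Your argument has a genuine gap at its final, decisive step. You conclude from the (asserted) Mittag--Leffler property of the tower $(C_n\otimes\Q)_n$ that $\varprojlim^1_n(C_n\otimes\Q)=0$ and that this gives "the surjectivity directly". It does not: vanishing of $\varprojlim^1$ of the rationalized tower controls exactness of $\varprojlim$ applied to short exact sequences of such towers, but says nothing about the comparison map $(\varprojlim_n C_n)\otimes\Q \to \varprojlim_n(C_n\otimes\Q)$, which is what the claim asserts. Concretely, take $C_n=\Z$ for all $n$ with every transition map multiplication by a prime $p$: the rational tower is $\Q \xleftarrow{p} \Q \xleftarrow{p}\cdots$, all maps isomorphisms, hence Mittag--Leffler with vanishing $\varprojlim^1$, yet $(\varprojlim_n C_n)\otimes\Q=0$ while $\varprojlim_n(C_n\otimes\Q)=\Q$, so the comparison map is not surjective. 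The true obstruction is uniformity of denominators: a compatible rational system need not be a single rational multiple of a compatible integral system. For the same reason, the "five-term exact sequence" you write relating $(\varprojlim_n C_n)\otimes\Q$, $\varprojlim_n(C_n\otimes\Q)$ and $(\varprojlim^1_n C_n)\otimes\Q$ is not a valid general statement: $\Q$ is not finitely presented over $\Z$, so $-\otimes\Q$ neither commutes with $\varprojlim$ nor fits into such a sequence. (Moreover, the Mittag--Leffler property of $(C_n\otimes\Q)_n$ is itself not established by your sketch, since the relevant graded pieces are finite-dimensional $k$-vector spaces, which have infinite $\Q$-dimension for general $k$; but even granting it, the conclusion would not follow.)

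The paper's proof is elementary and uses none of the machinery you invoke --- no correspondences, no ${\bf (CK)}$ hypothesis, no $\varprojlim^1$; the claim is purely formal. The point is that in the long exact sequence coming from \eqref{sec1.shortex}, the terms flanking the restriction map $\CH^i(X_n)\to\CH^i(X_{n-1})$ are $H^{i}(X_1,\Omega^{i-1}_{X_1})$ and $H^{i+1}(X_1,\Omega^{i-1}_{X_1})$, which are $k$-vector spaces in characteristic $0$, hence uniquely divisible and unchanged by $-\otimes\Q$. Comparing the integral long exact sequence with its rationalization (the outer vertical maps being identities), a diagram chase yields the surjection
\[
\CH^i(X_n) \surj \CH^i(X_{n-1})\times_{\CH^i(X_{n-1})_\Q}\CH^i(X_n)_\Q .
\]
Given $(\bar\xi_n)_n\in\varprojlim_n\CH^i(X_n)_\Q$, choose a single integer $N$ clearing the denominator of $\bar\xi_1$ and an integral lift $\xi_1$ of $N\bar\xi_1$; the displayed surjectivity then lets one inductively choose $\xi_n\in\CH^i(X_n)$ restricting to $\xi_{n-1}$ and mapping to $N\bar\xi_n$, so that $(\xi_n)_n\otimes\frac{1}{N}$ is the required preimage. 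This is exactly the "level-by-level correction" you gesture at in your third paragraph, but the mechanism that makes the correction work is the unique divisibility of the error terms, not any Mittag--Leffler condition; your proposal is missing this key observation, and the route you substitute for it fails.
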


\begin{proof}
From the short exact sequence \eqref{sec1.shortex} we get for $n>1$ a commutative diagram with exact sequences
\[\xymatrix{
H^{i}(X_1, \Omega^{i-1}_{X_1}) \ar@{=}[d] \ar[r] & \CH^i(X_n) \ar[r] \ar[d] &
\CH^i(X_{n-1}) \ar[d] \ar[r]^-{\rm
  Ob} &
H^{i+1}(X_1, \Omega^{i-1}_{X_1}) \ar@{=}[d]\\
H^{i}(X_1, \Omega^{i-1}_{X_1})  \ar[r] & \CH^i(X_n)_\Q \ar[r] &  \CH^i(X_{n-1})_\Q \ar[r]^-{\rm
  Ob} &
H^{i+1}(X_1, \Omega^{i-1}_{X_1})
}\]
A diagram chase implies that the map 
\eq{eq.surchow}{\xymatrix{\CH^i(X_n) \ar@{->>}[r] & \CH^i(X_{n-1})
\times_{\CH^i(X_{n-1})_\Q } \CH^i(X_n)_\Q } }
 is surjective. From this Claim~\ref{chow.claim1} follows easily.
\end{proof}

By Claim~\ref{chow.claim1} it is enough to construct a pro-system $\hat\xi \in
\varprojlim_n \CH^i(X_n)_\Q  $ satisfying~\eqref{cherneq}. We will do this successively.

Choose correspondences $\pi_n^{2i}$ as in Proposition~\ref{tr.fundprop}. We claim that
there exists an element $$\hat \xi = (\hat \xi_n)_{n\ge 1} \in  \varprojlim_n \CH^i(X_n)_\Q  $$
such that 
\eq{def.eq1}{
\hat \xi_1 = \pi_1^{2i}\cdot \xi_1\quad \text{ and }\quad \pi_n^{2i}\cdot
\hat\xi_n=\hat\xi_n  \quad \text{  for all } n\ge 1.
}
 Indeed, assume we have already constructed $(\hat
\xi_m)_{1\le m\le n-1}$ with  property \eqref{def.eq1}. From Proposition~\ref{tr.action} we
know that $\pi_n^{2i}$ acts on the following diagram with exact row and column
\[
\xymatrix{  & &  K \ar[d]  \\
 \CH^i(X_n)_\Q  \ar[r] &\CH^i(X_{n-1})_\Q
\ar[r]^-{\rm Ob} &
H^{i+1}(X_1,\Omega^{i-1}_{X_1})  \ar[d]^{\sigma}   \\
  & & H^{i+1}(X_1,\Omega^{i-1}_{X_1/k})  
}
\]
where $K$ is defined as the kernel of $\sigma$. By Lemma~\ref{def.lem1} we have
$ {\rm Ob} (\hat\xi_{n-1})\in K $.
From the latter and Proposition~\ref{tr.vanlem} we deduce the third equality in
\eq{}{
{\rm Ob}(\hat \xi_{n-1}) = {\rm Ob}( (\pi_{n-1}^{2i})^{i-1} \cdot \hat\xi_{n-1}) =
(\pi_{1}^{2i})^{i-1} \cdot {\rm Ob} ( \hat\xi_{n-1}) =0. 
}
Because the obstruction vanishes we can find $\hat\xi'_n\in \CH^i(X_n)_\Q$ with
$\hat\xi'_n|_{X_{n-1}} = \hat\xi_{n-1}$. To finish the construction we set $\hat\xi_n =
\pi^{2i}_n \cdot\hat \xi'_n$.
\end{proof}

\section{Motivic complex and Chern character}\label{motcom}

\noindent
We begin this section by  proving that the canonical map from Milnor $K$-theory to Quillen
$K$-theory induces an isomorphism on certain relative $K$-groups. Then we study a Chern
character isomorphism using higher algebraic $K$-theory and motivic cohomology. For both
results the techniques are standard, so we only sketch the proofs.

 We consider a pro-system
of pairs of rings of the form
$(A_\bullet,A)$ with $A_\bullet= A[t]/(t^n)$ and we assume that $A$ is essentially smooth
over $k$ with ${\rm char} (k) = 0$.
By $K_*(R',R)$ we denote the relative Quillen $K$-groups of a homomorphism between rings $R'\to R$.

\begin{prop}\label{com.main}
For $A$ as above the
canonical homomorphism  
\eq{com.mapcom}{
\ker [\rKM_*(A_\bullet) \to  \rKM_*(A)] \xrightarrow{\sim} K_* (A_\bullet,A) 
}
is a pro-isomorphism.
\end{prop}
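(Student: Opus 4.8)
The plan is to prove the relative statement by reducing to the local situation of a single ring and then comparing the Milnor and Quillen relative $K$-groups degree by degree through their associated graded pieces. The key point is that both sides of \eqref{com.mapcom} fit into computations involving the relative differentials $\Omega^*_{A_1}$ that we have already met in Section~\ref{MilnorK}, so the proof proceeds by identifying both sides with the same differential-form expression in the pro-system.

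First I would compute the left-hand side. Using Lemma~\ref{lem2}, which exhibits the surjection $\sKM_{r,X_n} \to \sKM_{r,X_{n-1}}$ with kernel a quotient of $\Omega^{r-1}_{X_1}$ via the map $b$, one obtains on the level of rings that the relative Milnor $K$-group $\ker[\rKM_*(A_\bullet)\to\rKM_*(A)]$ is built up successively from copies of $\Omega^{*-1}_{A_1}$. More precisely, for the pro-system as $n$ varies, the graded pieces of the kernel filtration are governed by the $a$- and $b$-maps of Proposition~\ref{lem3}, so the left-hand side is pro-isomorphic to an explicit expression in absolute differentials. Second, for the right-hand side, I would invoke the standard relative $K$-theory computation in characteristic zero: for a nilpotent ideal $I\subset A_\bullet$ with $I=tA_\bullet$, Goodwillie's theorem identifies relative $K$-theory with relative cyclic homology, $K_*(A_\bullet,A)\otimes\Q \cong \rHC_{*-1}(A_\bullet,A)\otimes\Q$, and relative cyclic homology of the truncated polynomial extension is in turn computed by relative differential forms. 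This is precisely where the pro-system and the hypothesis $\Char(k)=0$ enter, and it reduces the right-hand side to the same differential-form expression.

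Third, having expressed both sides in terms of $\Omega^*_{A_1}$, I would check that the canonical comparison map \eqref{com.mapcom} induces, after passing to associated graded, the identity (or an explicit isomorphism) on these differential-form pieces; this uses that the $d\log$-type formula defining $b$ in Lemma~\ref{lem2} matches the symbol map from Milnor $K$-theory into cyclic homology under the Goodwillie identification. A five-lemma / induction on $n$ argument over the pro-system then upgrades the graded isomorphism to the claimed pro-isomorphism.

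The main obstacle I expect is the careful bookkeeping of the pro-system: the individual maps $\rKM_*(A_\bullet)\to K_*(A_\bullet)$ need not be isomorphisms for each fixed $n$, and it is essential that the statement is only a \emph{pro}-isomorphism. Controlling the transition maps so that the obstruction terms vanish in the pro-category — i.e.\ showing that any failure at level $n$ is killed after pulling back to a sufficiently high level $n'$ — is the delicate part. Since the paper explicitly says the techniques are standard and only sketches the proof, I would keep this step brief, citing Goodwillie's theorem together with the cyclic homology computation of truncated polynomial algebras and matching the symbol maps, rather than grinding through the explicit transition-map estimates.
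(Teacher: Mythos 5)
Your proposal follows essentially the same route as the paper's proof: Goodwillie's theorem to identify $K_*(A_\bullet,A)$ with relative cyclic homology, the pro-HKR theorem (in the form of the maps $e_\bullet$ and Lemma~\ref{mil.lem4}) to identify that with relative (closed) differential forms, and the observation that the resulting composite agrees with the $d\log$ map on relative Milnor $K$-theory. The only cosmetic difference is that where you propose a graded-pieces/five-lemma induction on the Milnor side, the paper invokes the cartesian square of Proposition~\ref{lem3} directly, which already gives the level-wise identification $\ker[\sKM_{r,X_n}\to\sKM_{r,X_1}]\cong\ker[Z^r_{X_n}\to Z^r_{X_1}]$ — but that square is itself proved by exactly the induction you describe, so the content is the same.
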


\begin{proof}
By  Goodwillie's theorem \cite{Good} there is an isomorphism
\eq{com.trmap}{
K_{i+1} (A_n,A)  \xrightarrow{\sim} \rHC_{i}(A_n,A)
}
for any $n\ge 1$. There is a canonical homomorphism
\eq{com.en}{
e_n: \rHC_i(A_n)  \to \Omega^i_{A_n}/B^i_{A_n} \oplus Z_{A_n}^{i-2}/B_{A_n}^{i-2} \oplus
Z_{A_n}^{i-4}/B_{A_n}^{i-4}  \oplus \cdots,
}
see \cite[9.8]{W}.  
By the Hochschild-Kostant-Rosenberg theorem \cite[Thm.\ 9.4.7]{W} resp.\ a pro-version of
it (see \cite{Mor2} for a general discussion), one sees that the
corresponding maps on Hochschild homology
\[
e: {\rm HH}_*(A) \to \Omega_{A}^* \quad\text{ resp. }\quad   e: {\rm HH}_*(A_\bullet) \to \Omega_{A_\bullet}^*
\]
induce an isomorphism resp.\ a pro-isomorphism. Then by a short argument with mixed
complexes \cite[9.8.13]{W} one deduces that the map $e_1$ and the pro-system of maps $e_\bullet$ from
\eqref{com.en} induce an isomorphism resp.\ a pro-isomorphism. Finally, using
Lemma~\ref{mil.lem4} we see that we get pro-isomorphisms
\eq{com.eq1}{
\rHC_i(A_\bullet,A)  \xrightarrow{e_\bullet} \ker[
\Omega^i_{A_\bullet}/B^i_{A_\bullet}\to \Omega^i_{A}/B^i_{A}]  \xrightarrow{d}  \ker[
Z^{i+1}_{A_\bullet} \to  Z^{i+1}_{A} ].
}
Following the steps of this construction carefully shows that the composition of
\eqref{com.mapcom}, \eqref{com.trmap} and \eqref{com.eq1} is equal to the $d\log$ map, which is an isomorphism
by the cartesian square~\eqref{7}. 
\end{proof}

More general results in the direction of Proposition~\ref{com.main} can be found in
\cite{Mor} and \cite{Mor2}.

By classical techniques  one constructs a Chern character ring homomorphism
\eq{com.chiso}{
\ch: K_0(X_n)_\Q  \to  \bigoplus_{i\ge 0} \CH^i(X_n)_\Q,
}
where we use the notation of Section~\ref{MilnorK}. The Chern character  is characterized by
the property that the composite morphism 
\[
H^1(X_1 , \sO^\times_{X_{n}} )\cong\Pic(X_n) \to  K_0(X_n)_\Q \xrightarrow{\ch} \bigoplus_{i\ge 0}
\CH^i(X_n)_\Q \to \CH^1(X_n)_\Q 
\] 
is induced by the canonical isomorphisms $ H^1(X_1, \sO^\times_{X_n}) \cong \CH^1(X_n) $.

Using Proposition~\ref{com.main} and Zariski descent for algebraic $K$-theory \cite[Sec.~10]{TT} we
will show in this section that \eqref{com.chiso} induces a pro-isomorphism with respect
to $n$, see Theorem~\ref{com.mainthm}.
 The pro-isomorphism \eqref{com.chiso} together with 
Theorem~\ref{main.thmch} immediately imply Theorem~\ref{main.thm}.

Let $\Z_{X_1}(r)$ be the weight $r$ motivic complex of Zariski sheaves on the smooth variety $X_1/k$ constructed by
Suslin-Voevodsky, see \cite{MVW}.
We define a motivic complex $\Z_{X_n}(r)$ of the scheme $X_n$ by the homotopy
Cartesian square
\eq{motcompdef}{
\xymatrix{
\Z_{X_n}(r) \ar[r] \ar[d]  & \Z_{X_1}(r)  \ar[d]\\
  \sKM_{r,X_n}[-r] \ar[r]   & \sKM_{r,X_1}[-r]
}}
The reader should be warned that the complex $\Z_{X_n}(r)$ for fixed $n$ cannot be `the
correct' motivic complex of $X_n$, but as a pro-system in $n$ we get the `right' motivic
theory. In fact from comparison with algebraic $K$-theory we expect the `proper' homotopy fiber of
the upper row of \eqref{motcompdef} to have non-trivial cohomology sheaves in each degree
in the interval $[1,r]$ and not only in degree $r$.

There is a Chern character homomorphism from (higher) algebraic $K$-theory to the
cohomology of the motivic complex $\Z_{X_\bullet}(r)$.
The technique of the construction of the higher Chern character is explained in \cite{Gil}. We recall the construction.

The universal Chern character 
\[
\ch \in \bigoplus_{r\ge 0} H^{2r}({\rm BGL}_k, \Q_{X_1}(r))
\]
induces morphisms
\eq{com.ch1}{
\ch_r : K_{X_\bullet} \to \mathfrak K  \Q_{X_\bullet}(r)[2r]
}
in the homotopy category of pro-spectra in the sense of \cite{Isa}. Here $K(X_n)$ is the non-connective $K$-theory
spectrum of $X_n$ \cite[Sec.~6]{TT} and $\mathfrak K $ is the Eilenberg--MacLane functor.

Moreover,  $\ch$ induces morphisms of Zariski descent spectral sequences \cite[Sec.~10]{TT}
\begin{align*}
^K \!E_2^{i,j}(X_\bullet) = H^i(X_1, \sK_{-j,X_\bullet}) &\Rightarrow
K_{-i-j,X_\bullet}   \\
^{\rm mot}\! E_2^{i,j}(X_\bullet) = H^i(X_1, \sH^j(\Z_{X_\bullet}(r) )) &\Rightarrow  H^{i+j}(X_1,
\Z_{X_\bullet}(r))
\end{align*}
of the form
\eq{com.ch12}{
\ch_r : ^K \!\! E_2^{i,j}(X_\bullet)_\Q \to ^{\rm mot}\!\! E_2^{i,j+2r}(X_\bullet)_\Q.
}
For any $r\ge 0$ there is a similar Chern character of relative theories
\eq{com.ch2}{
\ch_r : ^K\!\! E_2^{i,j}(X_\bullet,X_1) \to ^{\rm mot}\!\! E_2^{i,j+2r}(X_\bullet,X_1).
}
which is a pro-isomorphism for $r=-j$ by Proposition~\ref{com.main} and vanishes
otherwise.
It is well-known \cite{Bl} that the Chern character induces an isomorphism 
\eq{com.ch3}{
\ch : K_i(X_1)_\Q \xrightarrow{\sim} \bigoplus_{r\ge 0} H^{2r-i}(X_1, \Q_{X_1}(r))
}
for any $i\in \Z$.
Combining isomorphisms \eqref{com.ch2} and \eqref{com.ch3} we get
 the
required isomorphism between pro-groups:

\begin{thm}\label{com.mainthm}
For any smooth scheme $X/S$, which is separated and of finite type, there is a pro-isomorphism 
\eq{com.chiso2}{
\ch : K_i(X_\bullet)_\Q \xrightarrow{\sim} \bigoplus_{r\ge 0} H^{2r-i}(X_1, \Q_{X_\bullet}(r)).
}
\end{thm}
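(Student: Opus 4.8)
The plan is to assemble the pro-isomorphism from the two Chern character inputs already recorded in the excerpt, using the Zariski descent spectral sequences as the bookkeeping device. First I would fix a smooth $X/S$ and work with the pro-system $X_\bullet=(X_n)_n$. The backbone of the argument is the comparison of the two descent spectral sequences $^K\!E_2^{i,j}(X_\bullet)$ and $^{\rm mot}\!E_2^{i,j}(X_\bullet)$, together with the map $\ch_r$ between them from \eqref{com.ch12}. Since both spectral sequences are convergent and $\ch_r$ is a map of spectral sequences, it suffices to prove that $\ch=\bigoplus_r \ch_r$ induces an isomorphism on $E_2$-pages in the relevant bidegrees; a spectral sequence comparison argument (in the abelian category of pro-systems of $\Q$-vector spaces, where taking inverse limits is exact on the Mittag-Leffler systems at hand) then upgrades this to an isomorphism of abutments.

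The key reduction is to use the relative/absolute decomposition. For each $n$ we have the exact sequence relating $K_*(X_n)$ to $K_*(X_1)$ and the relative groups $K_*(X_n,X_1)$, and likewise for the cohomology of $\Z_{X_n}(r)$ relative to $\Z_{X_1}(r)$ (the latter built into the defining square \eqref{motcompdef}). I would therefore split the verification into two parts. On the closed fiber, \eqref{com.ch3} already gives that $\ch$ is an isomorphism $K_i(X_1)_\Q\xrightarrow{\sim}\bigoplus_r H^{2r-i}(X_1,\Q_{X_1}(r))$, which handles the $E_2$-terms coming from $X_1$. On the relative part, \eqref{com.ch2} records that $\ch_r\colon {}^K\!E_2^{i,j}(X_\bullet,X_1)\to {}^{\rm mot}\!E_2^{i,j+2r}(X_\bullet,X_1)$ is a pro-isomorphism exactly when $r=-j$ and vanishes otherwise; this is precisely the input supplied by Proposition~\ref{com.main}. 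Combining these two facts across the long exact sequences (of $K$-groups and of motivic cohomology) relating absolute and relative theories, a five-lemma argument at the level of pro-systems yields that $\ch$ is a pro-isomorphism on $E_2$-terms in every bidegree, hence on the abutment.

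The one genuinely delicate point is convergence and the passage from $E_2$-pages to abutments for \emph{pro-spectra}. The Zariski descent spectral sequence for non-connective $K$-theory \cite[Sec.~10]{TT} converges for each fixed $n$, but I must ensure that the comparison remains valid after passing to the pro-system, i.e.\ that no $\varprojlim^1$ or lack of strong convergence obstructs the identification of abutments. Here the crucial structural fact is that the relative $E_2$-terms are concentrated, after applying $\ch_r$, along the single line $r=-j$ (by \eqref{com.ch2}), so the relevant spectral sequences degenerate in the relative direction and the abutment is read off directly from the surviving line; this collapses the potential convergence difficulties. I would phrase the argument so that all limits are taken in the category of pro-objects, where the descent spectral sequence is a map of pro-systems and the comparison isomorphism on $E_2$ is a pro-isomorphism, invoking the formalism of pro-spectra of \cite{Isa} for the homotopy-theoretic statements.

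I expect the main obstacle to be not any single computation but the careful handling of the relative Chern character $\ch_r$ in \eqref{com.ch2} as a morphism of pro-systems, and in particular verifying that it vanishes for $r\ne -j$ and is a pro-isomorphism for $r=-j$ uniformly enough to control the abutment. This is where Proposition~\ref{com.main} is doing the real work: it identifies the relative Milnor $K$-groups with the relative Quillen $K$-groups via the $d\log$ map and the cartesian square \eqref{7}, and the weight decomposition of $\ch$ must be matched against the grading of the motivic complex \eqref{motcompdef}. Once this matching is pinned down, the rest is the formal spectral sequence comparison sketched above. Because the excerpt states these techniques are standard, I would keep the write-up at the level of assembling \eqref{com.ch2} and \eqref{com.ch3} through the descent spectral sequences rather than reproving the underlying descent or Goodwillie inputs.
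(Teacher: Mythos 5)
Your proposal follows essentially the same route as the paper: the paper likewise assembles the pro-isomorphism from the Zariski descent spectral sequences of \cite{TT}, using Proposition~\ref{com.main} (via \eqref{com.ch2}) to handle the relative terms, which are concentrated on the line $r=-j$, and Bloch's isomorphism \eqref{com.ch3} on the closed fiber $X_1$, combined through the long exact sequences of relative and absolute theories in the category of pro-objects. Your additional care about convergence and the pro-spectrum formalism of \cite{Isa} fills in details the paper deliberately leaves as ``standard.''
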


Observing that for any $n\ge 1$ there is a canonical isomorphism
\eq{}{
H^{2r}(X_1, \Z_{X_n}(r)) \xrightarrow{\sim} \CH^r(X_n)
}
we see that the pro-isomorphism \eqref{com.chiso2} comprises the pro-isomorphism \eqref{com.chiso}.

\begin{appendix}
 \section{Two versions of Grothendieck's conjecture} \label{groth.conj}

\noindent
In the introduction we stated  as Conjecture~\ref{int.conj}  a local version  of Grothendieck's principle of the parallel transport of
cycles, which we will refer to as the {\em infinitesimal Hodge conjecture} in
the following. His original formulation, today called {\em variational Hodge conjecture}, is more global and we show in this appendix
that the two formulations are equivalent.

 \medskip
 
Let $k$ be a field of characteristic $0$.
  Let $f: \sX\to \sS$ be a smooth projective morphism, where $\sS/k$ is a smooth variety.
  Fix a point $s\in \sS$ and let $\sX_s$ be the fiber over $s$.

Grothendieck's original conjecture \cite[p.~103]{GdR}  can now be stated as follows.
\begin{conj}[Variational Hodge]\label{ap.conjgro}
For $\xi_s\in K_0(\sX_s)_\Q$ the following are equivalent:
\begin{itemize}
\item[(i)] $\ch(\xi_s)\in H_\dR^*(\sX_s/s)$ lifts to an element  in $H_\dR^*(\sX/k) $,
\item[(ii)] there is an element $\xi\in K_0(\sX)_\Q$ with $\ch(\xi|_{\sX_s}) = \ch(\xi_s)$.
\end{itemize}
\end{conj}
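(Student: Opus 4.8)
The plan is to prove the two implications separately, since they are of completely different depth. The implication (ii)$\Rightarrow$(i) is elementary and requires neither the Chow--K\"unneth hypothesis nor the deformation theory of the body of the paper: given $\xi\in K_0(\sX)_\Q$ with $\ch(\xi|_{\sX_s})=\ch(\xi_s)$, the class $\ch(\xi)$ already lies in $H^*_\dR(\sX/k)$, and because the Chern character is compatible with the restriction $K_0(\sX)\to K_0(\sX_s)$ and with the induced pullback $H^*_\dR(\sX/k)\to H^*_\dR(\sX_s/s)$, the image of $\ch(\xi)$ in $H^*_\dR(\sX_s/s)$ equals $\ch(\xi|_{\sX_s})=\ch(\xi_s)$. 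Thus $\ch(\xi)$ is the required lift and (i) holds. This is the global analogue of the trivial implication (iii)$\Rightarrow$(ii)$\Rightarrow$(i) recorded in the introduction.

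The entire content lies in the converse (i)$\Rightarrow$(ii), which is Grothendieck's conjecture proper; my plan is to route it through the infinitesimal theory. First I would reduce the global base to a one-parameter formal deformation: restricting $\sX$ to a smooth formal curve through $s$ and completing puts one in the situation $S=\Spec k[[t]]$, $X=\sX\times_\sS S$, with closed fiber $X_1=\sX_s$ and generic fiber $X_\eta$. Condition (i) is inherited by each such restriction, and I would translate it into the hypothesis of Theorem~\ref{main.thm}: the global de Rham lift restricts to a section that is horizontal for the Gauss--Manin connection over the disk, so via the isomorphism $\Phi$ of \eqref{int.solv} one obtains $\Phi^{-1}\circ\ch(\xi_s)\in\bigoplus_i H^{2i}_\dR(X/S)^\nabla\cap F^i H^{2i}_\dR(X/S)$, using that the algebraic cycle class respects both $\nabla$ and the Hodge filtration.

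Granting the Chow--K\"unneth property ${\bf (CK)}_{X_\eta}^i$ for even $i$ --- valid for abelian schemes by Remark~\ref{int.rmkab} --- Theorem~\ref{main.thm} now yields a \emph{formal} deformation $\hat\xi\in(\varprojlim_n K_0(X_n))\otimes\Q$ with $\ch(\hat\xi|_{X_1})=\ch(\xi_s)$. The final and genuinely hard step is then \emph{algebraization}: one must lift $\hat\xi$ along the map \eqref{intr.algmap} to an honest element of $K_0(\sX)_\Q$ and check that the globalized class still agrees with $\ch(\xi_s)$ over all of $\sS$, not merely along the chosen curve. I expect this to be the main obstacle, and it is precisely the point at which the conjecture remains open: unlike the case of line bundles, where formal existence makes $\Pic(X)\to\varprojlim_n\Pic(X_n)$ an isomorphism, the map $K_0(\sX)\to\varprojlim_n K_0(X_n)$ fails to be surjective in general, as the counterexample of Appendix~\ref{counterex} demonstrates. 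A complete proof would therefore have to exploit extra structure on the deformation problem --- for abelian schemes, the eigenspace algebraization asked for in Question~\ref{main.ques} --- to cross from the formal deformation to a global cycle; establishing such an algebraization result is the crux of (i)$\Rightarrow$(ii), and its solvability is exactly what Conjecture~\ref{int.conj} asserts.
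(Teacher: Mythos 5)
You are reviewing a statement the paper itself does not prove: Conjecture~\ref{ap.conjgro} is Grothendieck's variational Hodge conjecture and remains open, and what the paper establishes is only its equivalence with the infinitesimal Conjecture~\ref{int.conj} (Proposition~\ref{ap1.prop}) together with the conditional Theorem~\ref{main.thm}. Your framing is therefore right in spirit: (ii)$\Rightarrow$(i) is elementary exactly as you argue, and your route for (i)$\Rightarrow$(ii) --- restrict to a one-parameter formal deformation, invoke Theorem~\ref{main.thm} under ${\bf (CK)}_{X_\eta}^{i}$, and isolate the algebraization of \eqref{intr.algmap} as the open crux, with Question~\ref{main.ques} and the counterexample of Appendix~\ref{counterex} correctly cited --- reproduces the paper's architecture.

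Two steps in your reduction are nevertheless genuinely incomplete. First, the translation of (i) into hypothesis (i) of Theorem~\ref{main.thm} is not automatic. A lift $\alpha\in H^*_\dR(\sX/k)$ of $\ch(\xi_s)$ does restrict to a $\nabla$-horizontal class over the disk, but nothing forces that horizontal class to lie in $F^iH^{2i}_\dR(X/S)$: membership in the Hodge filtration along \emph{all} of $S$ is precisely the nontrivial content of hypothesis (i), and the fact that $\ch(\xi_s)$ lies in $F^i$ on the closed fiber does not propagate; your phrase ``the algebraic cycle class respects the Hodge filtration'' only applies fiberwise. The paper repairs this with Deligne's partie fixe \cite[4.1]{DH2}, replacing $\alpha$ by a lift in the image of $\bigoplus_i H^{2i}(\sX,\Omega^{\ge i}_{\sX/k})\to H^*_\dR(\sX/k)$, whose restriction then does lie in the filtration --- a real ingredient, not bookkeeping. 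Second, you conflate two distinct difficulties at the end. The passage from a class in $K_0(X)_\Q$ over $\Spec k[[t]]$ back to $K_0(\sX)_\Q$ is \emph{not} open: the paper's proof of Proposition~\ref{ap1.prop} achieves it by Artin approximation \cite{Ar} over the henselization plus a transfer argument, combined with an induction on $\dim\sS$ through smooth hypersurface sections and an auxiliary field $k'\subset k(s')$ to reduce to curves (a single formal curve through $s$ cannot by itself control the class over all of $\sS$). The only open obstacle is the one you also name, the formal-to-$k[[t]]$ algebraization in \eqref{intr.algmap}, i.e.\ the gap between condition (ii) of Theorem~\ref{main.thm} and condition (iii) of Conjecture~\ref{int.conj}. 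As written, your outline thus proves the paper's conditional corollary (Theorem~\ref{main.thm} plus a positive answer to Question~\ref{main.ques} implies the conjecture for abelian schemes) rather than the conjecture itself, and even the unconditional parts of the reduction still require the partie fixe and approximation steps above.
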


\begin{prop}\label{ap1.prop}
The variational Hodge conjecture (Conjecture~\ref{ap.conjgro}) for all $k,\sX,\sS$ as
above is equivalent to the infinitesimal Hodge conjecture (Conjecture~\ref{int.conj}) for all $k,X$ as in the introduction.
\end{prop}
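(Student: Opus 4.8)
The plan is to prove the equivalence by passing between the global situation over a smooth variety $\sS$ and the infinitesimal situation over $S = \Spec k[[t]]$, in both directions. The key technical tool is that condition (i) in each conjecture is a \emph{Hodge-theoretic} condition that can be detected on formal or algebraic curves through the point $s$, using the fact that the Gauss--Manin connection controls how de Rham classes deform. I will use throughout the identifications $\Phi: H^*_\dR(X/S)^\nabla \xrightarrow{\sim} H^*_\dR(X_1/k)$ and the reformulation of condition (i) of Theorem~\ref{main.thm} in terms of $\nabla$-flat classes lying in the Hodge filtration.

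First I would show the infinitesimal conjecture implies the variational one. Given $f:\sX \to \sS$ and $s \in \sS$ with $\xi_s$ satisfying (i), the plan is to reduce to a formal neighborhood. Choosing a smooth curve (or more precisely a formal arc) $S \to \sS$ through $s$, one pulls back to obtain $X/S$ with $X_1 = \sX_s$, and one checks that the lifting condition (i) of Conjecture~\ref{ap.conjgro} translates into condition (i) of Theorem~\ref{main.thm}: a de Rham class lifts to $H^*_\dR(\sX/k)$ precisely when its restriction to each such arc is flat for the Gauss--Manin connection and stays in the Hodge filtration. Applying Conjecture~\ref{int.conj} along the arc produces $\xi \in K_0(X)_\Q$ with the correct Chern character on $X_1$; the delicate point is to spread this formal/local solution out to a genuine global cycle on $\sX$, and to argue that flatness along \emph{one} sufficiently general arc, combined with the algebraicity of $\xi_s$, suffices to pin down the global lift. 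This is where I expect to invoke that the locus where a Hodge class deforms is itself algebraic.

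Conversely, to deduce the infinitesimal conjecture from the variational one, I would argue that any $X/S$ over $S = \Spec k[[t]]$ arises, after a suitable approximation, as the formal completion of a global smooth projective family $\sX \to \sS$ at a point $s$, with $X_1 = \sX_s$. One realizes the formal deformation as coming from an algebraic family over a smooth $k$-variety $\sS$ (possible in characteristic zero by standard spreading-out and approximation arguments, e.g. Artin approximation), so that condition (i) of Theorem~\ref{main.thm} for $X/S$ matches condition (i) of Conjecture~\ref{ap.conjgro} for $\sX/\sS$ at $s$. Then Conjecture~\ref{ap.conjgro} yields $\xi \in K_0(\sX)_\Q$, whose restriction to the formal neighborhood gives the element required in condition (iii) of Conjecture~\ref{int.conj}.

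The main obstacle, I expect, is the passage between the formal/local and the global algebraic settings in a way that preserves condition (i) exactly. On the Hodge-theoretic side this means showing that ``lifts to $H^*_\dR(\sX/k)$'' (a global flatness statement over all of $\sS$) is equivalent to the infinitesimal flatness condition $\Phi^{-1}\circ\ch(\xi_1) \in \bigoplus_i H^{2i}_\dR(X/S)^\nabla \cap F^i$ along arcs; the content here is that a section of the de Rham bundle that is flat and Hodge along every formal arc through $s$ actually extends to a global flat Hodge class, which rests on the integrability of the Gauss--Manin connection and the algebraicity of the Hodge locus. On the cycle side, the obstacle is the algebraization/spreading-out needed to convert a cycle defined over the formal disk into one defined over $\sS$ — precisely the type of issue flagged for the map \eqref{intr.algmap} in the introduction — which is why the equivalence is between the two conjectures rather than an unconditional theorem.
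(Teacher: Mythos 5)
Your overall architecture --- pull back along an arc in one direction, realize the formal family inside an algebraic one and use algebraicity of Hodge loci in the other --- is indeed the paper's, but at both of the points you yourself flag as ``delicate'' the proposal either omits the decisive tool or names the wrong one, and those steps carry the whole proof. For \emph{infinitesimal $\Rightarrow$ variational}: after applying Conjecture~\ref{int.conj} along an arc you hold a class in $K_0(\sX\times_\sS \Spec k[[t]])_\Q$ and must produce one in $K_0(\sX)_\Q$ with the right Chern character on $\sX_s$. Algebraicity of the Hodge locus cannot do this: it is a statement about loci in $\sS$, not a device for descending $K$-theory classes from a formal disk; this is exactly the kind of algebraization problem the introduction flags for the map \eqref{intr.algmap}. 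The paper's mechanism is: (a) reduce by induction to $\dim\sS=1$, cutting by smooth hypersurfaces through $s$ and lifting along auxiliary curves --- this is not cosmetic, since only when $\dim \sS=1$ is $\Spec k[[t]]$ the honest completion of $\sO_{\sS,s}$; (b) apply Artin approximation \cite{Ar} to trade the class over the completion for a class over the henselization agreeing with it on $\sX_s$; (c) a transfer argument from an \'etale neighborhood to land in $K_0(\sX)_\Q$. A single formal arc through a higher-dimensional $\sS$, as in your sketch, is not the formal neighborhood of $s$, so Artin approximation has no purchase and the class never leaves the disk. Separately, your asserted translation ``lifts to $H^*_\dR(\sX/k)$ iff flat and in $F^i$ along arcs'' requires Deligne's theorem of the fixed part \cite[4.1]{DH2} in the forward direction: condition (i) of Conjecture~\ref{ap.conjgro} supplies only a de Rham lift, and one must first correct it to lie in the image of $\bigoplus_i H^{2i}(\sX,\Omega^{\ge i}_{\sX/k})$ before its pullback to the arc verifies condition (i) of Theorem~\ref{main.thm}.

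For \emph{variational $\Rightarrow$ infinitesimal} the gap is sharper: $X/S$ is essentially never the formal completion of an algebraic family at a closed point, because $k$ may have infinite transcendence degree over $\Q$, so no approximation identifies $k[[t]]$ with the completed local ring of a $k$-variety (and Artin approximation, which you invoke here, is the tool for the \emph{other} direction). The paper instead reduces to $k\supset \CC$, descends $(X,\xi_1)$ by EGA-style spreading out to a local ring $R\subset k[[t]]$ essentially of finite type over $\Q$, extends the associated complex family to a smooth projective $f:\sX\to\sS$ over $\CC$, and thereby obtains $\gamma : S\to \sS$ carrying the closed point to $s$ and \emph{the generic point to the generic point of $\sS$}. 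This density of the image is what makes one arc suffice: by \cite{CDK} the component $L_0$ of the Hodge locus through $\ch(\xi_s)$ is finite over $\sS$, and since $\gamma^*(L_0)$ is all of $S$ (your hypothesis (i) of Theorem~\ref{main.thm}), $L_0\to\sS$ is an isomorphism after an \'etale localization; hence $\ch(\xi_s)$ is monodromy invariant, and Deligne's degeneration of the Leray spectral sequence lifts it to $H^*_\dR(\sX/\CC)$, which is what lets Conjecture~\ref{ap.conjgro} be applied. Your version asks for flatness ``along every formal arc through $s$,'' but the hypothesis of Conjecture~\ref{int.conj} hands you exactly one arc; unless that arc is arranged to dominate $\sS$ --- which is precisely what the descent-to-finite-type step achieves --- the appeal to CDK yields nothing, and the claimed matching of the two conditions (i) is unsubstantiated.
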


\begin{rmk}
The same proof shows that the variational Hodge conjecture for abelian schemes $\sX/\sS$ is
equivalent to the infinitesimal Hodge conjecture for abelian schemes $X/S$.
\end{rmk}

\begin{proof} 
\hspace{10pt}
{\em Infinitesimal Hodge $\Longrightarrow$ Variational Hodge:}

By induction on $\dim(\sS)$ we will reduce to $\dim(\sS)=1$. In order to do this
observe first that we can replace without loss of generality $\sS$ by a dense open
subscheme containing $s$ and $\sX$ by the corresponding pullback. 

Now let $\xi_s$ satisfy  Conjecture~\ref{ap.conjgro}(i) and assume without loss of
generality that $\codim(s)>0$.  Choose a smooth
hypersurface $\sS'\subset \sS$ containing $s$, which exists after possibly replacing $\sS$ by a dense
open subscheme, and set $\sX' = \sX \times_\sS \sS'$. By the induction
assumption there is $\xi'\in K_0(\sX')_\Q$  with  $\ch(\xi'|_{\sX_s}) = \ch(\xi_s)$.

 Let $s'$ be the generic point of $\sS'$ and choose an extension of fields $k\subset
 k'\subset k(s')$ such that the second inclusion is finite and such that there exists a
 lift $k' \to \sO_{\sS,s'}$. This lift gives rise to a curve $\sS''/k'$ mapping to $\sS$
 as schemes over $k$ such that $s'$ is contained in the image. Now one applies the one-dimensional case of
 Conjecture~\ref{ap.conjgro} to the family $$\sX''=\sX \times_{\sS} \sS''\to \sS'' \text{ with
   class } \xi'|_{s'}\in K_0(\sX_{s'})_\Q $$
to get a lifted class  $ \xi''\in K_0(\sX'')_\Q$. Finally, $\sX''\subset \sX$ is an
inverse limit of open immersions of regular schemes, so we can extend $\xi''$ to a class $\xi\in
K_0(\sX)_\Q$, which will then satisfy the requested Conjecture~\ref{ap.conjgro}(ii).

\medskip

Now we assume $\dim(\sS)=1$. Without loss of generality $k=k(s)$. Using Deligne's
partie fixe \cite[4.1]{DH2} we can assume without loss of
generality that the lift $\alpha$ of $\ch(\xi_s)$ in Conjecture~\ref{ap.conjgro}(i) lies
in the image of
\[ 
\bigoplus_i H^{2i}(\sX , \Omega^{\ge i}_{\sX /k})  \to H_\dR^*(\sX/k).
\]
 The completion of
$\sO_{\sS,s}$ along the maximal ideal is isomorphic to $k[[t]]$. So define $X=
\sX \times_\sS S$, where $S=\Spec k[[t]]$ and apply Conjecture~\ref{int.conj} to the class
$\xi_1=\xi_s$ and the flat lift $\alpha|_X$ of $\ch(\xi_1)$ to get a lifted class $\dot\xi \in K_0(X)_\Q$. 

Denote by $S^{\rm h}$ the spectrum of the henselization of $\sO_{\sS,s}$ and by $X^{\rm h}$ the pullback
$\sX \times_\sS S^{\rm h}$. By Artin approximation \cite{Ar} there is a class $\ddot \xi \in
K_0(X^{\rm h})_\Q$ with $\dot \xi |_{\sX_s} = \ddot \xi|_{\sX_s}$. By a standard transfer
argument we get from $\ddot \xi$  a class $\xi \in K_0(\sX)_\Q$ with the requested
property of Conjecture~\ref{ap.conjgro}(ii).  

\medskip

\hspace{10pt}
{\em  Variational Hodge   $\Longrightarrow$   Infinitesimal Hodge:}

Let $\xi_1$ satisfy property (i) of Theorem~\ref{main.thm}.
The idea is roughly the following:
\begin{itemize}
\item[(1)] 
reduce to a situation where  $X\to S$
`extends' to a morphism between complex varieties $\sX \to \sS$, 
\item[(2)]
 use complex Hodge
theory in order to show that $\ch(\xi_1)$ extends as a de
Rham cohomology class to $H^*_{\dR}(\sX/\CC)$ so that we can apply
Conjecture~\ref{ap.conjgro}.
\end{itemize}

By a simple reduction we can assume without loss of generality that $k$ contains $\CC$. 
 Then as step (1) we find a local subring $R\subset k[[t]]$ with maximal ideal $m$ which is essentially of finite
type over $\Q$ and such that $X$ descends to a projective smooth scheme $X_R$ over $R$ and
such that
$\xi_1$ descends to a class in $K_0(X_R \otimes R/m)_\Q$. Such an $R$ exists by the
techniques of \cite[Sec.~IV.8]{EGA4}. By resolution of singularities we can assume
without loss of generalities that $R$ is regular. Choose a subfield $k' \subset R $ such
that the field extension $k' \subset R/m$ is finite and such that $k'$ is algebraically
closed in $R$. Now we can extend $$ \Spec (X_R
\otimes_{k'} \CC) \to \Spec ( R\otimes_{k'} \CC)$$
 to a smooth projective morphism $f:\sX \to \sS$ of smooth varieties over $\CC$.
There is a canonical morphism $\gamma: S \to \sS$ over $\CC$.
In other words we get a cartesian square 
\[
\xymatrix{
X \ar[r]  \ar[d] & \sX \ar[d]^f \  \\
S \ar[r]_\gamma  &  \sS 
}
\]
 The map $\gamma$ maps the closed point of $S$ to a closed point $s\in \sS$ and the
 generic point of $S$ to the generic point of $\sS$.
 There is an induced class $\xi_s \in K_0(\sX_s)_\Q$, which
$\gamma$ pulls back to our originally given class $\xi_1\in K_0(X_1)_\Q$.

\medskip

We claim (step (2)) that the de Rham class $\ch(\xi_s) \in H_\dR^*(\sX_s/\CC) $ extends
to  $H^*_\dR(\sX/\CC)$ after possibly replacing $\sS$ by an \'etale neighborhood of $s$. This will allow us to apply  Conjecture~\ref{ap.conjgro} to
obtain a class $\dot \xi \in K_0(\sX)_\Q$, so that the requested class  in $K_0(X)_\Q$ from
Conjecture~\ref{int.conj}(iii) is given by $\xi=\gamma^*(\dot \xi)$.
This will finish the proof of Proposition~\ref{ap1.prop}.

To show the claim let $f^\CC:\sX(\CC) \to \sS(\CC)$ be the induced map of complex
manifolds and consider the local system $L=R^*f^\CC_* \Q$ on $\sS(\CC)$ (we omit any Tate
twists). We think of $L$ as an \'etale manifold over $\sS(\CC)$. In this sense let $L_0$ be
the connected component of the Betti Chern character class $\ch(\xi_s) \in H^*_{\rm
  B}(\sX_s(\CC))$ in the unramified complex space $$L \cap \bigoplus_{i\ge 0}  R^{2i} f^\CC_*
(\Omega^{\ge i}_{\sX/\sS}) $$ over $\sS(\CC)$.

By \cite{CDK} we know that $L_0$ is finite over $\sS(\CC)$ and therefore given by a finite
unramified scheme over $\sS$, which we denote by the same letter. After replacing $\sS$ by
an \'etale neighborhood of $s$ we can therefore assume that $L_0\to \sS$ is a closed immersion. 

 Clearly, $\gamma^*(L_0)$
contains the locus where $\Phi^{-1} \circ \ch(\xi_1)$ lies in the Hodge filtration
\[
\bigoplus_{i\ge 0} F^iH_{\dR}^{2i}(X/S)
\]
 (the
map $\Phi$ is defined in \eqref{int.solv}). By our assumption on $\xi_1$ this locus is all
of $S$. So we get that $L_0 \to \sS$ is an isomorphism, since $\gamma$ has dense image.
This means that the monodromy action of $\pi_1(\sS(\CC),s)$ on $H^*_{\rm
  B}(\sX_s(\CC))$ fixes $\ch(\xi_s)$. By the degeneration of the Leray spectral sequence \cite[4.1]{DH2}  the cohomology
class $\ch(\xi_s) \in H_\dR^*(\sX_s/\CC)$ extends to $H_\dR^*(\sX/\CC)$, proving the claim.

\end{proof}

\section{A counterexample to algebraization}   \label{counterex}

\noindent
In this section, we show that algebraization of $K_0$-classes of vector bundles does not
hold in general, i.e.\ the map~\eqref{intr.algmap} is usually `far' from beeing an
isomorphism. For a precise statement see Proposition~\ref{prop:ex}. The idea is to consider a `pro-$0$-cycle' on the trivial deformation over $\CC[[t]]$ of a
smooth projective variety $Y/\CC$  whith $p_g >0$. Roughly speaking we construct such a pro-$0$-cycle whose
top Chern class in absolute Hodge cohomology `jumps' around so much in the pro-system that it cannot  come from absolute
differential forms on $Y \otimes_\CC \CC[[t]]$.

\smallskip

 We start the discussion by certain elementary observations about absolute differential forms. One defines a weight function
on differential forms $\tau \in \Omega^2_{\CC/\Q}$ by
 \ga{}{  w(\tau):= \min\{n\ |\ \tau = \sum_{i=1}^n a_idb_i\wedge dc_i, \ a_i, b_i, c_i \in \CC .\}  \notag}
 The function 
 $w$ is subadditive in the sense that 
 \ga{}{w(\tau_1+\ldots + \tau_p) \le \sum w(\tau_i). \notag}
\begin{lem}\label{lem1d} Let $\tau = \sum_{i=1}^n db_i\wedge dc_i$ and assume all the $b_i, c_i$ are algebraically independent elements in $\CC$. Then $w(\tau) = n$.
\end{lem}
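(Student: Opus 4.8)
The plan is to interpret $w(\tau)$ as a lower bound for the minimal number of decomposable terms needed to express $\tau$ inside the exterior algebra $\Omega^\bullet_{\CC/\Q} = \bigwedge^\bullet_\CC \Omega^1_{\CC/\Q}$, and then to compute this number by a top wedge-power argument. First I would record the easy upper bound: the displayed expression $\tau = \sum_{i=1}^n db_i\wedge dc_i$ exhibits $\tau$ as a sum of $n$ admissible terms (with all coefficients $a_i=1$), so $w(\tau)\le n$ directly from the definition. For the reverse inequality, observe that every summand $a_i\, db_i\wedge dc_i$ allowed in the definition of $w$ is a decomposable $2$-vector $(a_i\, db_i)\wedge dc_i$ in the $\CC$-vector space $\bigwedge^2_\CC\Omega^1_{\CC/\Q}$. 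Hence $w(\tau)$ is bounded below by the least integer $m$ for which $\tau=\sum_{j=1}^m u_j\wedge v_j$ with $u_j,v_j\in\Omega^1_{\CC/\Q}$, and it suffices to show this purely linear-algebraic decomposable rank equals $n$.

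Next I would use that the differentials $db_1, dc_1,\ldots,db_n,dc_n$ are $\CC$-linearly independent in $\Omega^1_{\CC/\Q}$. This is the single place where the hypothesis enters: since the $b_i,c_i$ are algebraically independent over $\Q$, one may extend them to a transcendence basis $B$ of $\CC/\Q$. As $\CC/\Q(B)$ is algebraic and we are in characteristic zero it is separable, so $\Omega^1_{\CC/\Q(B)}=0$ and $\{db : b\in B\}$ is a $\CC$-basis of $\Omega^1_{\CC/\Q}$. Writing $e_1=db_1,\ e_2=dc_1,\ \ldots,\ e_{2n}=dc_n$, the vectors $e_1,\ldots,e_{2n}$ are linearly independent and $\tau=\sum_{i=1}^n e_{2i-1}\wedge e_{2i}$.

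Finally I would run the wedge-power argument. The $n$-th exterior power computes to $\tau^{\wedge n}=n!\, e_1\wedge e_2\wedge\cdots\wedge e_{2n}$, which is nonzero precisely because the $e_i$ are linearly independent. On the other hand, if $\tau=\sum_{j=1}^m u_j\wedge v_j$ with $m<n$, then expanding $\tau^{\wedge n}$ yields only terms in which some factor $u_j\wedge v_j$ occurs at least twice, and $(u_j\wedge v_j)\wedge(u_j\wedge v_j)=0$ forces $\tau^{\wedge n}=0$, a contradiction. Hence $m\ge n$, so $w(\tau)\ge n$, and together with the upper bound this gives $w(\tau)=n$.

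The only genuinely substantive ingredient is the linear independence of the differentials, which rests on the characteristic-zero structure theory of Kähler differentials for field extensions; once that is in hand, the rest is the standard fact that the decomposable rank of a symplectic-type $2$-vector is detected by its top exterior power. I expect the linear independence step to be the conceptual crux, while the wedge-power computation is routine.
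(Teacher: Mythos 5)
Your proof is correct and follows essentially the same route as the paper's: the trivial upper bound, then the observation that $\tau^{\wedge n}=n!\,db_1\wedge dc_1\wedge\cdots\wedge db_n\wedge dc_n\neq 0$ while any expression of $\tau$ as a sum of fewer than $n$ decomposable terms forces $\tau^{\wedge n}=0$. The only difference is that you spell out the transcendence-basis argument for the linear independence of the differentials, which the paper asserts in one clause; this is a welcome elaboration, not a departure.
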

\begin{proof}Clearly $w(\tau) \le n$. If $w(\tau) <n$ we can write 
\eq{1c}{\tau = \sum_{j=1}^{n-1}\alpha_jd\beta_j\wedge d\gamma_j.
}

The $n$-fold wedge $\wedge^n\tau=\tau\wedge\cdots\wedge\tau \in \Omega^n_{\CC}$ is equal to  $n!db_1\wedge dc_1\wedge db_2\wedge\cdots\wedge dc_n$, which is non-zero in $\Omega^{2n}_{\CC}$ as the $a_i, b_j$ are algebraically independent.
On the other hand, if \eqref{1c} holds, then $\wedge^n\tau = 0$ in $\Omega^{2n}_{\CC}$, a contradiction.
\end{proof}

Let $R=\CC[[t]]$ and write $R_n = R/t^nR$.

The choice of a parameter $t$ yields a natural splitting
\ga{sp}{  \Omega^i_{\CC}\otimes_{\CC} R_n \to \Omega^i_{R_n} \to  \Omega^i_{\CC}\otimes_{\CC} R_n  }
which is compatible in the pro-system in $n$. Thus it defines a homomorphism
\ga{cp}{\Omega^2_{R}\to \varprojlim_n  \Omega^2_{\CC}\otimes_{\CC} R_n =: \Omega^2_{\CC}\widehat \otimes R\\
\gamma = \sum_{i=1}^N f_i(t)dg_i(t)\wedge dh_i(t) \mapsto  \tilde{\gamma}=\sum_{p=1}^\infty t^p\tau_p,  \notag \\ f_i, g_i, h_i \in R, \ \tau_p\in \Omega^2_{\CC}.  \notag}

\begin{lem}\label{lem1b} One has $w(\tau_p) \le N\binom{p+2}{p}$. 
\end{lem}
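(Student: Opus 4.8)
The plan is to estimate $w(\tau_p)$ by counting how many elementary wedge terms $da\wedge db$ can appear in the coefficient of $t^p$ after expanding the image $\tilde\gamma$ of a single generator. Since $w$ is subadditive and $\gamma = \sum_{i=1}^N f_i\, dg_i\wedge dh_i$ has $N$ summands, it suffices to treat one summand $f\,dg\wedge dh$ with $f,g,h\in R=\CC[[t]]$ and show that the resulting $\tau_p$ satisfies $w(\tau_p)\le \binom{p+2}{p}$; summing over the $N$ generators then gives the claimed bound by subadditivity.

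First I would write $f=\sum_a f_a t^a$, $g=\sum_b g_b t^b$, $h=\sum_c h_c t^c$ with $f_a,g_b,h_c\in\CC$, and apply the splitting \eqref{sp} to compute $dg\wedge dh$ in $\Omega^2_\CC\widehat\otimes R$. Under the natural splitting the absolute differential $dg$ of $g=\sum_b g_b t^b$ maps to $\sum_b t^b\, dg_b$ (the $dt$-part being discarded by the projection to $\Omega^i_\CC\otimes R_n$), and similarly for $dh$. Thus $f\,dg\wedge dh$ maps to
\[
\Bigl(\sum_a f_a t^a\Bigr)\Bigl(\sum_b t^b\,dg_b\Bigr)\wedge\Bigl(\sum_c t^c\,dh_c\Bigr)
=\sum_{p\ge 0} t^p \sum_{a+b+c=p} f_a\, dg_b\wedge dh_c.
\]
Hence the coefficient $\tau_p$ of $t^p$ coming from this one generator is $\sum_{a+b+c=p} f_a\,dg_b\wedge dh_c$, a sum of elementary terms of the form $(\text{scalar})\cdot dg_b\wedge dh_c$.

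Next I would bound $w(\tau_p)$ by the number of such elementary terms, namely the number of triples $(a,b,c)$ of nonnegative integers with $a+b+c=p$. By the standard stars-and-bars count this equals $\binom{p+2}{2}=\binom{p+2}{p}$, and each term $f_a\,dg_b\wedge dh_c$ has weight at most $1$. So $w(\tau_p)\le\binom{p+2}{p}$ for the contribution of a single generator, and summing the $N$ generators via subadditivity yields $w(\tau_p)\le N\binom{p+2}{p}$, as claimed.

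The only genuinely delicate point is the precise behavior of the splitting \eqref{sp} on absolute differentials: one must confirm that, under the chosen parameter $t$, the element $dg_b\in\Omega^1_\CC$ really is what appears (with no spurious $dt$ contributions surviving the projection), so that the count of monomials $t^{a+b+c}$ is exactly controlled by $a+b+c=p$. Everything else is the elementary weight bound $w(dg_b\wedge dh_c)\le 1$ together with subadditivity, so I expect that verifying the compatibility of the splitting with $d$ in the pro-system is where the care is needed.
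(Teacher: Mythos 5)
Your proposal is correct and follows essentially the same argument as the paper: reduce to a single summand $f\,dg\wedge dh$ by subadditivity of $w$, expand $f,g,h$ as power series so that the coefficient of $t^p$ becomes $\sum_{a+b+c=p} f_a\,dg_b\wedge dh_c$, and bound the weight by the stars-and-bars count $\binom{p+2}{2}=\binom{p+2}{p}$ of such triples. Your extra care about the splitting \eqref{sp} discarding the $dt$-contributions is a sound clarification of a point the paper leaves implicit, but it does not change the argument.
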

\begin{proof}Suppose first $N=1$ and $\omega = fdg\wedge dh$. Write $f=\sum_{j=0}^\infty f^{(j)}t^j$ and similarly for $g, h$. Then 
\eq{}{fdg\wedge dh = \sum_p t^p\sum_{i+j+k=p}f^{(i)}dg^{(j)}\wedge dh^{(k)}. 
}
the inner sum has $\binom{p+2}{p}$ terms, and the result follows in this case simply by the definition of $w$. For $N$ general, we conclude by subadditivity of $w$. 
\end{proof}

\begin{lem}\label{lem3c}
 Let 
 \ga{}{ \tilde\eta =   \sum_{p=0}^\infty t^p\eta_p \in   \Omega^2_\CC  \widehat\otimes\CC[[t]]. \notag}
  Assume 
\[\limsup_{p\to \infty} \frac{ w(\eta_p)}{p^2} = \infty. 
\]
Then $\tilde\eta$ does not lift  via \eqref{cp} to an element in $\Omega^2_{\CC[[t]]}$. 
\end{lem}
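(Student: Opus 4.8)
The plan is to argue by contradiction. I would suppose that $\tilde\eta$ does admit a lift $\gamma\in\Omega^2_{\CC[[t]]}$ under the map \eqref{cp}, and then extract from Lemma~\ref{lem1b} a purely quadratic bound $w(\eta_p)=O(p^2)$, in direct conflict with the hypothesis $\limsup_{p\to\infty} w(\eta_p)/p^2=\infty$.

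The preliminary step I would carry out is to record the shape of a general element of $\Omega^2_{\CC[[t]]}$. Since $\Omega^1_{\CC[[t]]/\Q}$ is generated as a $\CC[[t]]$-module by the exact forms $dg$ with $g\in\CC[[t]]$, and since $\Omega^2_{\CC[[t]]}=\bigwedge^2_{\CC[[t]]}\Omega^1_{\CC[[t]]}$, every element is a \emph{finite} sum
\[
\gamma=\sum_{i=1}^N f_i\, dg_i\wedge dh_i,\qquad f_i,g_i,h_i\in\CC[[t]],
\]
for some integer $N$: one writes each $1$-form occurring in a wedge as a finite $\CC[[t]]$-linear combination of exact forms and expands bilinearly. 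I would then apply \eqref{cp} to this $\gamma$ and invoke Lemma~\ref{lem1b}, which bounds the weight of each homogeneous piece of the image by $N\binom{p+2}{p}$. Crucially the integer $N$ is fixed by $\gamma$ and does \emph{not} depend on $p$, so comparing the coefficients of $t^p$ in the equality $\tilde\gamma=\tilde\eta$ yields
\[
w(\eta_p)\le N\binom{p+2}{p}=N\,\frac{(p+1)(p+2)}{2}\qquad(p\ge 1).
\]
Dividing by $p^2$ and passing to the $\limsup$ gives $\limsup_{p\to\infty} w(\eta_p)/p^2\le N/2<\infty$, contradicting the hypothesis of the lemma and hence ruling out any lift.

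The argument is short once Lemma~\ref{lem1b} is available, and the step I regard as the crux is precisely the uniform finiteness of the presentation: one must know that $\gamma$ can be written with \emph{a single finite number} $N$ of wedge terms, valid across all degrees $p$ simultaneously. This is exactly what the description of $\Omega^2_{\CC[[t]]}$ as the second exterior power of the module $\Omega^1_{\CC[[t]]}$ guarantees, and it is what upgrades the term-by-term subadditivity of $w$ into a genuinely quadratic bound; the growth hypothesis on $w(\eta_p)$ is then seen to be incompatible with lifting.
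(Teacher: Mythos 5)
Your proof is correct and is exactly the argument the paper intends: the paper's own proof reads ``Immediate from Lemma~\ref{lem1b},'' and your write-up supplies precisely the implicit details, namely that any lift $\gamma\in\Omega^2_{\CC[[t]]}$ is a \emph{finite} sum $\sum_{i=1}^N f_i\,dg_i\wedge dh_i$, so Lemma~\ref{lem1b} forces $w(\eta_p)\le N\binom{p+2}{p}=O(p^2)$ uniformly, contradicting the $\limsup$ hypothesis.
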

\begin{proof}Immediate from Lemma~\ref{lem1b}. 
\end{proof}

\begin{rmk}
All the above lemmas of this section immediately generalize to differential forms of any
even degree.
\end{rmk}

\smallskip

 Let $Y/\Q$ be a smooth projective variety. We 
write $Y_A$ for the base change by a ring $A/\Q$.

\begin{prop} \label{prop:ex} Assume that $p_g= \dim_{\Q} H^0(Y,\Omega^{\dim Y}_Y)>0$ and
  that the dimension of $Y$ is even.
Then the map  
 \ga{}{K_0(Y_{R})_\Q \to (\varprojlim_n K_0(Y\times_{\CC}  R_n))\otimes \Q \notag}
  is not surjective.
\end{prop}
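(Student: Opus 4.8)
The plan is to detect non-algebraicity through the weight growth of absolute differential forms controlled by Lemmas~\ref{lem1d}--\ref{lem3c}. Write $N=\dim Y$, $X=Y_\CC$ and $X_n = X\times_\CC R_n$, so $X_1=X$. By the Chern character pro-isomorphism of Theorem~\ref{com.mainthm} it is enough to work with one graded piece, the top Chow group $\CH^N$; and, using the analogue of Claim~\ref{chow.claim1} to pass between $(\varprojlim_n\CH^N(X_n))_\Q$ and $\varprojlim_n\CH^N(X_n)_\Q$, it suffices to produce a pro-class in $\varprojlim_n\CH^N(X_n)_\Q$ that does not lie in the image of $\CH^N(Y_R)_\Q$. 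First I would build the detecting invariant. Composing the cycle class $d\log\colon \sKM_{N,X_n}\to Z^N_{X_n}\subset\Omega^N_{X_n}$ with the projection of absolute $N$-forms onto the graded piece $\sO_{X_1}\otimes_\CC\Omega^N_\CC\otimes_\CC R_n$ in which all $N$ differentials point in the $\CC$-direction (the top step $\gr^N_L$ of the weight filtration $L^\bullet$ used in the proof of Proposition~\ref{tr.vanlem}), and taking $H^N(X_1,-)$, gives compatible maps
\[
\Theta_n\colon \CH^N(X_n)_\Q \longrightarrow H^N(X_1,\sO_{X_1})\otimes_\CC\Omega^N_\CC\otimes_\CC R_n .
\]
Since $N$ is even and $p_g>0$, Serre duality gives $H^N(X_1,\sO_{X_1})\cong H^0(X_1,\Omega^N_{X_1/\CC})^{\ast}\neq 0$; fixing a nonzero functional on a line in this space I obtain in the limit a homomorphism $\Theta\colon\varprojlim_n\CH^N(X_n)_\Q\to\Omega^N_\CC\widehat\otimes R$ into the completed module occurring in~\eqref{cp}.

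The comparison step is to show that a class coming from $\CH^N(Y_R)_\Q$ has $\Theta$-image in the image of~\eqref{cp}. For a genuinely algebraic class on the regular scheme $Y_R$ the $d\log$-form lies in honest absolute forms $\Omega^N_{Y_R/\Q}$, whose component in the $\CC$- and $t$-directions is an honest element of $\Omega^N_R$ rather than of its completion; under the splitting~\eqref{sp} this is precisely the source of~\eqref{cp}. Consequently, by the even-degree generalization of Lemma~\ref{lem1b}, the image $\Theta(\xi)=\sum_p t^p\eta_p$ of any $\xi\in\CH^N(Y_R)_\Q$ satisfies a polynomial weight bound $w(\eta_p)=O(p^N)$.

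It then remains to exhibit a pro-class whose invariant violates this bound. By the exact sequence~\eqref{sec1.shortex} the kernel of $\CH^N(X_n)_\Q\to\CH^N(X_{n-1})_\Q$ is the image of $b\colon\Omega^{N-1}_{X_1}\to\sKM_{N,X_n}$, and by the commuting square~\eqref{sec1.shortex} the $d\log$ of $b(\beta)$ is the closed form $a(\beta)=t^{n-1}d\beta+(n-1)t^{n-2}dt\wedge\beta$. Taking $\beta=f\otimes\omega_n$ with $f$ representing a nonzero class in $H^N(X_1,\sO_{X_1})$ and $\omega_n\in\Omega^{N-1}_\CC$, the increment contributed to $\eta_{n-1}$ has leading term $\langle \bar f\rangle\, d\omega_n\in\Omega^N_\CC$. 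Choosing $\omega_n=\sum_{l} a_l\,db_{l,1}\wedge\cdots\wedge db_{l,N-1}$ with all the complex numbers $a_l,b_{l,j}$ algebraically independent, the even-degree version of Lemma~\ref{lem1d} gives $w(d\omega_n)$ equal to the number of summands, which I take to grow faster than any polynomial in $n$ (say $\geq 2^n$). Assembling the pro-class step by step — adding $b(\beta_n)$ at level $n$, which vanishes on $X_{n-1}$ and hence does not disturb the lower stages — produces $\hat\xi\in\varprojlim_n\CH^N(X_n)_\Q$ with $\limsup_p w(\eta_p)/p^N=\infty$. By the even-degree generalization of Lemma~\ref{lem3c}, $\Theta(\hat\xi)$ does not lift through~\eqref{cp}, so by the comparison step $\hat\xi$ is not in the image of $\CH^N(Y_R)_\Q$, which proves the asserted non-surjectivity.

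I expect the hardest part to be the comparison step: rigorously identifying the invariant $\Theta$ with the formal-forms picture, that is, checking that the top weight-graded piece of the absolute de Rham cycle class is compatible with the pro-system and with the $R$-module structure so that its target is exactly $\Omega^N_\CC\widehat\otimes R$, and that algebraic classes land in the image of~\eqref{cp}. Keeping track simultaneously of $d\log$, the weight filtration $L^\bullet$, and the connecting map $a$ of~\eqref{sec1.shortex} is where the genuine work lies; once this is in place, the weight-growth construction and the final contradiction follow formally from Lemmas~\ref{lem1d}--\ref{lem3c}.
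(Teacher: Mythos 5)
Your overall strategy --- detect non-algebraicity by the weight function $w$ on $\Omega^{N}_{\CC}$ via Lemmas~\ref{lem1d}, \ref{lem1b}, \ref{lem3c}, compare against the polynomial bound satisfied by classes coming from $Y_R$, and exhibit a pro-class violating the bound --- is exactly the paper's detection mechanism, but your construction of the violating pro-class is genuinely different. The paper stays entirely inside $K_0$: it takes differences of point classes $[x]-[y]$ for $R_{n+1}$-points of $Y$ through a generic $z\in Y(\overline \Q)$, computes $\overline{\rm ch}_2([x]-[y])=\rho\otimes t^n da\wedge db$ by Grothendieck--Riemann--Roch and an explicit \v{C}ech cocycle (Lemma~\ref{lem4}, Claims~\ref{Kth}, \ref{appb.formula}), and never invokes the machinery of the main body. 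You instead work in $\varprojlim_n \CH^N(X_n)_\Q$, produce increments via the map $b$ of the sequence \eqref{sec1.shortex} applied to classes $f\otimes\omega_n\in H^N(X_1,\sO_{X_1})\otimes\Omega^{N-1}_\CC$, and reduce to $K_0$ through Theorem~\ref{com.mainthm} and an analogue of Claim~\ref{chow.claim1}. What the paper's route buys is self-containedness and, crucially, that each building block $[x]-[y]$ is the restriction of a class in $K_0(Y_R)$; what your route buys is transparency about where the increments live (the kernel of $\CH^N(X_n)_\Q\to\CH^N(X_{n-1})_\Q$ is exactly $\mathrm{im}(b_*)$, and $d\log\circ b=a$ makes the contribution to the invariant immediate), at the cost of the extra reduction steps.

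There are two concrete points where your write-up has a gap, both fixable. First, assembling $\hat\xi$ ``step by step'' requires at each stage that the already-constructed class $\hat\xi_{n-1}$ \emph{lifts} to $\CH^N(X_n)_\Q$; adding $b_*(\beta_n)$ indeed does not disturb lower stages, but you never argue the lift exists. This is precisely the problem the paper's algebraic point classes solve for free (they extend to $R$-points, hence to all levels). In your setting the lift does exist, but for a reason you should state: since $N=\dim X_1$ and $X_1$ is a Noetherian space of dimension $N$, Grothendieck vanishing gives $H^{N+1}(X_1,\Omega^{N-1}_{X_1})=0$, so by the long exact sequence of \eqref{sec1.shortex} the transition maps $\CH^N(X_n)_\Q\to\CH^N(X_{n-1})_\Q$ are surjective. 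Second, once you lift, the lift contributes an \emph{uncontrolled} term $\epsilon_{n-1}\in\Omega^N_\CC$ to the coefficient $\eta_{n-1}$ of $\Theta(\hat\xi)$; with your forms $\omega_n$ fixed in advance (``$\geq 2^n$ summands''), nothing prevents $\epsilon_{n-1}$ from cancelling the weight of $d\omega_n$. The fix is to choose $\omega_n$ \emph{adaptively}: first pick the lift, read off $\epsilon_{n-1}$ (which has finite weight), then choose $\omega_n$ with $w(d\omega_n)\ge 2^n+w(\epsilon_{n-1})$, so that subadditivity of $w$ gives $w(\eta_{n-1})\ge w(d\omega_n)-w(\epsilon_{n-1})\ge 2^n$. (To be fair, the paper's own proof of Proposition~\ref{prop:ex} elides the same issue --- the restrictions of its algebraic classes to higher levels also produce correction terms at higher orders of $t$ --- and the same adaptive choice repairs it there.) With these two repairs, and granting the reduction through Theorem~\ref{com.mainthm} and the functoriality of the Chern character needed to pass between $K_0(Y_R)_\Q$ and $\CH^N(Y_R)_\Q$, your argument goes through.
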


For simplicity of notation we restrict to $\dim Y=2$ for the rest of this section. The
proof of the general case of Proposition~\ref{prop:ex} works exactly the same way.
For $A$ a ring over $\overline\Q$, we have a second Chern character in abolute Hodge cohomology 
\eq{}{{\rm ch}_2: K_0(Y_A) \to H^2(Y_A, \Omega^2_{Y_A}). \notag
}
Using the K\"unneth decomposition for differential forms and the resulting projection
\ga{}{H^2(Y, \Omega^2_{Y_A})\to H^2(Y, \sO_Y)\otimes \Omega^2_A \notag}
one defines by composition
\eq{}{\widetilde {\rm  ch}_2:  K_0(Y_A) \to H^2(Y_A, \Omega^2_{Y_A})\to H^2(Y,\sO_Y)\otimes \Omega^2_A .
}
 Taking above $A$ to be $R_n$ and composing with the projection in \eqref{sp}, one obtains  the homomorphism
\eq{4b}{\overline {\rm ch}_2: K_0(Y_{R_n}) \to   H^2(Y, \sO_Y)\otimes_{\overline \Q} \Omega^2_\CC\otimes_{\CC} R_n .
}

\smallskip

For the following discussion we choose a point $z \in Y(\overline\Q)$ and generators $t_1,t_2$
of the maximal ideal of $\sO_{Y,z}$. Write $X=\Spec \sO_{Y,z}$ and $U=X\setminus z$.

This choice gives rise to an element $\rho \in H^2(Y,\sO_{Y})$ by the following
construction. In fact for later reference we explain the construction after performing a base change to $\CC$. Consider the covering $\mathcal U=(U_i)_{i=1,2}$ of $U_\CC$ with $U_i=X_\CC
\setminus V(t_i)$. Now $\rho$ is the image of the \v{C}ech cocycle 
$\frac{1}{t_1 t_2} \in \check{H}^1(\mathcal U, \sO_{U_\CC})$ under the
composite map
\begin{align}\label{appb.cech}
H_n^r : \check{H}^1(\mathcal U, \Omega^r_{U_{R_n}}) \to H^1(U_\CC, \Omega^r_{U_{R_n}}) \to H^2_z(X_\CC,
\Omega^r_{X_{R_n}}) \xleftarrow{\sim}\\\notag 
H^2_z(Y_\CC, \Omega^r_{Y_{R_n}}) \to H^2(Y_\CC, \Omega^r_{Y_{R_n}}) 
\end{align}
for $n=1$ and $r=0$.

\begin{lem}\label{appb.gen}
Assume $p_g={\rm dim}_{\bar \Q} H^0(Y, \Omega^2)> 0$. Then a generic choice of $z$
gives rise to non-vanishing $\rho\in H^2(Y,\sO_{Y})$.
\end{lem}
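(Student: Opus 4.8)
The plan is to compute the class $\rho$ explicitly using local duality and Serre duality, and to show that under the nondegeneracy hypothesis $p_g > 0$ a generic choice of point $z$ forces $\rho$ to be nonzero. First I would recall the local structure: since $t_1, t_2$ generate the maximal ideal of the regular local ring $\sO_{Y,z}$, the \v{C}ech cocycle $\frac{1}{t_1 t_2}$ generates the local cohomology module $H^2_z(X, \sO_X)$, which by Grothendieck local duality is dual to the completed stalk of the dualizing sheaf $\Omega^2_{Y}$ at $z$. Thus the map $H^2_z(Y, \sO_Y) \to H^2(Y, \sO_Y)$ appearing in \eqref{appb.cech} fits into the coniveau (local-to-global) picture, and $\rho$ is precisely the image of the generator of the punctual local cohomology at $z$.

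The key step is to pair $\rho$ against $H^0(Y, \Omega^2_Y)$ via Serre duality. Concretely, Serre duality gives a perfect pairing
\eq{}{
H^2(Y, \sO_Y) \times H^0(Y, \Omega^2_Y) \to \CC,
}
and I would show that the pairing of $\rho$ with a global two-form $\omega \in H^0(Y, \Omega^2_Y)$ computes, up to a nonzero constant, the \emph{residue} of $\omega$ against the local cohomology generator $\frac{1}{t_1 t_2}$ at $z$, i.e.\ the value $\mathrm{res}_z\!\left(\frac{\omega}{t_1 t_2}\right)$. Writing $\omega = u \, dt_1 \wedge dt_2$ near $z$ with $u \in \sO_{Y,z}$, this residue is simply the value $u(z)$, the evaluation of $\omega$ at $z$ in the chosen local coordinates. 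Hence $\rho$ is nonzero if and only if there exists some $\omega \in H^0(Y, \Omega^2_Y)$ not vanishing at $z$ in this sense.

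To conclude, I would argue that the hypothesis $p_g > 0$ guarantees a nonzero global two-form $\omega$, and that for a generic choice of $z \in Y(\overline\Q)$ the form $\omega$ does not vanish at $z$: the vanishing locus of a fixed nonzero $\omega$ is a proper closed subscheme of $Y$, so its complement contains a $\overline\Q$-point by density of $\overline\Q$-points on a variety over $\overline\Q$ and the fact that a nonempty open subset of a positive-dimensional variety has such points. For such $z$ the residue pairing is nonzero, whence $\rho \neq 0$. The main obstacle I anticipate is making the identification of the pairing $\langle \rho, \omega\rangle$ with the local residue fully precise — that is, tracking the chain of maps in \eqref{appb.cech} through the coniveau spectral sequence and matching the \v{C}ech representative $\frac{1}{t_1 t_2}$ with the local-duality generator compatibly with Serre duality, including the correct sign and normalization constants. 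Once this compatibility is established, the genericity statement is routine.
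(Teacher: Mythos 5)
Your proposal is correct and takes essentially the same route as the paper: its proof likewise chooses a nonzero $\omega \in H^0(Y,\Omega^2_Y)$ and invokes duality theory to see that $\omega \cup \rho \in H^2(Y,\Omega^2_Y)\cong \overline{\Q}$ is nonzero whenever $\omega$ does not vanish at the (generically chosen) point $z$. Your write-up simply makes explicit the residue computation $\langle \rho,\omega\rangle = \mathrm{res}_z\!\left(\omega/(t_1t_2)\right)=u(z)$ that the paper leaves to its citation of Hartshorne's \emph{Residues and Duality}.
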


\begin{proof}
Choose a non-vanishing $\omega \in H^0(Y,\Omega^2_Y)$. Duality theory \cite{Ha} shows that
$\omega \cup \rho \in H^2(Y,\Omega^2_{Y})\cong  \overline \Q$ does not vanish if $\omega$ does not
vanish at $z$.  
\end{proof}

\begin{lem}\label{lem4}  For $n\ge 1$, the image of 
\ml{5b}{\ker \big(K_0(Y_{R_{n+1}}) \to K_0(Y_{R_n})\big) \cap \im \big( K_0(Y_{R}) \to  K_0(Y_{R_{n+1}})
   \big)  \xrightarrow{\overline{ch}_2}\\
  H^2(Y, \sO_Y)\otimes_{\overline \Q} \Omega^2_\CC\otimes_{\CC} R_{n+1} }
contains any element of the form $\rho \otimes t^{n}(da_1\wedge db_1 + \cdots + da_p\wedge
db_p)$ with $a_i,b_i\in\CC$.
\end{lem}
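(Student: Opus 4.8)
The plan is to produce the required class at the level of relative Milnor symbols, using the sequence \eqref{sec1.shortex}, and then to transport it into $K$-theory. First I would reduce, by additivity of $\overline{\mathrm{ch}}_2$ and of the construction below, to the case $p=1$: it suffices to build a single $\xi\in K_0(Y_R)$ which restricts to $0$ over $R_n$ and satisfies $\overline{\mathrm{ch}}_2(\xi|_{R_{n+1}})=\rho\otimes t^n\,da\wedge db$, and then to sum the $p$ classes attached to the pairs $(a_i,b_i)$.

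The key object is $\beta=\rho\otimes a\,db$, viewed inside $H^2(Y,\Omega^1_{X_1})$ via $H^2(Y,\sO_Y)\otimes_\CC\Omega^1_{\CC/\Q}\to H^2(Y,\Omega^1_{X_1})$, where $X_1=Y_\CC$ and $\rho=[\tfrac{1}{t_1t_2}]$ is the class of the \v{C}ech cocycle used in \eqref{appb.cech}. Applying the symbol map $b$ of Lemma~\ref{lem2} (for $r=2$ and the thickening $Y_{R_{n+1}}$) gives a class $\gamma_{n+1}:=b_*(\beta)\in H^2(Y,\sKM_{2,Y_{R_{n+1}}})=\CH^2(Y_{R_{n+1}})$; concretely $\beta=(\rho\,ab)\,d\log b$, so $\gamma_{n+1}$ is represented near $z$ by the symbol $\{1+\tfrac{ab\,t^n}{t_1t_2},\,b\}$. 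Since the image of $b_*$ is precisely the kernel of the restriction $\CH^2(Y_{R_{n+1}})\to\CH^2(Y_{R_n})$ coming from \eqref{sec1.shortex}, the class $\gamma_{n+1}$ dies modulo $t^n$. Moreover the symbol datum is independent of the deformation parameter $t$ (it is built from $t_1,t_2,a,b$ only), so the \emph{same} symbol defines a lift $\gamma\in\CH^2(Y_R)$ of $\gamma_{n+1}$, and $\gamma|_{R_n}=0$ because $1+\tfrac{ab\,t^n}{t_1t_2}\equiv 1$ there. Finally I lift $\gamma$ to $K_0(Y_R)$: rationally this is the Chern character isomorphism $K_0(Y_R)_\Q\xrightarrow{\sim}\bigoplus_i\CH^i(Y_R)_\Q$ of Section~\ref{motcom} (equivalently Proposition~\ref{com.main}), which suffices for Proposition~\ref{prop:ex}; an integral lift is afforded by the product $\xi=([L]-1)\cdot([M]-1)$ of the two line bundles underlying the symbol, with $L$ trivial modulo $t^n$, for which $\xi|_{R_n}=0$ and $\mathrm{ch}_2(\xi)=c_1(L)\,c_1(M)=\gamma_{n+1}$. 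In either case $\xi$ lies in $\mathrm{im}(K_0(Y_R))$ and $\xi|_{R_{n+1}}\in\ker(K_0(Y_{R_{n+1}})\to K_0(Y_{R_n}))$.

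It then remains to compute $\overline{\mathrm{ch}}_2(\xi|_{R_{n+1}})$. Since the only nonzero Chern component of $\xi$ is $\gamma_{n+1}=b_*(\beta)$ in degree $2$, its de Rham realization is $d\log$ of the symbol, and by the commutativity of \eqref{sec1.shortex} with $d\log$ this equals $a_*(\beta)$, where $a(\eta)=t^n\,d\eta+n\,t^{n-1}dt\wedge\eta$ is the map of Lemma~\ref{lem1}. Writing $\beta$ locally as $c\,a\,db$ with $c=\tfrac{1}{t_1t_2}$, one has $d\beta=dc\wedge a\,db+c\,da\wedge db$. Now I project onto the K\"unneth summand $H^2(Y,\sO_Y)\otimes\Omega^2_\CC$ defining $\widetilde{\mathrm{ch}}_2$ and $\overline{\mathrm{ch}}_2$ in \eqref{4b}, and onto the $\Omega^2_\CC\otimes R$–factor via the splitting \eqref{sp}: the term $n\,t^{n-1}dt\wedge\beta$ carries a $dt$ and is annihilated by \eqref{sp}; the term $dc\wedge a\,db$ carries a relative differential $dt_i$ in the $Y$–direction and is annihilated by the K\"unneth projection; only $c\,da\wedge db$ survives, and its class is exactly $\rho\otimes t^n\,da\wedge db$. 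Summing over $i$ yields the general element $\rho\otimes t^n(\sum_i da_i\wedge db_i)$.

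The main obstacle is the bookkeeping in the last paragraph: one must rigorously identify the $K$-theoretic map $\overline{\mathrm{ch}}_2$ of \eqref{4b} with the symbol map $b$ and its de Rham realization $a$, routed through the description of $\rho$ in \eqref{appb.cech}, and then verify that the two projections (K\"unneth onto $\sO_Y$ in the $Y$–directions and \eqref{sp} onto $\Omega^\bullet_\CC$ in the coefficient direction) kill every term except $c\,da\wedge db$. The only further point needing care is the passage from the Chow/symbol level to an honest $K_0$–class; working with $\Q$–coefficients, as in Proposition~\ref{prop:ex}, removes this difficulty, while the line-bundle product gives an integral refinement.
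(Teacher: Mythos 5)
Your final paragraph (the identification of $\overline{\rm ch}_2$ of the symbol class with $d\log$ of $b_*(\beta)$ and the check that the K\"unneth projection and the splitting \eqref{sp} kill every term except $c\,da\wedge db$) is sound, and it is essentially the same computation as the paper's Claim~\ref{appb.formula}; the kernel condition at each finite level via \eqref{sec1.shortex} is also fine. The genuine gap is the lifting step, which is the entire point of the lemma: your claim that ``the same symbol defines a lift $\gamma\in\CH^2(Y_R)$.'' Over $R_{n+1}$ the entry $1+ab\,t^{n}/(t_1t_2)$ is a unit on $\{t_1t_2\neq 0\}$ only because $t$ is nilpotent there. Over $R=\CC[[t]]$ it is \emph{not} a unit on the open subscheme $\{t_1t_2\neq 0\}\subset X_R$: its inverse would have to be $\sum_{k\ge 0}(-ab)^{k}t^{nk}(t_1t_2)^{-k}$, whose coefficients have unbounded pole order along $t_1t_2=0$, and such a series does not lie in $\sO(U_1\cap U_2)\otimes_{\Q}R$ (an ordinary, not completed, tensor product). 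So the symbol is undefined over $R$ and no class $\gamma$ is produced. (Two secondary problems: the parenthetical ``independent of the deformation parameter $t$'' is false, since the symbol visibly contains $t^{n}$; and the integral refinement $([L]-1)([M]-1)$ does not parse --- the entry $b$ is a constant, so its ``line bundle'' is trivial, and the symbol is only a \v{C}ech $1$-cocycle near $z$, not a product of global first Chern classes. Also the Chern character isomorphism you cite from Section~\ref{motcom} is a pro-isomorphism over the Artinian quotients $R_n$, not a statement about $Y_R$.)

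Moreover the failure is structural, not a fixable technicality. Any class built by your route over $R$ would come from the local cohomology $H^2_{z_R}(X_R,\sKM_{2})$ supported on the constant section $z_R=\{t_1=t_2=0\}$. Since $X_R$ is regular, the Gersten resolution \cite{Ke} identifies this group with $\Z\cdot[z_R]$, and $\overline{\rm ch}_2$ of any multiple of $[z_R]|_{R_{n+1}}$ vanishes, because its $d\log$ representative $dt_1\wedge dt_2/(t_1t_2)$ is a $Y$-direction form killed by the K\"unneth projection in \eqref{4b}. Hence no class supported on the constant section can hit $\rho\otimes t^{n}da\wedge db$; the enormous difference between $H^2_z$ of the non-reduced scheme $X_{R_{n+1}}$ (which contains all your symbols) and $H^2_{z_R}$ of the regular scheme $X_R$ (which is just $\Z$) is precisely the engine of the whole counterexample. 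This is what the paper's proof is designed around: it realizes the class as $[x]-[y]$ for two $R_{n+1}$-points $x\colon (t_1,t_2)\mapsto(at^{n-1},bt)$ and $y\colon (t_1,t_2)\mapsto(0,bt)$ of $Y$ through $z$, which manifestly extend to $R$-points, so membership in ${\rm im}\,(K_0(Y_R)\to K_0(Y_{R_{n+1}}))$ is automatic; $\overline{\rm ch}_2([x]-[y])$ is then computed by Grothendieck--Riemann--Roch (Claim~\ref{Kth}) and reproduces your cocycle. Over $R$ the supports of these lifts are horizontal curves that only collapse into the thickened point after reduction mod $t^{n+1}$; that the support must move in this way is the idea your argument is missing.
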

\begin{proof}
It  suffices to show that any element of the form $\rho\otimes t^{n}da\wedge db$ lies in
the image of the map \eqref{5b}.

Accoring to Grothendieck--Riemann--Roch \cite[Ex.~15.2.15]{Fu84}  one has:
\begin{claim} \label{Kth} \mbox{}
\begin{itemize}
\item[(i)]
For any $R_{n+1}$-point $x\in Y(R_{n+1})$ there is a canonical pushforward $\Z = K_0(R_{n+1})\to
K_0(Y_{R_{n+1}})$. We denote the image of $1$ under this map by $[x]$.
\item[(ii)]
Assume that $x$ as in (i) lifts the point $z$. Then
${\rm ch}_2([x])$ is equal to $$H_{n+1}^2(d\log (t_1-x^*(t_1)) \wedge d\log (t_2-x^*(t_2))
),$$ where the
map $H^2_{n+1}$ is as defined in~\eqref{appb.cech}.
\end{itemize}
\end{claim}

For $a,b\in \CC$ we consider  two $R_{n+1}$-points $x,y \in Y(R_{n+1})$ specializing to $z$ which are
defined by
 \ga{}{x: t_1 \mapsto at^{n-1},\ t_2 \mapsto bt; \\
 y: t_1 \mapsto 0;\ t_2 \mapsto bt.
}
Observe that the pullbacks of $[x], [y]$ to $K_0(X_{R_n})$ coincide. It is
clear that $x$ and $y$ extend to $R$-points of $Y$. So $[x]-[y]$ lies in the group on the
left side of \eqref{5b}. The following claim  shows that $$\overline{\rm ch}_2([x]-[y]) =
\rho\otimes t^{n}da\wedge db$$
finishing the proof of Lemma~\ref{lem4}. 
\end{proof}

\begin{claim} \label{appb.formula}
\ga{appb.f1}{\overline{{\rm ch}}_2([x]) = \rho \otimes t^{n}da\wedge db  \\
\overline{{\rm ch}}_2([y]) =  0. }
\end{claim}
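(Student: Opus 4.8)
The plan is to read both Chern characters off the Grothendieck--Riemann--Roch formula recorded in Claim~\ref{Kth}(ii), namely
\[
{\rm ch}_2([x]) = H^2_{n+1}\big(d\log(t_1 - x^*(t_1)) \wedge d\log(t_2 - x^*(t_2))\big),
\]
and then push the resulting \v{C}ech cocycle through the two operations that carve $\overline{\rm ch}_2$ out of ${\rm ch}_2$: the K\"unneth projection $H^2(Y,\Omega^2_{Y_{R_{n+1}}})\to H^2(Y,\sO_Y)\otimes\Omega^2_{R_{n+1}}$ defining $\widetilde{\rm ch}_2$, and the splitting \eqref{sp}. Since $Y_{R_{n+1}} = Y\times_\Q \Spec R_{n+1}$ is a product, the decomposition $\Omega^2_{Y_{R_{n+1}}} = (\Omega^2_Y\otimes\sO)\oplus(\Omega^1_Y\otimes\Omega^1_{R_{n+1}})\oplus(\sO_Y\otimes\Omega^2_{R_{n+1}})$ is $\sO_{Y_{R_{n+1}}}$-linear, so the K\"unneth projection may be applied termwise to the explicit cocycle; likewise \eqref{sp} simply discards every monomial containing $dt$.

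For $[x]$ I would first expand, writing $f = t_1 - at^{n-1}$ and $g = t_2 - bt$, so that
\[
df = dt_1 - t^{n-1}\,da - a(n-1)t^{n-2}\,dt, \qquad dg = dt_2 - t\,db - b\,dt,
\]
and $d\log f\wedge d\log g = (fg)^{-1}\,df\wedge dg$. Projecting $df\wedge dg$ onto the $\sO_Y\otimes\Omega^2_{R_{n+1}}$ summand, i.e.\ discarding every term containing $dt_1$ or $dt_2$, leaves
\[
t^n\,da\wedge db + b\,t^{n-1}\,da\wedge dt + a(n-1)t^{n-1}\,dt\wedge db.
\]
Applying \eqref{sp} removes the last two terms and leaves $t^n\,da\wedge db$. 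It then remains to identify the scalar coefficient: since $t^{n+1}=0$ in $R_{n+1}$ and $f\equiv t_1$, $g\equiv t_2\pmod t$, one has $t^n(fg)^{-1} = t^n(t_1t_2)^{-1}$, so the cocycle becomes $(t_1t_2)^{-1}\cdot(t^n\,da\wedge db)$. Here $t^n\,da\wedge db$ is pulled back from $\Spec R_{n+1}$, hence constant in the $Y$-direction, and multiplication by $t^n$ annihilates the ideal $(t)$; so by $\sO$-linearity of $H^2_{n+1}$ (a projection-formula argument) the coefficient contributes only through its reduction modulo $t$, which is exactly the $n=1$, $r=0$ cocycle $(t_1t_2)^{-1}$ defining $\rho$. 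This yields $\overline{\rm ch}_2([x]) = \rho\otimes t^n\,da\wedge db$.

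For $[y]$ the computation is shorter. Here $x^*(t_1)=0$ and $x^*(t_2)=bt$, so
\[
d\log(t_1)\wedge d\log(t_2-bt) = \big(t_1(t_2-bt)\big)^{-1}\,dt_1\wedge(dt_2 - t\,db - b\,dt).
\]
Every term of $dt_1\wedge(\cdots)$ carries the $Y$-direction factor $dt_1$, so the K\"unneth projection onto $\sO_Y\otimes\Omega^2_{R_{n+1}}$ annihilates it. Hence $\overline{\rm ch}_2([y]) = 0$.

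The main obstacle will be the bookkeeping in the middle step: justifying that the K\"unneth projection and the splitting \eqref{sp} can be performed on the explicit \v{C}ech representative and are compatible with the composite $H^r_{n+1}$ of \eqref{appb.cech}, and that the resulting class $(t_1t_2)^{-1}\cdot t^n\,da\wedge db$ is genuinely $\rho\otimes t^n\,da\wedge db$ in $H^2(Y,\sO_Y)\otimes_{\overline\Q}\Omega^2_\CC\otimes_\CC R_{n+1}$. The conceptual heart is the truncation $t^n(fg)^{-1}=t^n(t_1t_2)^{-1}$: it is exactly what makes the $Y$-direction dependence of $f$ and $g$ invisible and reconnects the degree-two computation to the $n=1$ class $\rho$.
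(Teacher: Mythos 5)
Your proposal is correct and takes essentially the same route as the paper's own proof: both invoke Claim~\ref{Kth}(ii), write down the \v{C}ech representative $d\log(t_1-x^*(t_1))\wedge d\log(t_2-x^*(t_2))$, apply the K\"unneth projection and the splitting \eqref{sp} to discard every term involving $dt_1$, $dt_2$ or $dt$, and use the nilpotence $t^{n+1}=0$ to replace the denominator $(t_1-x^*(t_1))(t_2-x^*(t_2))$ by $t_1t_2$, so that the class is $\rho\otimes t^n\,da\wedge db$ by the definition of $\rho$. The paper compresses all of this into a single displayed computation (and dismisses $[y]$ with ``works similarly''); your write-up just makes the same steps, including the vanishing for $[y]$ via the unavoidable $dt_1$ factor, explicit.
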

\begin{proof}
We give the proof for $x$, the case of $y$ works similarly. In the \v{C}ech cohomology
group $\check{H}^2(\mathcal U, \sO_U  )\otimes_{\overline \Q} \Omega^2_\CC \otimes_\CC R_{n+1}$ we
have
\begin{align*}
d\log &(t_1-x^*(t_1)) \wedge d\log (t_2-x^*(t_2)) = \frac{d x^*(t_1) \wedge d
  x^*(t_2)}{(t_1-x^*(t_1)) (t_2-x^*(t_2))}\\
&= \frac{t^n da\wedge db}{(t_1-x^*(t_1)) (t_2-x^*(t_2))}
= \frac{t^n da\wedge db}{t_1 t_2}.
\end{align*}
So \eqref{appb.f1} follows from Claim~\ref{Kth}(ii) and the definition of $\rho$. 
\end{proof}
Let $K=\CC((t))$. 
\begin{lem}\label{lem1a} One has $K_0(Y_R) \cong K_0(Y_K)$. 
\end{lem}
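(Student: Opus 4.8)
The plan is to realize the statement as an instance of the localization sequence in $G$-theory for the inclusion of the generic fibre $Y_K \inj Y_R$, whose closed complement is the special fibre $Y_\CC := Y\times_\Q\CC$ (the fibre of $Y_R\to\Spec R$ over the closed point of $\Spec R$). First I would observe that $Y_R = Y\times_\Q\Spec R$ is smooth over the regular ring $R=\CC[[t]]$, hence regular, and that $Y_K$ is smooth over the field $K$, hence regular; since both are quasi-projective (as $Y/\Q$ is projective), the natural comparison maps give
\[
K_0(Y_R)\xrightarrow{\ \sim\ } G_0(Y_R),\qquad K_0(Y_K)\xrightarrow{\ \sim\ } G_0(Y_K).
\]
It therefore suffices to prove that the restriction map $j^*\colon G_0(Y_R)\to G_0(Y_K)$ along the open immersion $j\colon Y_K\inj Y_R$ is an isomorphism.

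Writing $i\colon Y_\CC\inj Y_R$ for the closed immersion, Quillen's localization theorem yields the exact sequence
\[
G_0(Y_\CC)\xrightarrow{\ i_*\ } G_0(Y_R)\xrightarrow{\ j^*\ } G_0(Y_K)\to 0 .
\]
In particular $j^*$ is surjective, so the whole statement reduces to showing that the pushforward $i_*$ is the zero map.

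The vanishing of $i_*$ is where I would use that the deformation is \emph{trivial}, i.e.\ that $R$ is a $\CC$-algebra and hence $Y_R = Y_\CC\times_\CC\Spec R$, which provides a projection $p\colon Y_R\to Y_\CC$ splitting $i$. Because $G_0(Y_\CC)$ is generated by the classes $[\sF]$ of coherent sheaves, it is enough to check $[i_*\sF]=0$ in $G_0(Y_R)$ for each coherent $\sF$ on $Y_\CC$. I would lift $\sF$ to the coherent sheaf $p^*\sF$ on $Y_R$; since $Y_\CC = V(t)$ is the principal Cartier divisor cut out by the global nonzerodivisor $t$, and multiplication by $t$ is injective on $p^*\sF$ (which is $t$-torsion free), there is a two-term resolution by coherent $\sO_{Y_R}$-modules
\[
0\to p^*\sF\xrightarrow{\ \cdot t\ } p^*\sF\to i_*\sF\to 0 .
\]
Hence $[i_*\sF]=[p^*\sF]-[p^*\sF]=0$, so $i_*=0$, and the exact sequence above shows $j^*$ is also injective, proving $K_0(Y_R)\cong K_0(Y_K)$.

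The essential input — and the only place where anything beyond formalities happens — is the vanishing $i_*=0$: for a genuinely nontrivial family the pushforward from the special fibre need not vanish, and here it is forced to vanish precisely by the product structure $Y_R\cong Y_\CC\times_\CC\Spec R$ (supplying the retraction $p$ that lifts sheaves off $Y_\CC$) together with the triviality of the normal bundle of $Y_\CC=V(t)$, which is exactly what makes the two-term resolution exist. The remaining points are routine and I would only verify them in passing: regularity of $Y_R$ (smooth over a regular base), the identification $K_0=G_0$ for these regular quasi-projective schemes, and the $t$-torsion freeness of $p^*\sF$.
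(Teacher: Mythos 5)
Your proof is correct and takes essentially the same route as the paper: both arguments rest on Quillen's localization sequence for $Y_\CC \inj Y_R \hookleftarrow Y_K$, regularity of $Y_R$ to identify $K_0$ with $G_0$, and the product structure $Y_R \cong Y_\CC \times_\CC \Spec R$ to lift sheaves (or classes) off the special fibre. The only difference is one of phrasing: the paper kills the special fibre's contribution by showing the boundary map $\partial\colon K_1(Y_K)\to K_0(Y_\CC)$ is surjective via the formula $x=\partial(x_R\cdot[t])$, while you prove the equivalent statement $i_*=0$ directly from the resolution $0\to p^*\sF\xrightarrow{t}p^*\sF\to i_*\sF\to 0$ --- by exactness of the localization sequence these are the same fact, established by the same underlying mechanism.
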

\begin{proof}The boundary $\partial:K_1(Y_K) \to K_0(Y_k)$ is surjective: as $Y_R$ admits a morphism $Y_R\to Y_k$,  one applies the formula $x=\partial(x_R\cdot [t])$ where $x_R \in K_0(Y_R)$ is the pullback of $ x \in K_0(Y_k)$ via the projection $Y_R \to X_k$ and  $[t] \in K_1(K)$ is the class of the unit $t\in K^\times$. 
 It follows that $K_0(Y_R) \inj K_0(Y_K)$. For surjectivity, it suffices to note that a coherent sheaf on $Y_K$ can be extended to a coherent sheaf on $Y_R$, and, as $Y_R$ is regular, it can be resolved by locally free sheaves. 
\end{proof}

\begin{proof}[Proof of Proposition~\ref{prop:ex}] Recall that for simplicity of notation
  we assume $\dim Y=2$.   The diagram
\eq{13}{\begin{CD} K_0(Y_R) @>(3) >> \varprojlim_n K_0(Y\times_{\CC}  R_n) \\
@VV\widetilde{{\rm ch}}_2 V @VV\overline{{\rm ch}}_2 V \\
H^2(Y,\sO_Y) \otimes_{\overline \Q} \Omega^2_R @>(1) >> H^2(Y,\sO_Y) \otimes_{\overline \Q}  \Omega^2_\CC\widehat\otimes R
\end{CD}
}
commutes. By Lemma~\ref{lem4}, the image of $\overline{{\rm ch}}_2$ contains all elements
of the form $\rho \otimes \sum_{n=1}^\infty t^n \tau_n$ where $\tau_n = \sum_{i=1}^{p(n)}
da_i^{(n)}\wedge db_i^{(n)}$. Here all the $a_i^{(n)}$ and $b_i^{(n)}$ are chosen
algebraically independent, and we choose a sequence $\{p(n)\}$ such that $\limsup_n
\frac{p(n)}{n}^2 = \infty$. It follows from Lemma~\ref{lem3c} that $\rho \otimes (
\sum_{n=1}^\infty t^n \tau_n)$ does not lie in the image of $(1)$ if $\rho\ne 0$, so the
map labeled $(3)$ cannot be surjective in this case. Note that by Lemma~\ref{appb.gen} a
generic choice of the point $z\in Y(\overline \Q)$ gives rise to non-vanishing $\rho$.
\end{proof}

\begin{rmk}
Of course there are also odd-dimensional varieties $X$, for which algebraization fails. Take
for example  $X=Y\times_{\overline \Q} \P^1$ with $Y$ as in Proposition~\ref{prop:ex}. In
fact any smooth projective $X/\overline \Q$ which maps surjectively onto such a $Y$
does not satisfy algebraization in the sense of  Proposition~\ref{prop:ex}.
\end{rmk}

\end{appendix}

\bigskip

\bibliographystyle{plain}

\renewcommand\refname{References}

\end{document}